\newtheorem{theorem}{Theorem}
\newtheorem{definition}[theorem]{Definition}
\newtheorem{lemma}{Lemma}[section]
\newtheorem{corollary}[theorem]{Corollary}
\newtheorem{observation}[theorem]{Observation}
\newtheorem{proposition}[theorem]{Proposition}
\newtheorem{conjecture}[theorem]{Conjecture}
\newtheorem{problem}[theorem]{Problem}
\newcommand{\revvec}[1]{\accentset{\leftarrow}{#1}}
\newcommand{\bivec}[1]{\accentset{\leftrightarrow}{#1}}
\author
{
Lior Gishboliner \thanks{School of Mathematical Sciences, Sackler Faculty of Exact Sciences, Tel-Aviv University, Israel, email: \texttt{liorgis1@mail.tau.ac.il}.  Research supported by ERC Starting Grant 633509.}
\and
Raphael Steiner \thanks{Institute of Mathematics, Technische Universit\"at Berlin, Germany, email: \texttt{steiner@math.tu-berlin.de}.
Funded by DFG-GRK 2434 Facets of Complexity.}
\and
Tibor Szab\'{o} \thanks{Institute of Mathematics, Freie Universit\"{a}t Berlin, Germany, email: \texttt{szabo@math.fu-berlin.de}. 
Research supported in part by GIF grant No. G-1347-304.6/2016 and by the Deutsche Forschungsgemeinschaft (DFG, German Research Foundation) under Germany's Excellence Strategy - The Berlin Mathematics Research Center MATH+ (EXC-2046/1, project ID: 390685689).}
}
\date{\today}
\title{Dichromatic number and forced subdivisions}
\begin{document}
\maketitle


\begin{abstract}
	We investigate bounds on the dichromatic number of digraphs which avoid a fixed digraph as a topological minor. For a digraph $F$, denote by $\text{mader}_{\vec{\chi}}(F)$ the smallest integer $k$ such that every $k$-dichromatic digraph contains a subdivision of $F$.
	As our first main result, we prove that if $F$ is an orientation of a cycle then $\text{mader}_{\vec{\chi}}(F)=v(F)$. This settles a conjecture of Aboulker, Cohen, Havet, Lochet, Moura and Thomass\'{e}. We also extend this result to the more general class of orientations of cactus graphs, and to bioriented forests. 
	
	Our second main result is that $\text{mader}_{\vec{\chi}}(F)=4$ for every tournament $F$ of order $4$. This is an extension of the classical result by Dirac that $4$-chromatic graphs contain a $K_4$-subdivision to directed graphs.
\end{abstract}

\section{Introduction}
The chromatic number is one of the fundamental graph parameters, and is well-known to be intractable. Meaningful sufficient and necessary conditions for it to be large are of high interest. 
In fact, some of the most important results and open problems of Graph Theory are concerned with the relation between the chromatic number of an undirected graph and its containment of substructures such as subgraphs, minors or topological minors.
A prime example is the famous Hadwiger conjecture from 1943, which states the following:
\begin{conjecture}[\cite{hadwiger}]\label{hadwiger}
Every graph $G$ with $\chi(G) \ge k$ contains $K_k$ as a minor.
\end{conjecture}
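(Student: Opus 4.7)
The plan is to attempt a proof by induction on $k$. The base cases $k \le 4$ are classical: $k=1,2,3$ are immediate, and $k=4$ follows from Dirac's theorem that every $4$-chromatic graph contains a $K_4$-subdivision (in particular, a $K_4$-minor). The case $k=5$ is known to be equivalent to the Four Color Theorem (Wagner), and $k=6$ was established by Robertson, Seymour and Thomas via a reduction to the Four Color Theorem. So the real content is $k \ge 7$.

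For the inductive step, I would take a counterexample $G$ of minimum $|V(G)|+|E(G)|$, so that $G$ is vertex-critical and in particular $\delta(G) \ge k-1$. Moreover $G$ cannot contain a vertex cut $S$ with $|S| \le k-2$ on which one could paste together colourings of the components of $G - S$, so $G$ should be forced to be roughly $(k-1)$-connected. The programme would then be to extract enough structure from $K_k$-minor-freeness -- by iteratively contracting well-chosen small-order separations and small dense subgraphs in the spirit of Mader and of Robertson--Seymour -- to force $G$ into a tightly constrained configuration, from which one either produces the desired $K_k$ minor directly or a proper $(k-1)$-colouring.

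The main obstacle is precisely what has kept Hadwiger's conjecture open for more than seventy-five years. The best density bound for $K_t$-minor-free graphs, due to Kostochka and to Thomason, gives average degree only $O(t\sqrt{\log t})$, and hence yields a $K_k$ minor only in graphs of chromatic number $\Omega(k\sqrt{\log k})$. Any purely extremal/density-based route is therefore off by a factor $\sqrt{\log k}$. Closing this gap demands genuine use of the chromatic number beyond the average degree, but the natural strengthenings of the conjecture -- e.g.\ with list chromatic number in place of $\chi$ -- are known to be false, so one cannot simply bootstrap a local colouring argument. This is the root cause of the difficulty.

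A more realistic partial goal would be to settle the next open case $k=7$ along the Robertson--Seymour--Thomas lines: establish a structural decomposition of $K_7$-minor-free graphs, show that every piece of the decomposition is $6$-colourable, and reduce to the Four Color Theorem. Even this would be a very substantial undertaking, and the critical obstruction would be the apparent absence of a clean structure theorem for $K_7$-minor-free graphs of large minimum degree, since such structural descriptions seem to grow exponentially more intricate as $k$ increases.
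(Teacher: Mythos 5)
This statement is Hadwiger's conjecture, which the paper cites (as Conjecture~\ref{hadwiger}) precisely because it is a famous open problem; the paper offers no proof of it and uses it only as motivation. Your ``proof proposal'' is, as you yourself make clear, not a proof but a survey of what is known and a diagnosis of why the problem is hard, and that assessment is accurate: the cases $k\le 6$ are settled (with $k=5,6$ resting on the Four Color Theorem), the Kostochka--Thomason extremal bound of $O(k\sqrt{\log k})$ average degree is the best density-based input, and the falsity of the list-chromatic strengthening rules out the most naive local colouring arguments. The genuine gap is therefore not a flaw in your reasoning so much as the absence of any known argument that closes the $\sqrt{\log k}$ factor; no one has a route from ``$G$ is $k$-chromatic and $K_k$-minor-free, hence roughly $(k-1)$-connected with minimum degree $\ge k-1$'' to a contradiction, and your inductive framework stalls at exactly the point you identify. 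In short: there is nothing to compare against in the paper, and your write-up should be read as a correct statement that the problem remains open rather than as a proof attempt.
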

An even stronger conclusion was suggested by Haj\'{o}s, who conjectured that every $k$-chromatic graph contains a {\em subdivision} of $K_k$, that is, a graph which can be obtained from $K_k$ by replacing its edges with pairwise internally vertex-disjoint paths connecting their original endpoints. Haj\'{o}s' conjecture is easily verified for $k \le 3$, and Dirac~\cite{dirac4} proved the case $k=4$.
\begin{theorem}[\cite{dirac4}]\label{thm:dirac4}
Every graph $G$ with $\chi(G)\ge 4$ contains a $K_4$-subdivision. 
\end{theorem}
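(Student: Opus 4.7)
The plan is to pass to a $4$-critical subgraph $H\subseteq G$, show that $H$ is $3$-connected, and then extract a $K_4$-subdivision from the $3$-connected graph $H$ via Menger's theorem.

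First, I may replace $G$ by a (vertex-)minimal $4$-chromatic subgraph $H$. Routine criticality arguments then yield $\delta(H)\ge 3$ (a vertex of degree $\le 2$ could be properly $3$-coloured last) and the $2$-connectivity of $H$. The key additional structural claim is that $H$ is in fact \emph{$3$-connected}. Suppose, for contradiction, that $\{u,v\}$ is a $2$-cut, partitioning $V(H)\setminus\{u,v\}$ into pieces $H_1,\dots,H_k$ ($k\ge 2$) that pairwise intersect in exactly $\{u,v\}$. Set $H_i^+:=H_i+uv$ (adding $uv$ as an edge if it is missing). If every $H_i^+$ were $3$-colourable, then $u$ and $v$ would receive distinct colours in each such colouring, and after permuting colours these could be aligned into a proper $3$-colouring of $H$, contradicting $\chi(H)=4$. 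Thus some $H_i^+$ has chromatic number $\ge 4$; since $H_i^+$ has strictly fewer vertices than $H$, the minimality of $H$ forces $H_i^+$ to contain a $K_4$-subdivision. This subdivision uses the (possibly virtual) edge $uv$ at most once, and one can reroute it through a $u$-$v$ path in another piece $H_j$ (which exists in $H$), yielding a $K_4$-subdivision in $H\subseteq G$ and a contradiction.

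Second, with $3$-connectivity of $H$ in hand, I produce the $K_4$-subdivision. I may assume $|V(H)|\ge 5$ (otherwise $H=K_4$ is itself the required subdivision). Choose any cycle $C$ in $H$; if $C$ is Hamiltonian, shorten it by replacing an arc with a chord of $C$ (one exists since $\delta(H)\ge 3$) so that some vertex $x\in V(H)\setminus V(C)$ exists. Applying the fan version of Menger's theorem in the $3$-connected graph $H$, I obtain three internally disjoint paths from $x$ to $V(C)$ ending at three distinct vertices $v_1,v_2,v_3\in V(C)$. These three paths together with the three arcs of $C$ joining the pairs among $v_1,v_2,v_3$ form six internally disjoint paths that realise a $K_4$-subdivision with branch vertices $x,v_1,v_2,v_3$.

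The step I expect to be the main obstacle is the $3$-connectivity reduction, and in particular the subcase where $uv\notin E(H)$: there one must verify carefully that a $K_4$-subdivision of $H_i^+$ using the virtual edge $uv$ can indeed be lifted to a genuine $K_4$-subdivision of $H$ by rerouting along a path through another piece $H_j$. In contrast, once $3$-connectivity is established, the Menger-based extraction of the subdivision is essentially automatic.
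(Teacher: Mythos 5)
Your proof is correct in substance, but it takes a fundamentally different route from the paper. The paper does not prove Theorem~\ref{thm:dirac4} directly: it cites Dirac's original 1952 result and later remarks (after stating Theorem~\ref{thm:dirdirac}) that Dirac's theorem can be recovered as a corollary of the directed result $\text{mader}_{\vec{\chi}}(K)=4$ for every orientation $K$ of $K_4$, via the identity $\vec{\chi}(\bivec{G})=\chi(G)$ and the fact that a subdivision of an orientation of $K_4$ in $\bivec{G}$ projects to a $K_4$-subdivision in $G$. In contrast, you give a self-contained, classical argument: pass to a critical subgraph, establish minimum degree $3$ and $3$-connectivity, then build the subdivision by a fan-Menger argument joining an outside vertex $x$ to three distinct vertices of a non-Hamiltonian cycle. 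This is the standard textbook proof, and it is more elementary than the paper's route, which leans on the considerably harder directed machinery (Mader's theorems on critically strongly connected digraphs and on $\vec{K}_4$-subdivisions in digraphs of out-degree $3$). The trade-off is that the paper's route yields the stronger directed statement as the main theorem, with the undirected version falling out for free.

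One point of imprecision worth fixing in your writeup: you introduce $H$ as ``a (vertex-)minimal $4$-chromatic subgraph of $G$'', which gives $4$-criticality, but then in the $3$-connectivity step you invoke ``the minimality of $H$'' to conclude that some $H_i^+$ (which has the extra virtual edge $uv$ and is \emph{not} a subgraph of $H$ or of $G$) contains a $K_4$-subdivision. That step needs $H$ to be a vertex-minimal \emph{counterexample} to the theorem (equivalently, an induction on $|V|$), not merely a minimal $4$-chromatic subgraph of the given $G$. With that re-framing the rest goes through: since $H_i^+$ has fewer vertices and $\chi(H_i^+)\ge4$, it contains a $K_4$-subdivision, which uses the possibly virtual edge $uv$ at most once (as the six subdivision paths of $K_4$ are internally disjoint), and $2$-connectivity guarantees a $u$-$v$ path through another piece $H_j$ to replace $uv$, with internal disjointness from the subdivision in $H_i^+$ ensured by $H_i\cap H_j=\{u,v\}$.
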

While the cases $k=5,6$ of Haj\'{o}s' conjecture remain open, it was disproved for all values $k \ge 7$ by Catlin~\cite{catlin}, who constructed explicit counterexamples, i.e. graphs with chromatic number $k$ which contain no $K_k$-subdivision (see also~\cite{thom05}). An even more devastating blow to the conjecture was delivered by Erd\H{o}s and Fajtlowicz~\cite{erdosfajtlowicz}, who showed that {\em almost all} graphs on $\Theta(k^2)$ vertices do {\em not} contain a $K_k$-subdivision, even though their chromatic number is $\Omega\left(k^2/\log k\right)$. 

On the positive side, it turned out that large enough chromatic number does in fact necessitate the existence of a $K_k$-subdivision. As a matter of fact, the following classical result established that even large density is sufficient.
\begin{theorem}[Bollob\'{a}s and Thomason~\cite{bollobas}, Koml\'{o}s and Szemer\'{e}di~\cite{Komlos}]\label{thm:density}
	There exists an absolute constant $C>0$ such that for every $k \in \mathbb{N}$, every graph $G$ with minimum degree at least $Ck^2$ contains a subdivision of $K_k$.
\end{theorem}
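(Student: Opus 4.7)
The plan is to prove the theorem in two main stages: first, reduce to a subgraph of very high connectivity; then, use a high-linkage property to carve out $\binom{k}{2}$ internally disjoint paths that, together with $k$ well-chosen branch vertices, realize a $K_k$-subdivision.

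For the reduction, I would invoke Mader's classical theorem that every graph of average degree at least $4\ell$ contains an $\ell$-connected subgraph. Starting from $G$ with minimum degree (and hence average degree) at least $Ck^2$, this yields a subgraph $H$ that is $\ell$-connected for some $\ell = \Omega(k^2)$, provided $C$ is chosen sufficiently large. In particular $|V(H)|\ge\ell+1$ is much larger than $k$.

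The core step is to upgrade high connectivity to high \emph{linkage}. Recall that a graph is $\ell$-linked if, for every choice of $\ell$ disjoint pairs $(s_i,t_i)$ of vertices, there exist pairwise internally vertex-disjoint paths $P_i$ joining $s_i$ to $t_i$. By the theorem of Thomas and Wollan (refining earlier results of Bollob\'as--Thomason), $f(\ell)$-connectivity forces $\ell$-linkage for some $f(\ell) = O(\ell)$. Applied with $\ell = \binom{k}{2}$ and with $C$ chosen large enough so that the connectivity guaranteed by Mader exceeds $f\!\left(\binom{k}{2}\right)+k$, this shows that even the graph $H' := H\setminus\{v_1,\dots,v_k\}$, obtained after removing any $k$ prospective branch vertices, remains $\binom{k}{2}$-linked.

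To produce the subdivision, I pick arbitrary branch vertices $v_1,\dots,v_k\in V(H)$. Since each $v_i$ has at least $\Omega(k^2)\gg k^2$ neighbours, a greedy selection yields for each $i$ a set of $k-1$ distinct neighbours $u_{ij}\in V(H')$ (one for every $j\ne i$) such that all the $u_{ij}$ are pairwise distinct across $i$ and $j$. Applying $\binom{k}{2}$-linkage inside $H'$ to the pairs $(u_{ij},u_{ji})$ yields internally disjoint paths; prepending the edges $v_i u_{ij}$ and appending the edges $u_{ji} v_j$ produces the $\binom{k}{2}$ required subdivision paths, with branch set $\{v_1,\dots,v_k\}$.

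The main obstacle, and the genuinely deep ingredient, is the linkage step: obtaining $\ell$-linkage from $O(\ell)$-connectivity requires the delicate minor-theoretic machinery of Thomas--Wollan (or, for the original $O(\ell^{1/2})$ dependence on $d$, the density-increment argument of Bollob\'as--Thomason or the expansion-based approach of Koml\'os--Szemer\'edi). Mader's theorem and the ``fan of $k-1$ neighbours'' trick at each branch vertex are routine by comparison, and the only care needed is to ensure via the connectivity slack that removing the $k$ branch vertices does not destroy the required linkage in $H'$.
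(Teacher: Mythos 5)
The paper does not prove this theorem; it is cited as a classical black-box result of Bollob\'as--Thomason and Koml\'os--Szemer\'edi, so there is no in-paper argument to compare against. Your reconstruction is a correct high-level account of the standard modern route, which is essentially the Bollob\'as--Thomason strategy: Mader's theorem to pass to an $\Omega(k^2)$-connected subgraph $H$; a connectivity-to-linkage theorem (Bollob\'as--Thomason originally, with linear dependence sharpened by Thomas--Wollan) to make $H$ minus the $k$ chosen branch vertices $\binom{k}{2}$-linked; a greedy fan of $k-1$ distinct neighbours per branch vertex (feasible since the minimum degree of $H$ is much larger than $k^2$ once $C$ is large); and then linking the $\binom{k}{2}$ disjoint terminal pairs. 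The bookkeeping is sound: the slack of $k$ in connectivity before invoking linkage correctly guards against the removal of the branch vertices, and the linked paths together with the pendant edges at the $v_i$ are pairwise internally disjoint because the paths live entirely in $H'$. The only caveat worth flagging is that the entire weight of the argument rests on the two deep imports (Mader's connectivity theorem and the connectivity-implies-linkage theorem); as a blind reconstruction of a cited classical theorem this is perfectly appropriate, and your identification of the linkage step as the genuinely hard ingredient is accurate.
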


Since every graph $G$ contains a subgraph of minimum degree at least $\chi(G)-1$,
one can deduce from Theorem~\ref{thm:density} that having chromatic number larger than $Ck^2$ (for some absolute constant $C$) is sufficient to guarantee a $K_k$-subdivision.
For $k \in \mathbb{N}$, let $f(k)$ be the smallest integer such that every graph with chromatic number at least $f(k)$ contains a $K_k$-subdivision.
Theorem~\ref{thm:density} then implies a quadratic upper bound $f(k)=O(k^2)$,
while the result of Erd\H{o}s and Fajtlowicz~\cite{erdosfajtlowicz} establishes a lower bound of $f(k)=\Omega\left(k^2/\log k\right)$.
These remain the best known bounds on $f(k)$;
Fox et al.~\cite{foxetal} conjectured that the truth lies with the lower
bound.

The upshot of the above discussion is that a subdivision of any given graph is contained in any graph of sufficiently large chromatic number. In this paper we investigate this phenomenon in the realm of directed graphs; we ask in what form, and to what extent, it holds.
The notion of subdivision extends naturally to directed graphs: given a digraph $F$, a \emph{subdivision} of $F$ is any digraph obtained by replacing every arc $(x,y)$ in $F$ by a directed path from $x$ to $y$, such that subdivision-paths of different arcs are internally vertex-disjoint.
It is less clear, however, how to choose a suitable chromatic number concept, which would provide a rich family of forcible digraph subdivisions.
The \emph{chromatic number} $\chi(D)$ of a digraph $D$ is defined as the chromatic number of the underlying graph of $D$. The fact that any graph, however high its chromatic number is, can be oriented acyclically and hence avoid containing any directed cycle, already hints that $\chi(D)$ being large might only have limited impact on digraph subdivision containment.
In fact, as was noted by Aboulker, Cohen, Havet, Lochet, Moura, and Thomass\'e~\cite{aboulker}, the family of digraphs $F$ which can be forced as a subdivision by high chromatic number is very limited: it consists of the orientations of forests. See~\cite{burr} and~\cite{chromorientedcycles}, respectively, for the positive and negative directions of this result. As a consequence, we see that high chromatic number of the underlying graph is not even strong enough to force the subdivision of any particular orientation of a cycle. 

Another widely-studied digraph coloring parameter --- which, in contrast to the chromatic number, takes into account the direction of edges --- is the \emph{dichromatic number}. Given a digraph $D$, an \emph{acyclic $k$-coloring} of $D$ is a mapping $c:V(D) \rightarrow [k]$ such that for every color $i \in [k]$, the \emph{color class} $c^{-1}(i) \subseteq V(D)$ induces an acyclic subdigraph of $D$. The \emph{dichromatic number} $\vec{\chi}(D)$ is defined as the smallest $k \in \mathbb{N}$ for which an acyclic $k$-coloring of $D$ exists. Introduced in 1982 by Neumann-Lara \cite{neulara}, this parameter was rediscovered and popularized by Mohar \cite{MoharEdgeWeight}, and since then has received further attention, see \cite{aboulker, perfect, lists, HARUTYUNYAN2019, dig5, dig4, fractionalNL} for some selected recent results.   

Aboulker~et~al.~\cite{aboulker} initiated the study of the existence of various subdivisions in digraphs of large dichromatic number.
In one of their main results, they show that a subdivision of any given digraph is contained in digraphs of sufficiently large dichromatic number.
\begin{theorem}[\cite{aboulker}, Theorem 32]\label{thm:dichromatic_clique}
	Let $F$ be a digraph with $n$ vertices and $m$ arcs. Then every digraph $D$ with $\vec{\chi}(D)\ge 4^m(n-1)+1$ contains a subdivision of $F$.
\end{theorem}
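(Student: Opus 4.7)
I would prove this by induction on $m=|E(F)|$. For the base case $m=0$, the hypothesis $\vec{\chi}(D)\ge n$ forces $|V(D)|\ge n$, and any $n$ distinct vertices of $D$ realize $F$. For the inductive step, fix an arc $(u,v)\in E(F)$ and set $F':=F-(u,v)$; the plan is to peel off a single directed path $P$ in $D$ to serve as the subdivision of the arc $(u,v)$, and apply the inductive hypothesis to $D-\operatorname{int}(P)$ to realize $F'$, with the endpoints of $P$ playing the roles of $u,v$.

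The central technical ingredient is a \emph{path-extraction lemma} of the shape: if $\vec{\chi}(D)\ge 4k+1$, then $D$ contains a directed path $P$ with endpoints $a,b$ such that $a,b\in V(D)\setminus\operatorname{int}(P)$ and $\vec{\chi}\bigl(D-\operatorname{int}(P)\bigr)\ge k+1$. The recurrence $4^m(n-1)+1=4\cdot 4^{m-1}(n-1)+1$ is precisely what closes the induction: applied with $k=4^{m-1}(n-1)$, the lemma produces a dipath $P$ whose deletion leaves a digraph of dichromatic number at least $4^{m-1}(n-1)+1$, to which the inductive hypothesis on $F'$ applies.

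To prove the lemma, I would first reduce to a strongly connected subdigraph $D^\star\subseteq D$ with $\vec{\chi}(D^\star)=\vec{\chi}(D)$, using the standard fact that $\vec{\chi}$ is attained on some strong component; strong connectivity is what guarantees the existence of a directed $a\to b$ path for any chosen endpoints. The factor-of-$4$ loss should then come from a double application of the subadditivity $\vec{\chi}(D)\le\vec{\chi}(D[X])+\vec{\chi}(D-X)$: a BFS-style out-layering from a source $a$ first separates a ``reservoir'' of high dichromatic number from the rest (one factor of $2$), and a second decomposition within the reservoir isolates a shortest $a\to b$ dipath from a still high-dichromatic remainder (another factor of $2$).

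The main obstacle, and the point where I expect the argument to be most delicate, is \emph{aligning} the endpoints of the extracted path $P$ with the prescribed images of $u,v$ in the inductively constructed $F'$-subdivision. I would handle this either by strengthening the inductive hypothesis to allow prescribing a small set of anchor vertices (which would account for the additive $(n-1)$ term in the bound), or by formulating the path-extraction lemma so as to produce many eligible pairs $(a,b)$ inside the strongly connected reservoir, so that the alignment is carried out only \emph{after} the $F'$-subdivision has been found. Coordinating the factor-of-$4$ path-extraction loss with this anchor-alignment constraint is the technical crux of the argument.
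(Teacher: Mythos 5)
Your high-level plan --- induction on the number of arcs, a multiplicative factor of $4$ per arc, and the base case $\text{mader}_{\vec{\chi}}(F)=n$ for arc-free $F$ --- is exactly right; this is indeed how the bound $4^m(n-1)+1$ arises, by iterating the per-arc inequality $\text{mader}_{\vec{\chi}}(F) \le 4\,\text{mader}_{\vec{\chi}}(F-e)-3$ of Theorem~\ref{addingarcs}. The gap is in your \emph{path-extraction lemma}: you peel off the subdivision path $P$ for the new arc \emph{before} constructing the $F'$-subdivision, and when you then invoke the inductive hypothesis on $D-\operatorname{int}(P)$ nothing forces the resulting subdivision of $F'$ to place its branch vertices for $u$ and $v$ at the prescribed endpoints $a,b$ of $P$. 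You flag this alignment problem correctly, but neither of your fixes resolves it: strengthening the inductive hypothesis so as to anchor branch vertices at prescribed locations is not available in this generality (large dichromatic number by itself cannot dictate where branch vertices go), and ``producing many eligible pairs'' still gives no reason why one of them should match the output of an application of the inductive hypothesis that is made entirely independently of the chosen pair.

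The argument behind Theorem~\ref{addingarcs} resolves this by inverting the order and, crucially, by performing \emph{both} an out-BFS and an in-BFS from a \emph{single} common root. Pass to a strongly connected subdigraph attaining $\vec{\chi}(D)$, fix a root $r$, and let $L_i$ (resp.\ $N_j$) denote the set of vertices at out-distance $i$ from $r$ (resp.\ in-distance $j$ to $r$). Since arcs increase out-level by at most one and decrease in-level by at most one, any directed cycle that is monochromatic under the $4$-coloring by $(i\bmod 2,\; j\bmod 2)$ is confined to a single cell $L_i\cap N_j$; hence $\vec{\chi}(D)\le 4\max_{i,j}\vec{\chi}(D[L_i\cap N_j])$, and $\vec{\chi}(D)\ge 4k+1$ yields a cell $X=L_i\cap N_j$ with $\vec{\chi}(D[X])\ge k+1$. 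Now apply the inductive hypothesis \emph{inside} $D[X]$ to obtain a subdivision $S$ of $F-e$ with branch vertices $\phi(u),\phi(v)\in X$, and only then route the missing arc: follow the in-BFS tree from $\phi(u)$ down to $r$ (strictly decreasing in-level, hence internally disjoint from $X\subseteq N_j$), then the out-BFS tree from $r$ up to $\phi(v)$ (strictly increasing out-level, hence internally disjoint from $X\subseteq L_i$), and short-cut the resulting directed walk to a dipath. This dipath is internally disjoint from $V(S)\subseteq X$, completing a subdivision of $F$. In particular the factor $4$ is not two nested BFS passes each losing a factor $2$, as your sketch suggests, but the $2\times 2$ parity coloring of a \emph{simultaneous} out- and in-BFS from the same root --- precisely the structure that makes the two routing paths exist and be automatically disjoint from the recursively built subdivision.
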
 
Following the terminology in~\cite{aboulker}, for a digraph $F$ we denote by $\text{mader}_{\vec{\chi}}(F)$ the smallest integer $k \ge 1$ such that every digraph $D$ with $\vec{\chi}(D) \ge k$ contains a subdivision of $F$. We call $\text{mader}_{\vec{\chi}}(F)$ the {\em (dichromatic) Mader number of $F$}.

The problem of 
obtaining a polynomial bound (in terms of the number of vertices and arcs of $F$) on the Mader number remains open, and seems quite challenging. One reason for the increased difficulty 
compared to the undirected case is that there is no analogue of Theorem~\ref{thm:density} for directed graphs. In fact, it follows from a result of Thomassen~\cite{thom85} that there exist digraphs of arbitrarily high minimum out- and in-degree, which do not even contain a subdivision of $\bivec{K}_3$, the bioriented triangle. Consequently, entirely new methods have to be developed to force clique-subdivisions in digraphs of large dichromatic number, since any methods for addressing this problem must differ substantially from the established density-based ideas used in the undirected theory. 

%

In light of the difficulty of improving the bound in Theorem \ref{thm:dichromatic_clique} in general, it is natural to vie for obtaining better upper bounds 
for special classes of digraphs. One appealing conjecture in this vein was raised by 
Aboulker et al.~\cite{aboulker}. Let $C_{\ell}$ denote the (undirected) cycle of length $\ell$. 
\begin{conjecture}[\cite{aboulker}, Conjecture 39]\label{conj:cycles}
	If $C$ is an orientation of $C_\ell$, then $\text{mader}_{\vec{\chi}}(C)=\ell$.
\end{conjecture}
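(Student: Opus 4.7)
The plan is to prove both directions separately. For the lower bound $\text{mader}_{\vec{\chi}}(C)\ge \ell$, I would exhibit the bioriented complete graph $\bivec{K}_{\ell-1}$ on $\ell-1$ vertices: a pigeonhole argument shows its dichromatic number is exactly $\ell-1$ (any acyclic $(\ell-2)$-coloring would force two vertices into one class and hence a monochromatic digon), while having only $\ell-1<\ell$ vertices it cannot possibly contain any subdivision of $C$.

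For the upper bound, let $D$ be a digraph with $\vec{\chi}(D)\ge \ell$. First pass to a vertex-critical subgraph $D'\subseteq D$ with $\vec{\chi}(D')=\ell$; standard arguments (see~\cite{aboulker}) show $D'$ is strongly connected and satisfies $\delta^+(D'),\delta^-(D')\ge \ell-1$. Then parse $C$ into its maximal directed sub-paths (``blocks'') $B_1,\dots,B_{2s}$ of lengths $a_1,\dots,a_{2s}$ with $\sum a_i=\ell$, where $s=0$ corresponds to $C$ being a consistently oriented cycle and the $2s$ block-endpoints alternate between sources and sinks of $C$. The base case $s=0$ follows from a longest-path argument in $D'$: take a longest directed path $v_0\cdots v_t$, observe that all $\ge \ell-1$ out-neighbors of $v_t$ must lie on the path, and pick the out-neighbor $v_j$ with smallest index, yielding a directed cycle of length $t-j+1\ge \ell$ that subdivides $\vec{C}_\ell$.

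For $s\ge 1$ the plan is to delete one arc of $C$ to obtain an orientation $P$ of a path on $\ell$ vertices whose endpoints $u,v$ correspond to the deleted arc, then (i) find a subdivision of $P$ in $D'$ with appropriately chosen endpoint images, and (ii) close the cycle by attaching a directed path between the endpoint images in the correct direction, internally disjoint from the subdivision of $P$. Step (i) I would tackle by an iterative block-by-block construction: starting from a suitable vertex, extend a directed path of length $\ge a_1$ into a ``sink'' image using the out-degree bound, then extend backwards from that sink using the in-degree bound to realize $B_2$, and so on, at each stage keeping enough unused vertices in reserve. Step (ii) is the principal obstacle, since a naive Menger argument can fail if the subdivision of $P$ already exhausts most of $D'$.

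To overcome the closing-up step I would pursue one of two strategies. The first is to establish a strengthened ``flexibility'' lemma: if $D'$ is $\vec{\chi}$-critical of value $\ell$, then one can realize any orientation of $P_\ell$ as a subdivision with its endpoints placed at a pair of vertices selected from a large set, so that enough ``outside'' connectivity remains to complete the cycle. The second, more robust strategy is a minimum counterexample argument, taking $D$ of minimum order satisfying $\vec{\chi}(D)\ge \ell$ yet containing no subdivision of $C$: from a maximal partial subdivision (say, realizing $2s-1$ blocks), one attempts to color the remaining vertices by $\ell-1$ acyclic classes using the critical structure and the blocked last-path, thereby contradicting $\vec{\chi}(D)\ge \ell$. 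I expect the inductive organization by the number $s$ of blocks, together with a clean formulation of the flexibility lemma for oriented paths, to be the technical crux of the proof.
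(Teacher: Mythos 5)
Your lower bound via $\bivec{K}_{\ell-1}$ is correct and matches the paper's remark. Your upper bound for consistently oriented cycles ($s=0$) via the longest-directed-path argument in a $(\ell-1)$-out-degenerate critical subgraph is also correct and essentially reproduces the argument of Aboulker et al.\ for that special case.

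The problem is with everything beyond $s=0$, which is exactly where the conjecture was open. You identify the obstacle accurately --- after realizing a subdivision of the oriented path $P = C - e$, there is no obvious way to close it up into a cycle, because the usual Menger/connectivity argument in a $k$-dicritical digraph only gives strong $1$-vertex-connectivity (Lemma~\ref{strong connectivity}), not the kind of robustness needed to route a final path avoiding the whole subdivision of $P$. But neither of your two proposed remedies is actually carried out, and I don't believe either one would work as sketched. The ``flexibility lemma'' is aspirational: you never say precisely what set of endpoint-placements is guaranteed, nor why out/in-degree bounds $\geq \ell - 1$ would leave ``enough outside connectivity,'' and indeed a $k$-dicritical digraph can have cut vertices. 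The ``minimum counterexample, maximal partial subdivision'' strategy is similarly under-specified: you would need to explain why the failure to realize the last block yields an acyclic $(\ell-1)$-coloring, and the block-by-block greedy construction with ``reserve'' vertices already has unresolved disjointness bookkeeping. Since you yourself call these steps ``the technical crux,'' the proposal is a plan, not a proof.

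For comparison, the paper's argument does not go through a subdivision of the path $C-e$ at all. It proves the stronger Theorem~\ref{thm:main1} about octi, via an ear-addition lemma (Theorem~\ref{kempe}): one partitions $V(D)$ into $Y_1$ (first $k$ colors) and $Y_2$ (remaining $M$ colors) with $|Y_1|$ \emph{maximal}, realizes $F$ inside $D[Y_2]$, and then builds the attached ear entirely inside $D[Y_1]$. The key technical ingredients are a directed Kempe-chain lemma (Lemma~\ref{componentswitch}: swapping two colors inside a strong component of a two-color subdigraph preserves acyclicity) used to lexicographically minimize the color profile of $N^+(x_0)$, plus the maximality of $|Y_1|$ to force a monochromatic dicycle through $x_0$ within color class $1$. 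These give pairwise internally disjoint directed paths $Q_{i-1,i}$ between representatives of consecutive color classes, oriented to match the ear. That machinery --- Kempe switching on directed strong components combined with the extremal choice of the partition --- is what replaces your missing ``closing-up'' argument, and nothing analogous appears in your proposal.
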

\noindent
In \cite{aboulker}, Conjecture \ref{conj:cycles} is proved for directed cycles and an upper bound of $2\ell-1$ is established for arbitrary orientations.

\subsection{Our results.}
Note that the Mader number of every digraph $F$ is at least the number of its vertices.
Indeed, the complete digraph of order $v(F)-1$ has dichromatic number $v(F)-1$, but does not have enough vertices to host a subdivision of $F$. Hence Conjecture~\ref{conj:cycles} states that, in a sense, the Mader number
of orientations of cycles is as small as it could be.

In this paper we resolve Conjecture \ref{conj:cycles} and go on to study the more general question: for which digraphs $F$ does it hold that $\text{mader}_{\vec{\chi}}(F) = v(F)$? In the first main result of our paper we prove that this equality holds for a large class of digraphs, which includes orientations of cactus graphs\footnote{Cactus graphs are usually defined as the graphs which do not contain a pair of cycles sharing at least two vertices (or, equivalently, as the graphs which do not contain $K_4 - e$ as a minor).
} (and hence all orientations of cycles), as well as all bioriented forests. This class of digraphs, a member of which we refer to as {\em octus}\footnote{The name alludes to the fact that every orientation of a cactus graph is an octus.
  We should warn, however, that the class of octi is strictly larger than the class of orientations of cacti, as is explained following Theorem \ref{thm:main1}.
}, is defined inductively as follows.

\begin{definition}\label{def:octi} The class of {\em octi digraphs} is defined as follows. 
%
	\begin{itemize}
		\item $K_1$ is an octus. 
		\item Let $F$ be an octus, and let $v_0 \in V(F)$. 
		Let $P = v_1,\ldots,v_k$, $k \geq 1$, be an orientation of a path which is disjoint from $V(F)$.
		Let $F^\ast$ be obtained from $F$ by adding the path $P$, both arcs $(v_0,v_1),(v_1,v_0)$, and exactly one of the arcs $(v_0,v_k),(v_k,v_0)$. Then $F^*$ is also an \nolinebreak octus.  
		\item If $F$ is an octus then every subdigraph of $F$ is also an octus. 
	\end{itemize} 
\end{definition}
We note that the path $P$ in the second item of Definition \ref{def:octi} is allowed to consist of a single vertex, which corresponds to attaching a digon to $F$ at $v_0$. The operation described in Item 2 of Definition \ref{def:octi} will be called {\em ear addition}. 
Our first main result is as follows: 
\begin{theorem}\label{thm:main1}
	For every octus $F$, we have $\text{mader}_{\vec{\chi}}(F)=v(F)$. 
\end{theorem}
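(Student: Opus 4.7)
The lower bound $\text{mader}_{\vec{\chi}}(F) \ge v(F)$ is witnessed by $\bivec{K}_{v(F)-1}$: it has dichromatic number $v(F)-1$ but too few vertices to contain $F$. For the upper bound I would induct on the construction of $F$ given by Definition~\ref{def:octi}. The base case $F=K_1$ is immediate. The subdigraph-closure clause is handled by a separate (easier) argument, e.g.\ by induction on $v(F)$ and the observation that one can always identify a ``removable'' vertex of $F$ such that its removal leaves an octus to which induction applies. The heart of the argument is the \emph{ear-addition step}.

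So suppose $F^{\ast}$ is obtained from an octus $F$ with $v(F)=n$ by an ear addition at $v_0 \in V(F)$ introducing $k \ge 1$ new vertices $v_1,\ldots,v_k$, giving $v(F^{\ast})=n+k$; assume by induction that $\text{mader}_{\vec{\chi}}(F) \le n$. Let $D$ be a digraph with $\vec{\chi}(D) \ge n+k$ and no $F^{\ast}$-subdivision; I would derive a contradiction. Passing to a vertex-critical induced subdigraph, we may assume $\vec{\chi}(D)=n+k$ and $\vec{\chi}(D-v)<n+k$ for every $v$, which forces $d^{+}(v),d^{-}(v) \ge n+k-1$ for every $v$. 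The inductive hypothesis applied to $F$ yields an $F$-subdivision $H$ in $D$; pick one minimising $|V(H)|$. Let $u_0 \in V(H)$ be the image of $v_0$ and set $X := V(H) \setminus \{u_0\}$. The remaining task is to locate in $D$ a subdivision of the ear, rooted at $u_0$ and internally disjoint from $X$: two internally disjoint directed paths between $u_0$ and some $w_1 \notin X$ (subdividing the digon $v_0 \leftrightarrow v_1$), an oriented subdivision-path of $P$ on $k-1$ fresh vertices ending at a vertex $w_k$, and a directed path between $u_0$ and $w_k$ with the prescribed orientation.

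For this I would combine two ingredients. First, using the subadditivity $\vec{\chi}(D) \le \vec{\chi}(D[X]) + \vec{\chi}(D-X)$ (take the disjoint union of acyclic colourings) together with a bound $\vec{\chi}(D[X]) \le n-1$ (coming from the minimality of $H$, which forces $D[X]$ to be essentially a subdivision of the $(n-1)$-vertex octus $F-v_0$), I obtain $\vec{\chi}(D-X) \ge k+1$. Since the ear itself is an octus on $k+1$ vertices, the inductive hypothesis applied inside $D-X$ produces an ear-subdivision \emph{somewhere} in $D-X$. Second, I would anchor such an ear-subdivision at the prescribed vertex $u_0$ using vertex-criticality (which gives $u_0$ at least $k$ in- and $k$ out-neighbours in $V(D)\setminus X$), together with a rerouting argument that exploits the minimality of $|V(H)|$ to force any ``good'' ear-subdivision to pass through $u_0$ rather than through any vertex of $X$.

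The principal obstacle will be the anchoring step: the bare inductive hypothesis yields an ear somewhere in $D-X$, but not necessarily one passing through $u_0$ and respecting the prescribed arc orientations. A natural remedy is to strengthen the inductive statement to a \emph{rooted} form---asserting that any $(n+k)$-dichromatic digraph contains an $F^{\ast}$-subdivision through each prescribed vertex $u_0$ of sufficient in/out-degree---which can be iterated more smoothly. Matching the orientations of the ear path $P$ (where each arc can independently point either way), and of the closing arc between $v_0$ and $v_k$, is likely to require a careful case analysis and is the most delicate part of the argument: high dichromatic number does not by itself guarantee arbitrarily-oriented paths between specified endpoints, so we must lean on the large in- and out-neighbourhoods of the vertices at each ``turning point'' of $P$ to realise each prescribed sub-path of the ear in turn.
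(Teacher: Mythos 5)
Your high-level strategy (reduce to ear additions, handle the subdigraph case separately) matches the paper's, but the proposed execution of the ear-addition step has a genuine gap that you yourself flag as the ``principal obstacle'' and never close. Concretely: after fixing a minimum-vertex $F$-subdivision $H$ and setting $X := V(H)\setminus\{u_0\}$, your bound $\vec{\chi}(D[X])\le n-1$ is fine (if $D[X]$ had dichromatic number $\geq n$ it would contain a smaller $F$-subdivision), so $\vec{\chi}(D-X)\geq k+1$ does follow. But the ear-subdivision you then extract from $D-X$ via the inductive hypothesis lands at an arbitrary vertex, and nothing in your plan transports it to $u_0$ or makes its internal vertices avoid $X$ in a controlled way. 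Your fallback, ``$u_0$ has at least $k$ in/out-neighbours in $V(D)\setminus X$,'' also does not follow: vertex-criticality gives $d^\pm(u_0)\geq n+k-1$, but $|X|=|V(H)|-1$ can be much larger than $n-1$ once $H$ has subdivision vertices, so nothing prevents most of $u_0$'s neighbourhood from sitting inside $X$. The orientation-matching along the ear is likewise left entirely open. As it stands, these are not deferred details but the entire content of the theorem.

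The paper avoids the anchoring problem altogether by \emph{not} fixing $H$ first and then searching for an ear. Instead it fixes an acyclic $(\text{mader}_{\vec{\chi}}(F)+k)$-colouring maximising the size of the first $k$ colour classes $Y_1$, finds the $F$-subdivision inside the complement $Y_2$ of those classes, and then runs a Kempe-switch argument (Lemma~\ref{componentswitch}) combined with a lexicographic minimisation over recolourings of $D[Y_1]$ tracking the colour profile of $N^+(x_0)$. This produces, by design, a sequence $x_1,\dots,x_k$ of out-neighbours of $x_0$ together with directed connections between consecutive $x_i$'s lying in two-colour strong components, and a monochromatic cycle through $x_0$ in colour $1$ --- exactly the ingredients of a rooted, orientation-respecting ear at $x_0$ that is automatically disjoint from $Y_2\supseteq V(S)\setminus\{x_0\}$. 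Your ``strengthened rooted induction'' instinct points in a similar direction, but without a concrete mechanism like the Kempe-chain/lexicographic-potential argument it remains a wish rather than a proof.
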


This theorem has a couple of immediate consequences, each of which extends results of \cite{aboulker}. 
It is not difficult to see that orientations of cacti are precisely the octi which have no digons. 
Therefore, we have the following:
\begin{corollary}\label{cor:cactus}
	For every orientation $F$ of a cactus, we have $\text{mader}_{\vec{\chi}}(F)=v(F)$. 
\end{corollary}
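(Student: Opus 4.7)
The plan is to deduce the corollary directly from Theorem~\ref{thm:main1}, so the only real task is to verify the stated fact that every orientation of a cactus is an octus. The matching lower bound $\text{mader}_{\vec{\chi}}(F) \geq v(F)$ was already noted in the introduction, via the complete biorientation $\bivec{K}_{v(F)-1}$: it has dichromatic number $v(F)-1$ (every color class must be an independent set, since distinct vertices span a digon) and too few vertices to host a subdivision of $F$.

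For the upper bound, let $F$ be an arbitrary orientation of a cactus $G$. By the definition of a cactus, the blocks (maximal $2$-edge-connected subgraphs) of $G$ are all either single edges or cycles, and any two of them share at most one vertex. I will construct an octus $F^+$ that contains $F$ as a subdigraph; then by the third item of Definition~\ref{def:octi}, $F$ itself is an octus, and Theorem~\ref{thm:main1} yields $\text{mader}_{\vec{\chi}}(F) \leq v(F)$.

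The construction proceeds by induction on the number of blocks of $G$. Fix a vertex $v_0 \in V(G)$ and root the block-cut tree of $G$ at $v_0$; process the blocks in a top-down traversal order. Start with the octus $F^+ := K_1$ consisting of the single vertex $v_0$. When processing a block $B$, let $v_0' \in V(B)$ be the unique vertex of $B$ that either equals $v_0$ or is the cut vertex shared with the parent block; by the traversal order, $v_0'$ already lies in $V(F^+)$. If $B$ is a single edge $v_0'v_1$, apply ear addition at $v_0'$ with the one-vertex path $P = v_1$, which attaches a digon $v_0'v_1$ to $F^+$. If $B$ is a cycle $v_0',v_1,\ldots,v_k,v_0'$, apply ear addition at $v_0'$ with the path $P = v_1,\ldots,v_k$ oriented exactly as in $F$, and choose the closing arc between $v_0'$ and $v_k$ to match its orientation in $F$. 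Each ear addition uses fresh vertices $v_1,\ldots,v_k$, which is valid because distinct blocks of $G$ share at most one vertex. After all blocks have been processed, $F^+$ is an octus, and $F$ is obtained from $F^+$ by deleting, for each block, exactly one of the two arcs of the digon $v_0'v_1$ introduced at that step. Hence $F \subseteq F^+$, as required.

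The main (and very mild) obstacle is simply bookkeeping: one must check that the block-tree traversal is well-defined, that each ear addition inserts only new vertices, and that the digons created by the ear-addition operation are precisely the edges one is free to thin out to recover $F$. All three are immediate from the block structure of a cactus.
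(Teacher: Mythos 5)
Your proof is correct and takes the same route as the paper: both reduce the corollary to Theorem~\ref{thm:main1} by verifying that every orientation of a cactus is an octus, a fact the paper states as ``not difficult to see'' while you supply the explicit block-by-block ear-addition construction. The only omission is that your block-cut-tree argument implicitly assumes the cactus is connected; this is harmless, since a disconnected cactus (and any orientation of it) embeds into a connected one by adding bridges between components, and octi are closed under subdigraphs.
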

Since every cycle is a cactus, Corollary~\ref{cor:cactus} immediately implies Conjecture \ref{conj:cycles}.

Another immediate corollary of Theorem~\ref{thm:main1} is concerned with biorientations of forests. 
Here, a {\em biorientation} of an undirected graph $G$ is the (symmetric) digraph $\bivec{G}$ obtained by replacing each edge $\{x,y\}$ of $G$ with the arcs $(x,y),(y,x)$. Every bioriented tree can be obtained from $K_1$ by a sequence of ear additions where, at each step, we add a new vertex and connect it by a digon to one of the vertices of the existing digraph. Hence, every biorientation of a forest is an octus, and we have the following:
\begin{corollary}\label{cor:biforest}
	If $T$ is an (undirected) forest, then $\text{mader}_{\vec{\chi}}(\bivec{T})=v(\bivec{T})$.
\end{corollary}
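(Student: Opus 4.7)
The plan is to deduce Corollary \ref{cor:biforest} as a direct application of Theorem \ref{thm:main1}, by verifying the purely structural claim that every biorientation of a forest is an octus. Since biorientation preserves vertex count, i.e.\ $v(\bivec{T})=v(T)$, the corollary then follows immediately.

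First I would handle the case that $T$ is a tree, proving that $\bivec{T}$ is an octus by induction on $v(T)$. The base case $v(T)=1$ reduces to $K_1$, which is declared an octus in the first clause of Definition \ref{def:octi}. For the inductive step, pick a leaf $v$ of $T$ with unique neighbor $u$, and let $T'=T-v$. By the induction hypothesis, $\bivec{T'}$ is an octus. Now apply the ear-addition clause of Definition \ref{def:octi} with $F=\bivec{T'}$, $v_0=u$ and the single-vertex path $P=v_1$ with $v_1=v$; as noted immediately after the definition, this is precisely the operation of attaching a digon to $\bivec{T'}$ at $u$, and it produces $\bivec{T}$. Hence $\bivec{T}$ is an octus.

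For a general forest $T$, I would observe that $T$ embeds as a subgraph into some tree $T^\ast$ (for instance, connect the components by additional edges). Then $\bivec{T}$ is a subdigraph of $\bivec{T^\ast}$, which is an octus by the previous paragraph; by the third clause of Definition \ref{def:octi}, the class of octi is closed under taking subdigraphs, so $\bivec{T}$ is an octus. Theorem \ref{thm:main1} now gives $\text{mader}_{\vec{\chi}}(\bivec{T})=v(\bivec{T})$, completing the argument. The entire content of the corollary is carried by Theorem \ref{thm:main1}; the only step specific to bioriented forests is the elementary observation that leaves of a tree correspond to digon ear-additions, so there is no genuine obstacle beyond correctly identifying the ear-addition that recovers a bioriented leaf.
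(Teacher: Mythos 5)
Your proof is correct and follows essentially the same route as the paper: it builds a bioriented tree from $K_1$ by repeated single-vertex ear-additions (attaching a digon at a leaf's neighbor), extends to forests via the closure of octi under subdigraphs, and then invokes Theorem~\ref{thm:main1}. This matches the paper's sketch almost verbatim.
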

Corollary~\ref{cor:biforest} strengthens another result from~\cite{aboulker}, where the conclusion was shown to hold for every orientation of a forest.

Next we discuss digraphs on a small number of vertices. The smallest digraph not covered by Theorem~\ref{thm:main1} is the bioriented triangle minus an edge. It turns out that this digraph, too, has the property that its Mader number equals its number of vertices. 
\begin{proposition}\label{K_3-e}
	$\text{mader}_{\vec{\chi}}(\bivec{K}_3 - e) = 3$.
\end{proposition}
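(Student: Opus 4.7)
The lower bound $\text{mader}_{\vec{\chi}}(\bivec{K}_3-e)\geq 3$ is immediate from the general fact cited in the excerpt: $\bivec{K}_2$ has dichromatic number $2$ and only two vertices, so it cannot host a subdivision of any digraph on three vertices. The content is the upper bound: every digraph $D$ with $\vec{\chi}(D)\geq 3$ contains a subdivision of $\bivec{K}_3-e$.

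I would first pass to an inclusion-minimal $3$-dichromatic subdigraph, so that the resulting $D$ is $3$-dicritical; in particular it is strongly connected and satisfies $\vec{\chi}(D-v)\leq 2$ for every vertex $v$. Fix any $v$ and any acyclic $2$-coloring $V(D)\setminus\{v\}=V_1\cup V_2$ of $D-v$. Because $\vec{\chi}(D)=3$, assigning $v$ either color creates a monochromatic directed cycle, yielding directed cycles $C_1\subseteq D[V_1\cup\{v\}]$ and $C_2\subseteq D[V_2\cup\{v\}]$ both passing through $v$; as $V_1\cap V_2=\emptyset$, these satisfy $V(C_1)\cap V(C_2)=\{v\}$. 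Thus, for any $a\in V(C_1)\setminus\{v\}$ and $c\in V(C_2)\setminus\{v\}$, the cycles $C_1\cup C_2$ already supply four of the five subdivision-paths needed for a $\bivec{K}_3-e$ subdivision with branch vertices $\{a,v,c\}$: the subdivided digon $a\leftrightarrow v$ is carved out of $C_1$, and the subdivided digon $v\leftrightarrow c$ out of $C_2$.

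What remains is the fifth path --- a directed \emph{bridge} from $a$ to $c$, or from $c$ to $a$, whose interior avoids $V(C_1)\cup V(C_2)$. To produce one, I would make an extremal choice of the quadruple $(V_1,V_2,C_1,C_2)$, e.g.\ minimizing $|V(C_1)|+|V(C_2)|$ and then taking $C_1,C_2$ to be shortest cycles through $v$ inside $V_1\cup\{v\}$ and $V_2\cup\{v\}$ respectively, and argue by contradiction. If no admissible bridge existed for any pair $(a,c)$, then the set $V(C_1)\cup V(C_2)\setminus\{a,c\}$ would cut off $a$ from $c$ in $D$ in both directions. The plan is to convert this separation into a recoloring of $D-v$: move a carefully chosen subset $S\subseteq V(C_1)\setminus\{v\}$ (naturally trying $S=V(C_1)\setminus\{v\}$ first) from color $1$ to color $2$, with the intended outcome that $(V_1\setminus S)\cup\{v\}$ and $V_2\cup S$ are both acyclic, which would exhibit an acyclic $2$-coloring of $D$ and contradict $\vec{\chi}(D)=3$.

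The main obstacle is precisely this recoloring step. One must simultaneously ensure (i) that $S$ hits every directed cycle through $v$ inside $D[V_1\cup\{v\}]$, in order for the new first color class to be acyclic --- a property one hopes to guarantee by the minimality of $C_1$, possibly combined with further extremal tuning of the coloring; and (ii) that $D[V_2\cup S]$ remains acyclic. Any cycle newly created in $D[V_2\cup S]$ must alternate between $V_2$ and $S\subseteq V(C_1)\setminus\{v\}$, and one can read off from such a cycle a sub-path connecting a vertex of $V(C_1)\setminus\{v\}$ to a vertex of $V(C_2)\setminus\{v\}$ through $V_2\setminus V(C_2)$ --- exactly the type of path whose existence the no-bridge hypothesis forbids. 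Turning this intuition into a rigorous combinatorial argument, while exploiting the minimality of $C_1$ and $C_2$ to rule out extraneous monochromatic cycles, is the heart of the proof.
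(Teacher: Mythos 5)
Your setup is correct and matches the paper's up to the point where both find two directed cycles $C_1, C_2$ through $v$ meeting only at $v$ (coming from a $2$-coloring of $D-v$). But the key step — producing the fifth, connecting dipath — is exactly where your plan breaks down, and you acknowledge this yourself. The recoloring idea (move $S\subseteq V(C_1)\setminus\{v\}$ to the other color class) faces a real obstruction: $D[V_1\cup\{v\}]$ may contain several directed cycles through $v$ that are nearly disjoint apart from $v$, so no single set $S \subseteq V(C_1)\setminus\{v\}$ need hit all of them, and "minimality of $C_1$" does not by itself resolve this. There is also no guarantee that a bridge avoiding $v$ even exists for an arbitrary $v$: removing a generic vertex of a strongly connected digraph may destroy strong connectivity and leave $V(C_1)\setminus\{v\}$ and $V(C_2)\setminus\{v\}$ with no directed path between them in $D-v$.

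The missing idea is to choose $v$ cleverly \emph{before} building the cycles. The paper invokes Theorem~\ref{critical strongly connected} (Mader's theorem on non-critical vertices) with $k=1$: since $D$ is $3$-dicritical, it is strongly connected with $\delta^+,\delta^-\ge 2$, so there exists a vertex $v$ for which $D-v$ is \emph{still} strongly connected. After fixing such a $v$, one extracts $C_1, C_2$ exactly as you do, and strong connectivity of $D-v$ immediately yields a directed path from $V(C_1)$ to $V(C_2)$ in $D-v$; taking a shortest such dipath $P$ forces $V(P)\cap V(C_i)=\{x_i\}$, and the branch vertices $v, x_1, x_2$ with $C_1[v,x_1], C_1[x_1,v], C_2[v,x_2], C_2[x_2,v], P$ give the $\bivec{K}_3-e$ subdivision. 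No recoloring is needed at all. Without the well-chosen $v$, the bridge need not exist and the argument does not close.
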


In the second main result of our paper, we show that the Mader number of every $4$-vertex tournament is $4$. 
\begin{theorem}\label{thm:dirdirac}
	For every orientation $K$ of $K_4$, we have that $\text{mader}_{\vec{\chi}}(K) = 4$. 
\end{theorem}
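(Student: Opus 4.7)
The lower bound $\text{mader}_{\vec{\chi}}(K) \ge 4$ is immediate, since no digraph on $3$ vertices can host a $4$-vertex subdivision. For the upper bound, I would argue by contradiction, choosing a minimum counterexample $D$: a $4$-dichromatic digraph of smallest order with no subdivision of some fixed orientation $K$ of $K_4$. By minimality $D$ is $4$-dicritical. Standard coloring-extension arguments then yield $\delta^+(D), \delta^-(D) \ge 3$ (if $v$ had fewer than three out-neighbors, pick a color in $[3]$ missing from $c(N^+(v))$ to extend any acyclic $3$-coloring $c$ of $D-v$, contradicting $\vec{\chi}(D)=4$), and a strong-component argument shows $D$ is strongly connected.

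The core of the argument would combine two ingredients. First, Dirac's Theorem~\ref{thm:dirac4} applied to the underlying graph of $D$, which has chromatic number at least $\vec{\chi}(D) \ge 4$, yields four branch vertices $x_1, \dots, x_4$ together with six internally vertex-disjoint \emph{undirected} paths connecting them pairwise. Second, Proposition~\ref{K_3-e} guarantees, in any $3$-dichromatic sub-digraph of $D$, a subdivision of $\bivec{K}_3 - e$, i.e., three branch vertices joined by five internally vertex-disjoint directed paths realizing two digons and a single arc. The plan is to use Dirac's theorem to pin down the four branch vertices of the sought $K$-subdivision, and then iteratively reroute the six connecting paths into directed ones of the prescribed orientations by invoking Proposition~\ref{K_3-e} on suitable auxiliary digraphs (for instance, obtained from $D$ by contracting already-used path interiors into ``supersource''/``supersink'' vertices, which should remain $3$-dichromatic by a careful counting/coloring argument).

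The proof would then split along the four non-isomorphic tournament orientations of $K_4$: the transitive tournament, the two near-transitive tournaments (a transitive triangle with an added source or sink), and the unique strongly connected tournament with score sequence $(1,1,2,2)$. For each, the high in- and out-degree bounds together with a Menger-style path-finding argument should allow us to realize the six required directed paths. The hardest case, I expect, is the strongly connected orientation: its directed $3$-cycles demand carefully synchronized routings, and since Proposition~\ref{K_3-e} supplies only five of the six paths on three vertices, closing this final gap will likely require either a delicate reduction (showing that failure to do so yields an acyclic $3$-coloring of $D$ itself, contradicting $4$-dicriticality) or an explicit construction exploiting the extra in- and out-arcs available at every dicritical vertex.
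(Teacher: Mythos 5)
Your proposal starts correctly (trivial lower bound, minimal counterexample is $4$-dicritical, $\delta^\pm\ge 3$, strongly connected), and the case split into the four non-isomorphic tournaments is the right organizing principle. But the ``core'' strategy --- taking an \emph{undirected} $K_4$-subdivision from Dirac's theorem on the underlying graph, then rerouting its six paths into directed ones of the prescribed orientations by calling Proposition~\ref{K_3-e} on auxiliary digraphs obtained by contracting already-routed path interiors --- has a genuine gap, and in fact points in a direction the paper deliberately avoids.

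The difficulty is that the undirected $K_4$-subdivision tells you nothing about arc orientations, and there is no mechanism in your sketch that actually produces directed paths in the required directions between the chosen branch vertices. The claim that the contracted auxiliary digraph ``should remain $3$-dichromatic by a careful counting/coloring argument'' is unsubstantiated: collapsing path interiors to a single supervertex can create new directed cycles and change the dichromatic number drastically, and Proposition~\ref{K_3-e} only hands you a $\bivec{K}_3-e$-subdivision somewhere in a $3$-dichromatic digraph --- not one anchored at vertices or arcs you have already committed to. More fundamentally, the paper emphasizes that the undirected chromatic number (and hence Dirac's Theorem~\ref{thm:dirac4}) is too weak a hypothesis to force directed structure, and that large in/out-degree alone does not even force a $\bivec{K}_3$-subdivision; so an approach that begins from the underlying graph and then hopes to orient paths afterwards is unlikely to close. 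The paper instead proves the three cases by different, direct means: $\vec K_4$ follows in one line from Mader's theorem that $\delta^+\ge 3$ forces a $\vec K_4$-subdivision; for $\vec K_4^s$ and $W_4^+$ it analyzes a minimal counterexample using bespoke vertex-suppression lemmas (Lemmas~\ref{lem:butterfly1}, \ref{cutvertices}, \ref{lem:3intoone}) to boost degree and connectivity, then invokes Mader's theorem on non-critical vertices and builds the subdivision explicitly via Menger-type path systems (for $\vec K_4^s$) or a careful minimal-separator argument (for $W_4^+$). None of this passes through Dirac's theorem or Proposition~\ref{K_3-e}. To repair your sketch you would need, at minimum, a concrete rerouting lemma showing how to convert undirected connections into directed ones of arbitrary prescribed orientation --- and such a lemma would essentially be the whole theorem.
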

Theorem \ref{thm:dirdirac} is a strict extension to the directed setting of Dirac's theorem on $K_4$-subdivisions (namely, Theorem~\ref{thm:dirac4}). In fact, Theorem~\ref{thm:dirac4} can be easily derived from Theorem \ref{thm:dirdirac} as follows. 
First, observe that $\vec{\chi}(\bivec{G})=\chi(G)$ for every graph $G$. 
Now, if $G$ is an undirected graph with $\chi(G) \ge 4$, then by Theorem~\ref{thm:dirdirac}, $\bivec{G}$ contains a subdivision of any orientation of $K_4$, which translates to a $K_4$-subdivision in $G$.

The rest of this paper is organized as follows. After establishing some preliminary results in Section~\ref{sec:prelim}, we prove Theorem \ref{thm:main1} in Section~\ref{sec:octi}. Section~\ref{sec:tourns} is devoted to proving Theorem \ref{thm:dirdirac}. Finally,
in Section~\ref{sec:complete} we conclude with a discussion of Mader numbers of biorientations of complete digraphs and cycles, give the proof of Proposition \ref{K_3-e}, and pose some open problems. A main focus of Section \ref{sec:complete} is on digraphs which we call {\em Mader-perfect}; these are digraphs $F$ with the property that every subdigraph $F'$ of $F$ satisfies $\text{mader}_{\vec{\chi}}(F')=v(F')$. We propose the further study of these digraphs and establish some preliminary results. 
\paragraph*{Notation.}
All digraphs considered in this paper are loopless, have no parallel edges, but are allowed to have anti-parallel pairs of edges (\emph{digons}). A directed edge (also called an arc) with tail $u$ and head $v$ is denoted by $(u,v)$. For a graph $G$, we denote by $V(G)$, $E(G)$ the vertex- and edge-set of $G$, respectively. For a digraph $D$, $V(D)$ denotes the vertex-set and $A(D)$ denotes the set of arcs; we will use the notation $v(D) = |V(D)|$ and $a(D) = |A(D)|$. For $X \subseteq V(D)$ we denote by $D[X]$ the induced subdigraph of $D$ with vertex-set $X$. For a set $X$ of vertices or arcs in $D$, we denote by $D-X$ the subdigraph obtained by deleting the objects in $X$ from $D$.
Given an undirected simple graph $G$, an \emph{orientation} of $G$ is any digraph obtained by replacing each edge $\{u,v\}$ of $G$ with (exactly) one of the arcs $(u,v)$ or $(v,u)$. Evidently, any orientation is digon-free. 
For a digraph $D$ and a vertex $v \in V(D)$, we let $N^+(v),$ $N^-(v)$ denote the out- and in-neighborhood of $v$ in $D$, and $d^+(v)$, $d^-(v)$ their respective sizes. We denote by $\delta^+(D)$, $\delta^-(D)$, $\Delta^+(D)$, $\Delta^-(D)$ the minimum or maximum  out- or in-degree of $D$, respectively. 
We use the words ``path" and ``cycle" to mean an orientation of a path or a cycle (respectively). For example,
a path $P$ in a digraph $D$ is an alternating sequence $v_1,e_1,v_2,\ldots,v_{k-1},e_{k-1},v_k$ of pairwise distinct vertices $v_1,\ldots,v_k \in V(D)$ and arcs $e_1,\ldots,e_{k-1} \in A(D)$ such that $e_i$ connects $v_{i}$ and $v_{i+1}$ (i.e., either $e_i = (v_i,v_{i+1})$ or $e_i = (v_{i+1},v_i)$). If in addition $e_i=(v_{i},v_{i+1})$ for every $i=1,\ldots,k-1$, then we say that $P$ is a \emph{directed path} or \emph{dipath} from $v_1$ to $v_k$ (a {\em $v_1$,$v_k$-dipath} for short). Given two distinct vertices $x \neq y$ on a path $P$, we denote by $P[x,y]=P[y,x]$ the subpath of $P$ with endpoints $x$ and $y$. 
A \emph{directed cycle} ({\em dicycle} for short) is a cycle with all arcs oriented consistently in one direction. 
For a {\em directed} cycle $C$ and two distinct vertices $x,y \in V(C)$, we denote by $C[x,y]$ the segment of $C$ which forms a dipath from $x$ to $y$ (note that $C[x,y] \neq C[y,x]$). A \emph{closed directed walk} is an alternating sequence $v_0,e_0,v_1,\ldots,v_{k-1},e_{k-1},v_k=v_0$ of vertices and arcs such that $e_i=(v_i,v_{i+1})$ for all $i$. 
A digraph $D$ is called \emph{weakly connected} (or just {\em connected}) if every two vertices can be connected by a path (i.e., if the underlying undirected graph is connected); and it is called \emph{strongly connected} if for every ordered pair $(x,y) \in V(D) \times V(D)$, there exists an $x$,$y$-dipath in $D$. The maximal strongly connected subgraphs of a digraph $D$ induce a partition of $V(D)$ and are called the \emph{strong components} of $D$. For a natural number $k \in \mathbb{N}$, a digraph is called \emph{strongly $k$-vertex-connected} (resp. \emph{strongly $k$-arc-connected}) if for every subset $K$ of at most $k-1$ vertices (resp. arcs), the digraph $D-K$ is strongly connected. The notions of {\em weak $k$-vertex-connectivity} and {\em weak $k$-arc-connectivity} are defined analogously. An {\em in-arborescence} is a directed rooted tree in which all arcs are directed towards the root. 

\section{Preliminaries}\label{sec:prelim}
In this section we gather a number of definitions, observations and auxiliary results about the dichromatic number and about subdivisions in digraphs which will be used in the course of the paper.
We start by observing that $\text{mader}_{\vec{\chi}}$ is subadditive with respect to taking disjoint unions.  
\begin{observation}\label{subadditivity}
	Let $F$ be the disjoint union of two digraphs $F_1,F_2$. Then $$\text{mader}_{\vec{\chi}}(F) \leq \text{mader}_{\vec{\chi}}(F_1) + \text{mader}_{\vec{\chi}}(F_2).$$
\end{observation}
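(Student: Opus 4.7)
The plan is to use a standard ``extract-a-critical-subgraph'' argument, exploiting the fact that the dichromatic number is subadditive under vertex partitions and decreases by at most one upon deletion of a vertex. Set $k_i = \text{mader}_{\vec{\chi}}(F_i)$ for $i=1,2$, and let $D$ be any digraph with $\vec{\chi}(D) \ge k_1 + k_2$. The goal is to locate inside $D$ two vertex-disjoint subdivisions, one of $F_1$ and one of $F_2$, which together constitute a subdivision of $F = F_1 \sqcup F_2$.

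First I would choose an induced subdigraph $H \subseteq D$ that is vertex-minimal with the property $\vec{\chi}(H) \ge k_1$. Because deleting a vertex from any digraph decreases its dichromatic number by at most $1$ (simply color the removed vertex with a new color), minimality forces $\vec{\chi}(H) = k_1$; indeed, for any $v \in V(H)$, $\vec{\chi}(H-v) < k_1$ implies $\vec{\chi}(H) \le k_1$. Since $\vec{\chi}(H) \ge k_1 = \text{mader}_{\vec{\chi}}(F_1)$, the definition of the Mader number yields a subdivision of $F_1$ in $H$.

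Next I would control the complement. The dichromatic number is subadditive with respect to any vertex partition: concatenating an acyclic coloring of $D[V(H)] = H$ with an acyclic coloring of $D - V(H)$ using a disjoint set of colors gives an acyclic coloring of $D$, because each color class is contained in one side and hence induces the same (acyclic) subdigraph in $D$ as it does in its side. Therefore
\[
k_1 + k_2 \le \vec{\chi}(D) \le \vec{\chi}(H) + \vec{\chi}(D - V(H)) = k_1 + \vec{\chi}(D - V(H)),
\]
so $\vec{\chi}(D - V(H)) \ge k_2 = \text{mader}_{\vec{\chi}}(F_2)$. Applying the definition of $\text{mader}_{\vec{\chi}}(F_2)$ again, $D - V(H)$ contains a subdivision of $F_2$, and this subdivision is automatically vertex-disjoint from the subdivision of $F_1$ found inside $H$. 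Their union is a subdivision of $F$ in $D$, proving $\text{mader}_{\vec{\chi}}(F) \le k_1 + k_2$.

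There is no real obstacle here; the only thing to be careful about is taking $H$ as an \emph{induced} subdigraph, so that $H = D[V(H)]$ and the subadditivity inequality is clean, and verifying that the two subdivisions obtained really live on disjoint vertex sets, which is immediate since one sits in $V(H)$ and the other in $V(D) \setminus V(H)$.
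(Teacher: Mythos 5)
Your proof is correct and takes essentially the same approach as the paper: pick a vertex-minimal induced subdigraph with dichromatic number exactly $k_1$, find the $F_1$-subdivision there, and use subadditivity of $\vec{\chi}$ under vertex partitions to force $\vec{\chi} \ge k_2$ on the complement, yielding a disjoint $F_2$-subdivision. Your write-up is somewhat more careful than the paper's (which contains a small typo, writing $\text{mader}_{\vec{\chi}}(D[A_i])$ where $\vec{\chi}(D[A_i])$ is clearly intended), but the argument is identical.
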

\begin{proof}
	For convenience, put $k_i := \text{mader}_{\vec{\chi}}(F_i)$, $i = 1,2$. Let $D$ be a digraph with dichromatic number at least $k_1 + k_2$. Let $A_1 \subseteq V(D)$ be such that $\text{mader}_{\vec{\chi}}(D[A_1]) = k_1$ (such a set $A_1$ can be obtained by repeatedly deleting vertices as long as the dichromatic number of the current digraph is strictly larger than $k_1$). Put $A_2 := V(D) \setminus A_1$. Then $\text{mader}_{\vec{\chi}}(D[A_2]) \geq k_2$, for otherwise one could color $D$ with less than $k_1 + k_2$ colors. By our choice of $k_i$, we get that $D[A_i]$ contains a subdivision of $F_i$ for each $i = 1,2$. It follows that $D$ contains a subdivision of $F$, as required.  
\end{proof}

Let $k \in \mathbb{N}$. A digraph $D$ is called \emph{$k$-dicritical}, if $\vec{\chi}(D)=k$, but $\vec{\chi}(D')<k$ for all proper subdigraphs $D' \subsetneq D$.
\begin{lemma}\label{mindegree}
Let $D$ be $k$-dicritical. Then $\delta^+(D), \delta^-(D) \ge k-1$.
\end{lemma}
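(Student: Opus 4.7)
The plan is to argue by contradiction in the standard way, mimicking the classical proof that every $k$-critical graph has minimum degree at least $k-1$, but replacing the list-color/greedy argument with an acyclicity argument appropriate to the dichromatic setting.

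First I would suppose, for the sake of contradiction, that some vertex $v \in V(D)$ has $d^+(v) \leq k-2$. By $k$-dicriticality of $D$, the proper subdigraph $D - v$ satisfies $\vec{\chi}(D - v) \leq k - 1$, so it admits an acyclic coloring $c : V(D) \setminus \{v\} \to [k-1]$. Since $|N^+(v)| = d^+(v) \leq k-2$, the pigeonhole principle yields a color $i \in [k-1]$ that is not assigned to any vertex of $N^+(v)$. I would then extend $c$ to $v$ by setting $c(v) := i$, obtaining a candidate $(k-1)$-coloring of all of $D$.

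The key step is verifying that this extension remains acyclic. The only color class whose induced subdigraph has changed is $c^{-1}(i)$, and the change consists solely of adding the vertex $v$. By the choice of $i$, no out-neighbor of $v$ lies in $c^{-1}(i)$, so $v$ has out-degree $0$ in the subdigraph $D[c^{-1}(i)]$. A vertex of out-degree $0$ cannot lie on any directed cycle, and since $D[c^{-1}(i) \setminus \{v\}]$ was already acyclic by hypothesis, no new dicycle can be created by adding $v$. Thus the extended coloring is acyclic, which contradicts $\vec{\chi}(D) = k$. Therefore $\delta^+(D) \geq k-1$.

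The argument for $\delta^-(D) \geq k - 1$ is entirely symmetric: if some vertex $v$ has $d^-(v) \leq k-2$, take an acyclic $(k-1)$-coloring of $D - v$, pick a color not used on $N^-(v)$, and observe that $v$ then has in-degree $0$ in its color class and hence cannot lie on any dicycle there. I expect no serious obstacle here; the only subtlety is making explicit the elementary observation that a vertex of in-degree or out-degree $0$ in a subdigraph cannot participate in any directed cycle of that subdigraph, which is what allows the greedy extension argument to go through in the dichromatic setting just as it does for ordinary chromatic number.
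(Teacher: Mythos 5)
Your proof is correct and follows essentially the same argument as the paper: take an acyclic $(k-1)$-coloring of $D-v$, assign $v$ a color missing from $N^+(v)$ so that $v$ has out-degree $0$ in its color class and hence lies on no monochromatic dicycle, and handle $\delta^-$ by symmetry. The only cosmetic difference is that the paper formalizes the in-degree case via the arc-reversal digraph $\revvec{D}$ rather than rerunning the argument.
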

\begin{proof}
Since the reversal of all arcs preserves the $k$-dicriticality of $D$, it suffices to show that $\delta^+(D) \ge k-1$. Suppose towards a contradiction that there exists some $v \in V(D)$ such that $d^+(v)<k-1$. By assumption, $D-v$ admits an acyclic coloring with color-set $\{1,\ldots,k-1\}$. We can extend this to a $(k-1)$-coloring of $D$ by assigning to $v$ a color in $\{1,\ldots,k-1\}$ that does not appear on $N^+(v)$. Then the resulting coloring is an acylic $(k-1)$-coloring of $D$ (since no monochromatic directed cycle can pass through $v$), in contradiction to our assumption \nolinebreak that \nolinebreak $\vec{\chi}(D)=k$.
\end{proof}

\begin{lemma}\label{strong connectivity}
	Let $D$ be $k$-dicritical. Then $D$ is strongly connected.
\end{lemma}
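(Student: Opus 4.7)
The plan is to proceed by contradiction. Assume $D$ is $k$-dicritical but not strongly connected. I will exhibit an acyclic $(k-1)$-coloring of $D$, contradicting $\vec{\chi}(D)=k$.

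Because $D$ is not strongly connected, its condensation (the DAG of strong components) has at least two vertices. Pick any strong component that is a sink in the condensation, and let $B \subseteq V(D)$ be its vertex set; set $A := V(D) \setminus B$. Both $A$ and $B$ are nonempty, and by the choice of $B$ as a sink component, no arc of $D$ goes from $B$ to $A$ (every arc with one endpoint in $B$ and one in $A$ is directed from $A$ to $B$). The key structural point I will use is that any closed directed walk (in particular, any directed cycle) in $D$ must lie entirely in $D[A]$ or entirely in $D[B]$: if it had vertices in both, then traversing it would at some point force an arc from $B$ to $A$, which does not exist.

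Since $D$ is $k$-dicritical, both $D[A]$ and $D[B]$ are proper subdigraphs, so they each admit acyclic colorings $c_A\colon A \to [k-1]$ and $c_B\colon B \to [k-1]$. Define $c\colon V(D) \to [k-1]$ by $c(v) = c_A(v)$ for $v \in A$ and $c(v) = c_B(v)$ for $v \in B$. Any monochromatic directed cycle of $D$ under $c$ would be a directed cycle in $D$, hence, by the observation above, would lie entirely in $D[A]$ or entirely in $D[B]$; but then it would be a monochromatic dicycle under $c_A$ or $c_B$, contradicting their acyclicity. Therefore $c$ is an acyclic $(k-1)$-coloring of $D$, contradicting $\vec{\chi}(D) = k$, so $D$ must have been strongly connected after all.

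I do not expect a genuine obstacle here; the only subtlety is selecting $B$ correctly so that no arc crosses ``backwards.'' Using a sink component of the condensation handles this cleanly. In fact the same argument generalizes verbatim to any partition $V(D) = A \sqcup B$ obtained as a downward-closed/upward-closed split in the condensation, which is a useful lemma to keep in mind for later sections.
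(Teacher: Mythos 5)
Your proof is correct and follows essentially the same route as the paper: partition $V(D)$ into nonempty $A,B$ with no arcs from $B$ to $A$, color each side acyclically with $k-1$ colors using dicriticality, and observe that the union is acyclic since no dicycle crosses the cut. The only difference is that you spell out how to produce the partition via a sink component of the condensation, whereas the paper simply asserts such a partition exists; the underlying argument is identical.
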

\begin{proof}
	Assume, for the sake of contradiction, that $D$ is not strongly connected. Then there is a partition $V(D) = A \cup B$ such that $A$ and $B$ are non-empty and there are no arcs going from $B$ to $A$. Since $D$ is $k$-dicritical, both $D[A]$ and $D[B]$ have an acyclic $(k-1)$-coloring. But putting these colorings together is an acyclic $(k-1)$-coloring of $D$, since $D$ contains no directed cycles which intersect both $A$ and $B$. Thus, we have arrived at a contradiction to $\vec{\chi}(D)=k$.
      \end{proof}

Given a digraph $D$ and two (not necessarily disjoint) subsets $A, B \subseteq V(D)$, an \emph{$A$-$B$-dipath} is a directed path in $D$ which starts in a vertex of $A$, ends in a vertex of $B$, and is internally vertex-disjoint from $A \cup B$ (here we allow paths consisting of a single vertex belonging to $A \cap B$). Similarly, for a vertex $u \in V(D)$, by a {\em $u$-$A$-dipath} or an {\em $A$-$u$-dipath}, respectively, we mean a $\{u\}$-$A$ or an $A$-$\{u\}$-dipath according to the above definition.
We will frequently use the following well-known variants of Menger's Theorem for directed graphs.
\begin{theorem}\label{setmenger}
Let $D$ be a digraph and $k \in \mathbb{N}$.
\begin{itemize}
\item[(i)] If $D$ is strongly $k$-vertex-connected, then for any two subsets $A, B \subseteq V(D)$ such that $|A|,|B| \ge k$, there are $k$ pairwise vertex-disjoint $A$-$B$-paths. 
\item[(ii)] If $v \in V(D)$ and $A \subseteq V(D) \setminus \{v\}$, then either there are $k$ different $v$-$A$-dipaths which pairwise only intersect at $v$, or there is a subset $K \subseteq V(D) \setminus \{v\}$ such that $|K|<k$ and such that in $D-K$ there is no dipath starting at $v$ and ending in $A$. 
\end{itemize}
\end{theorem}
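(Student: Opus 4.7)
My plan is to deduce both parts from the classical vertex-version of Menger's theorem for digraphs (for two non-adjacent vertices $s \neq t$ in a digraph $H$: the maximum number of internally vertex-disjoint $s$-$t$-dipaths equals the minimum size of a separator in $V(H) \setminus \{s,t\}$), by augmenting $D$ with dummy endpoints and then translating the Menger dichotomy back into the language of the lemma.

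For part (ii), I would form $D^+$ by adjoining a new vertex $t \notin V(D)$ as a sink, with incoming arcs $(a,t)$ for every $a \in A$. Since $v \notin A$, the vertices $v$ and $t$ are non-adjacent in $D^+$, so Menger's theorem applied to the pair $(v,t)$ outputs either $k$ internally vertex-disjoint $v$-$t$-dipaths $Q_1,\dots,Q_k$ in $D^+$, or a separator $K \subseteq V(D^+) \setminus \{v,t\} = V(D) \setminus \{v\}$ of size less than $k$. In the first case, let $a_i'$ denote the first vertex of $A$ encountered along $Q_i$; this $a_i'$ exists and is internal to $Q_i$ (the vertex immediately preceding $t$ lies in $A$, while the endpoints $v,t$ do not), and the truncated dipaths $Q_i[v,a_i']$ are $v$-$A$-dipaths that meet pairwise only at $v$, since the pairwise intersections of the $Q_i$'s are $\{v,t\}$ and $t$ lies on no truncated piece. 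In the second case, any $v$-$A$-dipath in $D - K$ would extend via an arc $(a,t)$ to a $v$-$t$-dipath in $D^+ - K$, contradicting that $K$ separates $v$ from $t$; so $K$ is the required separator.

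For part (i), I would augment on both sides: add a source $s$ with arcs $(s,a)$ for every $a \in A$ and a sink $t$ with arcs $(b,t)$ for every $b \in B$, forming $D^*$ in which $s$ and $t$ are non-adjacent. I would first verify that no $s$-$t$-separator $K \subseteq V(D^*) \setminus \{s,t\} = V(D)$ in $D^*$ has size below $k$: given such $K$, the inequalities $|A|,|B| \geq k > |K|$ leave $A \setminus K$ and $B \setminus K$ non-empty, and strong $k$-vertex-connectivity of $D$ makes $D - K$ strongly connected, so any $a \in A \setminus K$ and $b \in B \setminus K$ can be joined by a dipath in $D - K$, which extends to an $s$-$t$-dipath of $D^* - K$. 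Menger's theorem therefore produces $k$ internally vertex-disjoint $s$-$t$-dipaths $Q_1,\dots,Q_k$ in $D^*$. To extract $A$-$B$-dipaths in $D$ that are internally disjoint from $A \cup B$, I would trim each $Q_i$: let $a_i'$ be the last vertex of $A$ appearing on $Q_i$ (between $s$ and $t$) and $b_i'$ be the first vertex of $B$ on $Q_i$ at or after $a_i'$, then set $P_i := Q_i[a_i',b_i']$. Both $a_i'$ and $b_i'$ exist because the vertex immediately after $s$ on $Q_i$ lies in $A$ and the vertex immediately before $t$ lies in $B$.

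The only delicate step is this trimming in part (i): Menger's paths may visit many vertices of $A \cup B$ internally, and I need a rule that simultaneously produces $A$-$B$-dipaths internally disjoint from $A \cup B$ and preserves pairwise vertex-disjointness across the $k$ chosen paths. The ``last $A$, then first subsequent $B$'' prescription achieves the former by construction, and the latter automatically, because each $P_i$ is a subdipath of $Q_i - \{s,t\}$ and the sets $V(Q_i) \setminus \{s,t\}$ are pairwise disjoint. With the subpath extraction in place, both parts reduce to bookkeeping around classical Menger.
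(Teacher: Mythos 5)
Your proof is correct and follows the same essential strategy as the paper: both parts reduce to Menger's theorem for digraphs. The minor difference is in the reduction. For part (i), the paper invokes the set-version of Menger directly (observing that strong $k$-vertex-connectivity plus $|A|,|B|\ge k$ forces any $A$-$B$-separator to have size at least $k$), whereas you rederive this via the dummy source/sink augmentation and the two-vertex version. For part (ii), the paper reduces to disjoint $N^+(v)$-$A$-dipaths in $D-v$, whereas you adjoin a dummy sink $t$ behind $A$ and apply the two-vertex version to $(v,t)$. Your route has the advantage of being more self-contained and of making the trimming step explicit (extracting, from the Menger paths, the subpaths that are internally disjoint from $A\cup B$ — via the ``last $A$, then first subsequent $B$'' rule for (i) and ``first $A$'' rule for (ii)), a point the paper's terse proof passes over. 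The key insight — that strong $k$-vertex-connectivity together with $|A|,|B|\ge k$ guarantees an $A$-$B$-dipath survives removal of any fewer than $k$ vertices — is the same in both.
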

\begin{proof}\noindent
For (i), note that in a strongly $k$-vertex connected digraph the smallest set that intersects every $A$-$B$-path is of size at least $k$. Hence the existence of a family of $k$ pairwise vertex-disjoint $A$-$B$-paths is implied by Menger's Theorem~\cite{mengeroriginal} (cf.~\cite{Goring}).

Let us now derive (ii). If there exists no family of $k$ different $v$-$A$-dipaths that only intersect in $v$, then there is no family of $k$ disjoint $N^+(v)$-$A$-dipaths in $D-v$. Menger's Theorem then ensures the existence of a set $K\subseteq V(D)\setminus\{ v \}$ of less than $k$ vertices such that there is no $N^+(v)$-$A$-dipath in $D-(\{v\} \cup K)$. Then in $D-K$ there is no $v$-$A$-dipath, since all such dipaths go through $N^+(v)$.
\end{proof}
We will further need the following two deep results by Mader on so-called non-critical vertices and on subdivisions in digraphs of sufficiently large out-degree.
\begin{theorem}[\cite{Mader_critical_connectivity}, see also Section 7.11 in~\cite{BJ-G}]\label{critical strongly connected}
Let $k \in \mathbb{N}$, and let $D$ be a strongly $k$-vertex-connected digraph with $\delta^+(D),\delta^-(D) \geq 2k$. Then there is $v \in V(D)$ such that $D - v$ is (also) strongly $k$-vertex-connected. 
\end{theorem}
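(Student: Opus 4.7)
The plan is to argue by contradiction: assume every $v \in V(D)$ is critical, meaning $D-v$ fails to be strongly $k$-vertex-connected. Since $D$ itself is strongly $k$-connected, this gives, for every $v$, a set $S_v$ of exactly $k-1$ vertices such that $T_v := \{v\}\cup S_v$ is a $k$-separator of $D$. Because $D-T_v$ is not strongly connected, its strong components are linearly ordered by reachability; choosing a down-set and its complement yields a partition $V(D)=A_v\sqcup T_v\sqcup B_v$ with $A_v,B_v\neq\emptyset$ and no arc from $A_v$ to $B_v$ in $D$. I will call such a triple a \emph{$k$-separation} and refer to $A_v$ as its \emph{positive fragment}.

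First I would use the minimum degree hypotheses to force fragments to be large. If $(A,T,B)$ is any $k$-separation, then for every $a\in A$ we have $N^+(a)\subseteq (A\setminus\{a\})\cup T$, so $2k\le d^+(a)\le |A|-1+k$, giving $|A|\ge k+1$. Symmetrically, $\delta^-(D)\ge 2k$ yields $|B|\ge k+1$. In particular, every $k$-separator $T$ has both associated fragments of size at least $k+1$.

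Next I would select a $k$-separation $(A,T,B)$ of $D$ with $|A|$ as small as possible (and, among such, with $|B|$ as small as possible as a tiebreak). Pick any $v\in A$; by criticality there is a second $k$-separation $(A',T',B')$ with $v\in T'$. Consider the $3\times 3$ cell decomposition of $V(D)$ obtained by intersecting $\{A,T,B\}$ with $\{A',T',B'\}$. The two ``no-arc'' conditions combine to forbid arcs from $A\cap A'$ to $B\cup B'$ beyond what passes through $T\cup T'$; by the standard uncrossing (submodularity) argument for vertex-separators, at least one of the sets $A\cap A'$ or $A\cap B'$, together with an appropriate subset of $(T\cup T')\setminus(A\cup B)$, yields a new $k$-separation whose positive fragment is a proper subset of $A$. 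Since $v\in A\cap T'$, the candidate fragment strictly loses $v$, so its size is at most $|A|-1$, contradicting the minimal choice of $|A|$.

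The main obstacle is the uncrossing step itself: unlike the edge-connectivity setting, the submodular inequality for directed vertex-separators is not a one-liner, and one must track the nine cells carefully to verify that the resulting separator has size at most $k$ and that both new sides remain non-empty. This is precisely where the hypotheses $\delta^+(D),\delta^-(D)\ge 2k$ will be invoked: via the bound $|A|,|B|\ge k+1$ they guarantee enough mass in each of $A\cap A',A\cap B',B\cap A',B\cap B'$ to prevent the uncrossed positive side from collapsing, and they control where boundary vertices can send their out- and in-arcs. Executing this case analysis (including the symmetric uncrossing with respect to the negative fragments in case $A\cap A'$ is empty) completes the contradiction and hence the proof.
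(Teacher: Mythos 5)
The paper does not prove this theorem --- it is imported as a black box from Mader (see also Bang-Jensen--Gutin, Section 7.11) --- so the only question is whether your blind attempt is sound. The strategy you sketch (degree bound forces fragments of size $\geq k+1$; take a minimum fragment $A$; intersect its separator with a separator through some $v\in A$; uncross) is indeed the standard way Mader-type ``non-critical vertex'' theorems are approached, so the high-level plan is right. But the argument as written has a genuine gap at exactly the step you flag as ``bookkeeping,'' and it is not bookkeeping.

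Concretely: in the directed vertex setting, intersecting the two separations $(A,T,B)$ and $(A',T',B')$ gives only two valid uncrossed separations, namely the one with positive fragment $A\cap A'$ and separator $(A\cap T')\cup(T\cap A')\cup(T\cap T')$, and the one with negative fragment $B\cap B'$ and separator $(B\cap T')\cup(T\cap B')\cup(T\cap T')$. Your claim that ``at least one of $A\cap A'$ or $A\cap B'$ \ldots yields a new $k$-separation'' is false as stated: $A\cap B'$ is not confined --- its out-arcs land anywhere in $A\cup T$ and its in-arcs can originate anywhere in $T'\cup B'$, so neither orientation bounds it by a small separator. The only ``small'' candidates are the two corners $A\cap A'$ and $B\cap B'$, whose separator sizes sum to $|T|+|T'|=2k$. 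So either both equal $k$, or one is $>k$ and the other is $<k$, in which case $k$-connectivity forces the corresponding corner fragment to be empty. This leaves two problem cases your sketch does not resolve: (i) $A\cap A'=\emptyset$, where the positive-corner uncrossing degenerates and yields nothing; and (ii) $A\cap A'\neq\emptyset$ but its separator has size $>k$, forcing $B\cap B'=\emptyset$, again killing the usable corner. In case (i) your proposed fallback ``symmetric uncrossing with respect to the negative fragments'' produces the negative fragment $B\cap B'$, but there is no reason for $|B\cap B'|$ to be smaller than your chosen minimum $|A|$ (the tiebreak on $|B|$ does not fix this, since $B\cap B'$ belongs to a different separation). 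Handling these cases is precisely where Mader's proof does substantial extra work --- it is not a routine nine-cell tally --- and the $2k$ degree hypothesis has to be used well beyond the $|A|,|B|\geq k+1$ bound you extracted. As it stands, the proof is an accurate description of the entry point to Mader's argument, but the core of the proof is missing.
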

\begin{theorem}[\cite{Mader_trans_4}]\label{subdivtrans4}
Let $D$ be a digraph such that $\delta^+(D) \ge 3$. Then $D$ contains a subdivision of $\vec{K}_4$, the transitive tournament of order $4$.
\end{theorem}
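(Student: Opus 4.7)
The plan is to argue from a minimal counterexample and extract enough structure from a longest dipath to construct a $\vec{K}_4$-subdivision directly. First, take a digraph $D$ with $\delta^+(D) \ge 3$ that contains no $\vec{K}_4$-subdivision and has the fewest arcs. Arc-minimality forces every arc $(u,v)$ to satisfy $d^+(u) = 3$ (otherwise we could delete it and preserve $\delta^+ \ge 3$), so $D$ is $3$-out-regular. If $|V(D)| = 4$ then $D = \bivec{K}_4$ and we are done, so we may assume $|V(D)| \ge 5$.

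Next, consider a longest dipath $P = v_0 v_1 \cdots v_k$ in $D$. Since $P$ cannot be extended beyond $v_k$, all three out-neighbors of $v_k$ lie on $P$: say $N^+(v_k) = \{v_a, v_b, v_c\}$ with $a < b < c < k$. This gives a natural candidate set of four branch vertices $\{v_k, v_a, v_b, v_c\}$ for a $\vec{K}_4$-subdivision with $v_k$ as source. The arcs $(v_k, v_a), (v_k, v_b), (v_k, v_c)$ supply three of the six required internally vertex-disjoint dipaths, and the segments $P[v_a, v_b], P[v_b, v_c]$ supply two more (the dipaths $v_a \to v_b$ and $v_b \to v_c$), for five out of the six.

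The remaining dipath to construct is one from $v_a$ to $v_c$ which is internally disjoint from everything already used, i.e.\ avoids the set $S := \{v_k, v_b\} \cup (V(P[v_a,v_c]) \setminus \{v_a, v_c\})$. I would investigate reachability from $v_a$ to $v_c$ in $D - S$ using Menger-type arguments in the spirit of Theorem~\ref{setmenger}: if $v_c$ is reachable from $v_a$, we win directly. If not, the set of vertices in $D - S$ reachable from $v_a$ must send all its out-arcs into $S$, and the $3$-out-regularity, combined with the longest-path maximality, tightly restricts where these arcs can land. One would then try to exploit this restriction to produce either a direct contradiction or an alternative set of four candidate branch vertices, for instance by reapplying the longest-path argument starting from a carefully chosen vertex pinned down by the structure of $S$.

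The main obstacle, and the heart of Mader's original argument, is precisely this final case analysis: ensuring that among the several candidate configurations produced by repeated applications of the longest-path idea, at least one must admit six internally vertex-disjoint dipaths in the correct six directions of $\vec{K}_4$. The difficulty is that we have no global connectivity hypothesis beyond $\delta^+ \ge 3$, so local connectivity must be bootstrapped from out-degree information alone---this is what makes the proof substantially more delicate than the classical Dirac-style argument for $K_4$-subdivisions in undirected graphs, where Theorem~\ref{thm:density}-type density tools are available.
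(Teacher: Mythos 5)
The paper does not prove this statement at all: it is a black-box citation to Mader's 1996 paper (reference \cite{Mader_trans_4}), so there is no internal proof to compare your attempt against. The relevant question is therefore only whether your sketch stands on its own, and it does not. Your setup is fine as far as it goes: an arc-minimal counterexample is $3$-out-regular, the $4$-vertex case is $\bivec{K}_4$, a longest dipath $P = v_0\cdots v_k$ forces $N^+(v_k) = \{v_a,v_b,v_c\} \subseteq V(P)$, and the arcs $(v_k,v_a),(v_k,v_b),(v_k,v_c)$ together with $P[v_a,v_b]$ and $P[v_b,v_c]$ give five of the six required branch-paths. But the sixth path, a $v_a$--$v_c$ dipath internally disjoint from $S := \{v_k\} \cup (V(P[v_a,v_c])\setminus\{v_a,v_c\})$, is exactly where the theorem lives, and you never construct it.

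The gap is not cosmetic. In the obstructed case, the out-closed set $R$ reachable from $v_a$ in $D-S$ can degenerate to $R = \{v_a\}$: nothing prevents $N^+(v_a) \subseteq \{v_{a+1},\dots,v_{c-1}\}\cup\{v_k\}$, since all three of $v_a$'s out-neighbors may lie on $P[v_a,v_c]$ or be $v_k$ itself, and the longest-path maximality gives you no control over this. At that point ``reapply the longest-path argument from a carefully chosen vertex'' is not an argument --- you have not said which vertex, what quantity that new longest path optimizes, why the new candidate branch quadruple is disjoint from or compatible with the old one, or why the process terminates rather than cycling. You candidly flag this yourself (``the main obstacle \dots is precisely this final case analysis''), which is accurate, but it means the proposal is a setup for a proof rather than a proof. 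To turn it into one you would need, at minimum, a well-defined extremal choice of dipath (or of the triple $(v_a,v_b,v_c)$) together with a closing argument that converts the absence of the sixth path into a contradiction with $3$-out-regularity --- precisely the content of Mader's paper that is being cited here.
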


\section{Oriented cacti and bioriented forests}\label{sec:octi}
In this section we prove Theorem~\ref{thm:main1}. 
The main step in the proof consists of showing that if $F^\ast$ is a digraph obtained from a digraph $F$ via ear addition (i.e., the operation described in the second item of Definition \ref{def:octi}),
then $\text{mader}_{\vec{\chi}}(F^\ast) \le \text{mader}_{\vec{\chi}}(F)+k$ where $k$ is the number of newly added vertices. This is done in the \nolinebreak following \nolinebreak theorem. 

\begin{theorem}\label{kempe}
	Let $F$ be a digraph and let $v_0 \in V(F)$. 
	Let $P = v_1,\ldots,v_k$ be an orientation of a path disjoint from $V(F)$.
	Let $F^\ast$ be a digraph obtained from $F$ by adding the path $P$, both arcs $(v_0,v_1),(v_1,v_0)$, and exactly one of the arcs $(v_0,v_k),(v_k,v_0)$. 
	Then $\text{mader}_{\vec{\chi}}(F^\ast) \le \text{mader}_{\vec{\chi}}(F)+k$.
\end{theorem}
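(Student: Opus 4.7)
Let $m := \text{mader}_{\vec{\chi}}(F)$. We will show that any digraph $D$ with $\vec{\chi}(D) \ge m+k$ contains a subdivision of $F^\ast$. By passing to a subdigraph, we may assume $D$ is $(m+k)$-dicritical; then by Lemmas~\ref{strong connectivity} and~\ref{mindegree}, $D$ is strongly connected with $\delta^+(D), \delta^-(D) \ge m+k-1$.

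The approach is to fix a vertex $u \in V(D)$ intended to serve as the branch vertex for $v_0$, and exploit an acyclic coloring of $D - u$ to produce, in parallel, a subdivision of $F$ with $u$ playing the role of $v_0$ and a subdivision of the ear rooted at $u$. By dicriticality, $D - u$ admits an acyclic coloring $c: V(D) \setminus \{u\} \to [m+k-1]$. Since $c$ cannot be extended to $D$ (otherwise $\vec{\chi}(D) \le m+k-1$), for each color $i \in [m+k-1]$ there exists a dicycle $C_i$ through $u$ with $V(C_i) \setminus \{u\} \subseteq c^{-1}(i)$; in particular, the $C_i$'s are pairwise internally vertex-disjoint.

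We now reserve $k$ of these dicycles, say $C_1, \ldots, C_k$, to build the ear. Setting $U := \bigcup_{i=1}^k (V(C_i) \setminus \{u\})$, the restriction $c|_U$ is an acyclic $k$-coloring of $D[U]$; by sub-additivity of $\vec{\chi}$ with respect to vertex partitions, we get $\vec{\chi}(D - U) \ge \vec{\chi}(D) - k \ge m$. Therefore $D - U$, which still contains $u$, harbors a subdivision $S$ of $F$. From the $k$ reserved dicycles we construct a subdivision of the ear at $u$: split one dicycle $C_{i_1}$ at a chosen interior vertex $w_1$ to realize the digon $(v_0, v_1), (v_1, v_0)$ as two internally vertex-disjoint dipaths $u \to w_1$ and $w_1 \to u$; then choose branch vertices $w_2, \ldots, w_k$ along the remaining dicycles and realize each arc of the oriented path $P$ and the chosen $v_0$-$v_k$ arc as an internally vertex-disjoint dipath, using sub-dipaths of the $C_i$'s in the prescribed directions. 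Combining $S$ with the resulting ear yields a subdivision of $F^\ast$ in $D$.

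The main technical obstacle is to ensure that $u$ is actually the $v_0$-branch vertex of $S$, since the bound $\vec{\chi}(D - U) \ge m$ only guarantees \emph{some} $F$-subdivision in $D - U$. We expect this coordination to be achieved by an extremal choice of the pair $(u, c)$ minimizing a suitable potential, possibly combined with Kempe-chain swaps between the ``ear-colors'' $\{1, \ldots, k\}$ and the ``$F$-colors'' $\{k+1, \ldots, m+k-1\}$ so as to force any $F$-subdivision in $D - U$ to pass through $u$ in the required role. A secondary combinatorial difficulty, arising in the ear construction, is to realize every possible orientation of the path $P$ and of the $v_0$-$v_k$ arc from the monochromatic dicycles $C_i$; this should be handled by a routing argument exploiting that each $C_i$ is a complete dicycle that may be split at any interior vertex and thereby traversed in either orientation relative to $u$.
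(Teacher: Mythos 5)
Your proposal has two genuine gaps; you acknowledge the first but not the second. (i) You fix a vertex $u$ in advance and need the $F$-subdivision found in $D-U$ to pass through $u$ with $u$ playing the role of $v_0$; the bound $\vec{\chi}(D-U)\ge m$ gives only \emph{some} $F$-subdivision somewhere in $D-U$, and you offer no mechanism (beyond a vague hope of an extremal potential) to achieve the coordination. (ii) More seriously, the ear construction from the $k$ monochromatic dicycles $C_1,\ldots,C_k$ through $u$ cannot work. The ear of $F^\ast$ is a cycle on $v_0,v_1,\ldots,v_k$ with the edge $v_0v_1$ bioriented; subdividing it at $u$ requires, besides the two antiparallel $u$--$w_1$ paths and the $u$--$w_k$ path, directed paths between consecutive branch vertices $w_1,w_2,\ldots,w_k$ that avoid $u$. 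But your $C_i$ are pairwise vertex-disjoint except at $u$, so every walk assembled from sub-dipaths of distinct $C_i$'s must pass through $u$ --- which is already the branch vertex for $v_0$ and cannot be an internal vertex of a subdivision path. The fan-of-dicycles-through-a-vertex idea (which does appear in the paper, e.g.\ in the proofs of $\text{mader}_{\vec\chi}(\vec{K}_4^s)=4$ and Proposition~\ref{K_3-e}) is the wrong shape for an ear; it gives spokes from $u$, not a cycle through $u$.

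The paper's proof sidesteps both problems by reversing the order and never committing to $u$ in advance. It fixes an acyclic $(M+k)$-coloring $c_0$ of all of $D$ chosen to maximize $|c_0^{-1}(\{1,\ldots,k\})|$, sets $Y_2 := c_0^{-1}(\{k+1,\ldots,M+k\})$ (which has $\vec\chi(D[Y_2])=M$), finds an $F$-subdivision $S\subseteq D[Y_2]$, and only then reads off the vertex $x_0$ playing the role of $v_0$ --- so there is nothing to coordinate. The ear is then built entirely inside $Y_1 := c_0^{-1}(\{1,\ldots,k\})$, which is disjoint from $V(S)$. The construction material is not a fan of monochromatic dicycles through $x_0$, but rather a chain of \emph{two-colored} strongly connected pieces: re-coloring $Y_1$ to lexicographically minimize $(|N^+(x_0)\cap c^{-1}(1)|,\ldots,|N^+(x_0)\cap c^{-1}(k)|)$ and invoking the Kempe-switch Lemma~\ref{componentswitch} forces, for each $i$, an out-neighbor $x_i$ of $x_0$ in color $i$ such that $x_{i-1}$ and $x_i$ lie in the same strong component of $D[c^{-1}(\{i-1,i\})]$. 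These two-colored strong components supply the $w_{i-1}$--$w_i$ connecting paths (in whichever orientation $P$ prescribes) without ever touching $x_0$, which is exactly what your monochromatic dicycles cannot provide; and the maximality of $|c_0^{-1}(\{1,\ldots,k\})|$ separately yields the one genuinely monochromatic dicycle needed for the digon at $v_1$.
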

 

To prove Theorem \ref{kempe} we will need the following useful lemma, which describes a generalization of the idea of Kempe-switches to directed graphs.
\begin{lemma}\label{componentswitch}
Let $D$ be a digraph, $k \in \mathbb{N}$, and let $c:V(D) \rightarrow \{1,\ldots,k\}$ be an acyclic coloring of $D$. Let $i \neq j \in \{1,\ldots,k\}$, $D_{i,j}:=D[c^{-1}(\{i,j\})]$, and let $X \subseteq c^{-1}(\{i,j\})$ be the vertex set of a strong component of $D_{i,j}$. Then the coloring $c':V(D) \rightarrow \{1,\ldots,k\}$, defined by
$$c'(x):=\begin{cases} c(x) & \text{ if }x \in V(D) \setminus X, \cr j & \text{ if }x \in X \cap c^{-1}(i), \cr i & \text{ if }x \in X \cap c^{-1}(j) \end{cases}$$ is an acyclic coloring of $D$ as well.
\end{lemma}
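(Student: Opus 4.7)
The plan is to verify directly that no color class under $c'$ contains a monochromatic directed cycle, leveraging the defining property of strong components, namely, that any directed cycle of $D_{i,j}$ is either contained in $X$ or disjoint from $X$. Since $c'$ agrees with $c$ on $V(D)\setminus c^{-1}(\{i,j\})$, the color classes for colors different from $i$ and $j$ are unchanged and thus still induce acyclic subdigraphs. By the symmetry of the roles of $i$ and $j$ in the construction of $c'$, it suffices to rule out a monochromatic directed cycle of color $i$ under $c'$.

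First I would observe that if $C$ is a directed cycle in $D$ whose vertices all have $c'$-color $i$, then each vertex $v \in V(C)$ is either in $V(D) \setminus X$ with $c(v)=i$, or in $X$ with $c(v)=j$. In both cases $c(v) \in \{i,j\}$, so $V(C) \subseteq c^{-1}(\{i,j\})$, and consequently all arcs of $C$ belong to the induced subdigraph $D_{i,j}$. Hence $C$ is a directed cycle in $D_{i,j}$.

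Next I would use the strong component property: any two vertices lying on a common directed cycle in $D_{i,j}$ are mutually reachable in $D_{i,j}$, so they must belong to the same strong component. Applied to $C$, this forces either $V(C) \subseteq X$ or $V(C) \cap X = \emptyset$. In the first case every vertex of $C$ satisfies $c(v)=j$ (by the definition of $c'$ on $X$), so $C$ is monochromatic of color $j$ under $c$, contradicting the acyclicity of $c$. In the second case $c'$ and $c$ agree on $V(C)$, so $C$ is monochromatic of color $i$ under $c$, again a contradiction. Hence no such cycle exists and $c'$ is acyclic.

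The proof is essentially a bookkeeping argument, so I do not anticipate a real obstacle; the only point demanding care is the step where a directed cycle in $D_{i,j}$ is shown to sit inside a single strong component, which is just the standard fact that a directed cycle lies within one strong component of the digraph it is contained in.
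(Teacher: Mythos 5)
Your proof is correct and follows essentially the same approach as the paper's: assume a monochromatic directed cycle under $c'$, show it must lie in $D_{i,j}$, then use that a directed cycle lies entirely inside one strong component to conclude it is either contained in $X$ or disjoint from $X$, and derive a contradiction with the acyclicity of $c$ in either case.
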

\begin{proof}
  Suppose towards a contradiction that there is a directed cycle $C$ in $D$ which is monochromatic under $c'$. If $V(C) \cap X=\emptyset$, then $c$ and $c'$ agree on $V(C)$, contradicting our assumption that $c$ is an acyclic coloring of $D$. Therefore $V(C) \cap X \neq \emptyset$. Since $c'$ has only colors $i$ or $j$ on $X$, we find that $C$ is monochromatic under $c'$ either in color $i$ or $j$. This means that $V(C) \subseteq (c')^{-1}(\{i,j\})=c^{-1}(\{i,j\})$ according to the definition of $c'$. Hence, $C$ is a directed cycle in $D_{i,j}$, and since $X$ is a strong component of $D_{i,j}$, we conclude $V(C)\subseteq X$. By the definition of $c'$ the colors $i$ and $j$ are switched in $X$, so $C$ must have been monochromatic under $c$ in color $j$ or $i$. This contradicts to the fact that the coloring $c$ of $D$ is acyclic and concludes the proof. 
\end{proof}
\begin{proof}[Proof of Theorem \ref{kempe}]

First, we argue that by symmetry, it is enough to handle the case that $(v_0,v_k) \in A(F^\ast)$. For a digraph $D$, denote by $\revvec{D}$ the digraph obtained from it by reversing the orientations of all arcs, that is, $V(\revvec{D}):=V(D)$, $A(\revvec{D}):=\{(x,y) \; | \; (y,x) \in A(D)\}$. For all digraphs $D$ and $F$, we have $\vec{\chi}(D)=\vec{\chi}(\revvec{D})$ and $D$ contains a subdivision of $F$ if and only if $\revvec{D}$ contains a subdivision of $\revvec{F}$. As a consequence, we have $\text{mader}_{\vec{\chi}}(F)=\text{mader}_{\vec{\chi}}(\revvec{F})$ for every digraph $F$. Therefore, the case $(v_k,v_0) \in A(F^\ast)$ follows from the case $(v_0,v_k) \in A(F^\ast)$ via this symmetry. So for the rest of the proof we assume that $(v_0,v_k) \in A(F^\ast)$.

For brevity, in the following we put $M:=\text{mader}_{\vec{\chi}}(F)$.
Consider any given digraph $D$ such that $\vec{\chi}(D)=M+k$. We have to show that $D$ contains a subdivision of \nolinebreak $F^\ast$. 

Let us start by fixing an acyclic coloring $c_0:V(D) \rightarrow \{1,2,\ldots,M+k\}$ of $D$ that maximizes $|c_0^{-1}(\{1, \ldots, k\})|$. 
In the following, we set $Y_1:=c_0^{-1}(\{1,\ldots,k\})$ and  $Y_2:=c_0^{-1}(\{k+1,\ldots,M+k\})$. Note that $V(D)=Y_1 \cup Y_2$ is a partition of $V(D)$.
Since $c_0$ is an acyclic coloring of $D$ with $\vec{\chi}(D)$ colors, we have $\vec{\chi}(D[Y_1])=|\{1,\ldots,k\}| = k$ and $\vec{\chi}(D[Y_2])=|\{k+1,\ldots,M+k\}|= M$. 

From the definition of $M$ we conclude that there exists a subgraph $S \subseteq D[Y_2]$ which is a subdivision of $F$. In the following, let us denote by $x_0 \in V(S) \subseteq Y_2$ the vertex in this subdivision corresponding to $v_0 \in V(F)$. 

For each acyclic $k$-coloring $c:Y_1 \rightarrow \{1,\ldots,k\}$ of $D[Y_1]$, let $\textbf{v}(c) \in \mathbb{Z}^{k}$ denote the vector defined by $\textbf{v}(c)_{i}=|N^+(x_0) \cap c^{-1}(i)|$, for $i=1,\ldots,k$. Let us consider the pre-order $\prec$ on the set of acyclic $\{1,\ldots,k\}$-colorings of $D[Y_1]$, where $c_1 \prec c_2$ iff $\textbf{v}(c_1) <_{\text{lex}} \textbf{v}(c_2)$. Here $<_{\text{lex}}$ denotes the lexicographical order on $\mathbb{Z}^{k}$. 
In the following, let $c:Y_1 \rightarrow \{1,\ldots,k\}$ denote an acyclic coloring of $D[Y_1]$ that is minimal with respect to $\prec$. For $i<j \in \{1,\ldots,k\}$, let $D_{i,j}:=D[c^{-1}(\{i,j\})]$.

\paragraph{Claim 1.} For every $1 \le i <j \le k$ and every vertex $x \in N^+(x_0) \cap c^{-1}(i)$, there is a vertex $y \in N^+(x_0) \cap c^{-1}(j)$ such that $x$ and $y$ lie in the same strong component of $D_{i,j}$.
\begin{proof}
Denote by $X \subseteq c^{-1}(\{i,j\})$ the unique strong component of $D_{i,j}$ containing $x$. Suppose towards a contradiction that $X \cap (N^+(x_0) \cap c^{-1}(j))=\emptyset$. Let $c'$ be the coloring of $D[Y_1]$ obtained from $c$ by switching colors $i$ and $j$ within $X$. According to Lemma~\ref{componentswitch}, $c'$ is an acyclic coloring of $D[Y_1]$. By definition, we furthermore have $\textbf{v}(c')_\ell=\textbf{v}(c)_\ell$ for all $\ell \in \{1,\ldots,k\}\setminus\{i,j\}$, and since no vertex in $N^+(x_0)$ is switched from color $j$ to color $i$ while $x$ is switched from color $i$ to color $j$, we have $\textbf{v}(c')_i<\textbf{v}(c)_i$. However, since $i<j$, this means that $c'\prec c$, contradicting our minimality assumption on $c$. This shows that our assumption was wrong, namely there does exist a vertex $y \in X \cap (N^+(x_0) \cap c^{-1}(j))$. This yields the claim.
\end{proof}

\paragraph{Claim 2.} There are vertices $x_1,x_2,\ldots,x_k \in N^+(x_0) \cap Y_1$ such that
\begin{itemize}
\item $c(x_i)=i$, for $i=1,\ldots,k$.
\item There is a directed cycle $C$ in $D$ containing $x_0$ and $x_1$ such that $V(C) \setminus \{x_0\} \subseteq c^{-1}(1)$. 
\item For every $2 \le i \le k$, there exists a directed path $P_{i-1,i}$ in $D[Y_1]$ with endpoints $x_{i-1},x_{i}$ such that $V(P_{i-1,i}) \subseteq c^{-1}(\{i-1,i\})$. In addition, $P_{i-1,i}$ is directed from $x_{i-1}$ to $x_{i}$ if $(v_{i-1},v_{i}) \in A(P)$, and directed from $x_{i}$ to $x_{i-1}$ if $(v_{i},v_{i-1}) \in A(P)$.
\end{itemize}
\begin{proof}
We start by showing 
that there is a directed cycle $C$ in $D$ through $x_0$ such that \linebreak $V(C) \setminus \{x_0\} \subseteq c^{-1}(1)$. Assume, towards a contradiction, that no such cycle exists, and consider the coloring $c_0':V(D) \rightarrow \{1,\ldots,M+k\}$ defined by  
$$c_0'(x):=\begin{cases} c(x), & \text{ if }x \in Y_1 \cr 1, & \text{ if }x=x_0 \cr c_0(x), & \text{ if }x \in Y_2\setminus \{x_0\}. \end{cases}$$ 
Our assumption implies that $c_0'$ is an acyclic coloring of $D$, because there is no directed cycle containing $x_0$ which is monochromatic under $c_0'$. However, the coloring $c_0'$ has one more vertex in colors $\{1,...,k\}$ than $c_0$, contradicting our maximality assumption on $c_0$. Therefore, a cycle $C$ with the claimed properties exists.

Now define $x_1 \in N^+(x_0) \cap V(C)$ to be the unique out-neighbor of $x_0$ on $C$. We have $c(x_1)=1$ since $x_1\in V(C)\setminus \{ x_0\}$. We now successively define vertices $x_2,\dots,x_k$ as follows: for $i=2,3,\ldots,k$, define the vertex $x_i$ to be a vertex in $N^+(x_0) \cap c^{-1}(i)$ chosen such that $x_{i-1}$ and $x_i$ lie in the same strong component of $D_{i-1,i}$. Such a choice is possible by Claim 1. 

The first and second items of the claim follow directly from our choice of the vertices $x_1,\ldots,x_k$. For the last item, for each $2 \le i \le k$, we choose a directed path $P_{i-1,i}$ in $D_{i-1,i}$, such that $P_{i-1,i}$ is directed from $x_{i-1}$ to $x_i$ if $(v_{i-1},v_i) \in A(P)$ and from $x_i$ to $x_{i-1}$ if $(v_i,v_{i-1}) \in A(P)$. The existence of such a path follows in each case since $x_{i-1},x_i$ are in the same strong component of $D_{i-1,i}$. Clearly, $V(P_{i-1,i}) \subseteq V(D_{i-1,i})=c^{-1}(\{i-1,i\})$. This proves the last item.
\end{proof}

\paragraph{Claim 3.} There are vertices $z_1,z_2,\ldots,z_k \in Y_1$ such that
\begin{itemize}
\item $c(z_i)=i$, for $i=1,\ldots,k$.
\item $z_1 \in V(C)$ and $z_k \in N^+(x_0)$.
\item For every $2 \le i \le k$, there exists a directed path $Q_{i-1,i}$ in $D[Y_1]$ with endpoints $z_{i-1},z_{i}$ such that $Q_{i-1,i}$ is directed from $z_{i-1}$ to $z_{i}$ if $(v_{i-1},v_{i}) \in A(P)$, and directed from $z_{i}$ to $z_{i-1}$ if $(v_{i},v_{i-1}) \in A(P)$. 
\item The paths $Q_{i-1,i}, i=2,\ldots,k$ are pairwise internally vertex-disjoint. 
\item $V(C) \cap V(Q_{1,2})=\{z_1\}$ and $V(C) \cap V(Q_{i-1,i})=\emptyset$ for $i=3,\ldots,k$.
\end{itemize}
\begin{proof}
We define the vertices $z_i$ as follows: We define $z_1 \in V(C)$ to be the unique last vertex in $V(C)$ we meet when traversing the trace of the path $P_{1,2}$ starting from $x_1 \in V(C)$. Since $P_{1,2}$ uses only colors $1$ and $2$, we must have $z_1 \in V(C) \setminus \{x_0\}$ and thus $c(z_1)=1$. For $i=2,\ldots,k-1$, we successively define $z_i$ to be the first vertex of $P_{i,i+1}$ we meet when traversing the trace of the path $P_{i-1,i}[z_{i-1},x_i]$ starting from $z_{i-1}$ (such a vertex exists, since $x_i \in V(P_{i-1,i}) \cap V(P_{i,i+1})$ by Claim~2). Since $V(P_{i-1,i}) \subseteq c^{-1}(\{i-1,i\}), V(P_{i,i+1}) \subseteq c^{-1}(\{i,i+1\})$, it follows that $c(z_i)=i$. Finally, we put $z_k:=x_k \in N^+(x_0)$. For each $i \in \{2,3,\ldots,k\}$, we define $Q_{i-1,i}:=P_{i-1,i}[z_{i-1},z_i]$.

Let us now verify the correctness of the claim. The first three items follow directly from Claim~2 and the definition of the vertices $z_i$ and the paths $Q_{i-1,i}$.

For the fourth item, let $i<j \in \{2,\ldots,k\}$ be given. We need to show that $Q_{i-1,i}$ and $Q_{j-1,j}$ can only intersect in their endpoints.  If $j-i \ge 2$, then we directly conclude that $V(Q_{i-1,i}) \cap V(Q_{j-1,j}) \subseteq V(P_{i-1,i}) \cap V(P_{j-1,j}) \subseteq c^{-1}(\{i-1,i\}) \cap c^{-1}(\{j-1,j\})=\emptyset$. If on the other hand $j=i+1$, then by definition of the vertex $z_i$, no vertex on the path $Q_{i-1,i}=P_{i-1,i}[z_{i-1},z_i]$ except for $z_i$ lies on $P_{i,i+1}$, and therefore also not on $Q_{j-1,j}=P_{i,i+1}[z_i,z_{i+1}]$. Hence, $V(Q_{i-1,i}) \cap V(Q_{j-1,j}) =\{z_i\}=\{z_{j-1}\}$. This concludes the proof of the fourth item. 
The claim that $V(C) \cap V(Q_{1,2})=\{z_1\}$ in the fifth item directly follows from our choice of $Q_{1,2}=P_{1,2}[z_1,z_2]$ and the definition of $z_1$ as being the last vertex on $C$ we meet when traversing $P_{1,2}$ starting at $x_1$. For $i \in \{3,\ldots,k\}$, we can conclude the second part of the last item from the inclusion $V(C) \cap V(Q_{i-1,i}) \subseteq (c^{-1}(\{1\}) \cup \{x_0\}) \cap c^{-1}(\{i-1,i\})=\emptyset$. 
\end{proof}

Let $S^\ast$ be the subdigraph of $D$ formed by joining $S \subseteq D[Y_2]$, the pairwise distinct vertices $z_1,\ldots,z_k$ and the connecting dipaths $Q_{i-1,i}, i=2,\ldots,k$, the two anti-parallel directed paths $C[x_0,z_1]$, $C[z_1,x_0]$ between $x_0$ and $z_1$ as well as the arc $(x_0,z_k)$. From Claim 3 and since $\left(\bigcup_{i=2}^{k}{V(Q_{i-1,i})} \cup (V(C) \setminus \{x_0\})\right) \cap V(S)\subseteq Y_1 \cap Y_2=\emptyset$, it follows that $S^\ast$ is isomorphic to a subdivision of $F^\ast$, with $x_0,z_1,z_2,\ldots,z_k$ playing the roles of the vertices $v_0,v_1,v_2,\ldots,v_k$ of $F^\ast$. 

We have thus shown that every digraph $D$ with $\vec{\chi}(D)=\text{mader}_{\vec{\chi}}(F)+k$ contains a subdivision of $F^\ast$, and this concludes the proof of the theorem.
\end{proof}

By definition, every octus is obtained from $K_1$ via a sequence of operations of two types: ear addition and taking a subdigraph. For an octus $F$, let $s(F)$ be the (minimal) number of operations needed to obtain $F$. 
Let us say that $F$ is a {\em maximal octus} if it can be obtained from $K_1$ by a sequence of ear additions only. By repeatedly applying Theorem \ref{kempe}, we see that $\text{mader}_{\vec{\chi}}(F) = v(F)$ for every maximal octus $F$. To complete the proof of Theorem \ref{thm:main1}, we also need to address non-maximal octi. This will be done using the following two lemmas. 


\begin{lemma}\label{lem:octus_subdigraph_1}
	Every octus is a subdigraph of a maximal octus. 
\end{lemma}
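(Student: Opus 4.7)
The plan is to prove this by induction on $s(F)$, the minimum number of construction steps needed to derive the octus $F$ from $K_1$ according to Definition~\ref{def:octi}. The base case $s(F) = 0$ is immediate: then $F = K_1$, which is itself a maximal octus.

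For the inductive step, I would fix a shortest derivation $K_1 = F_0, F_1, \ldots, F_{s(F)} = F$ and set $F' := F_{s(F)-1}$. Then $F'$ is an octus with $s(F') < s(F)$, so by the induction hypothesis there is a maximal octus $M'$ containing $F'$ as a subdigraph. If the last step $F' \to F$ is an application of the subdigraph operation, so that $F \subseteq F'$, then $F \subseteq F' \subseteq M'$ and $M'$ already witnesses the claim.

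The remaining case is when the last step is an ear addition: $F$ is obtained from $F'$ by choosing some $v_0 \in V(F')$ and attaching an ear — a path $P = v_1, \ldots, v_k$ disjoint from $V(F')$, the digon between $v_0$ and $v_1$, and exactly one of the arcs $(v_0, v_k), (v_k, v_0)$. Since $v_0 \in V(F') \subseteq V(M')$, I would like to perform the very same ear addition on $M'$. After relabeling the vertices of $V(M') \setminus V(F')$ if necessary (this preserves the property that $M'$ is a maximal octus containing $F'$), I may assume that $V(P)$ is disjoint from $V(M')$, so that attaching $P$ to $M'$ at $v_0$ using the same digon and the same end-arc is a legal ear addition. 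Calling the resulting digraph $M$, it is a maximal octus because $M'$ is, and $F \subseteq M$ because $F = F' \cup (\text{ear}) \subseteq M' \cup (\text{ear}) = M$. This completes the induction.

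The argument is essentially routine; the one point that needs care, and which I regard as the main obstacle, is the vertex-disjointness condition on $V(P)$ in the definition of ear addition. The induction hypothesis only controls $M'$ up to isomorphism, so one must explicitly invoke a relabeling of the vertices of $M' \setminus F'$ to guarantee that the ear attached to $F'$ can literally be reused when extending $M'$ to $M$. With that bookkeeping in place, the inclusion $F \subseteq M$ follows directly from the inclusion $F' \subseteq M'$.
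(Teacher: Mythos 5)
Your proof matches the paper's almost exactly: the same induction on $s(F)$, the same two cases for the final construction step, and the same idea of lifting the ear addition from $F'$ to the maximal octus containing it. The relabeling point you flag is a genuine (if minor) piece of bookkeeping that the paper silently elides, so your version is, if anything, slightly more careful.
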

\begin{proof}
	The proof is by induction on $s(F)$. If $s(F) = 0$ then $F = K_1$ and the assertion is trivial. Suppose then that $s(F) \geq 1$. By the definition of an octus (see Definition \ref{def:octi}), either $F$ is a subdigraph of some octus $F'$ with $s(F') < s(F)$, or $F$ is obtained by ear addition from some octus $F'$ with $s(F') < s(F)$. In the former case, the induction hypothesis implies that $F'$ --- and hence also $F$ --- is a subdigraph of a maximal octus, as required. Suppose then that $F$ is obtained by ear addition from some octus $F'$ with $s(F') < s(F)$. By the induction hypothesis, $F'$ is a subdigraph of some maximal octus $F''$. By performing on $F''$ the same ear addition which turns $F'$ into $F$, we obtain a maximal octus which contains $F$. This completes the proof.  
\end{proof}

\begin{lemma}\label{lem:octus_subdigraph_2}
	For every connected subdigraph $F'$ of a maximal octus $F$, there is a maximal octus $\bar{F}$ such that $F'$ is a {\em spanning} subdigraph of $\bar{F}$.
\end{lemma}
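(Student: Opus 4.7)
The plan is to induct on the number $m$ of ear additions used to construct $F$. The base case $m=0$ gives $F = K_1 = F'$, so $\bar{F} = K_1$ works. For $m \geq 1$, write $F$ as the result of adding, to a maximal octus $F''$ (with $s(F'') < s(F)$), an ear consisting of a path $w_1, \ldots, w_p$ attached at $w_0 \in V(F'')$, with a digon at $w_0 w_1$ and a closing arc at $w_0 w_p$; set $T := V(F') \cap \{w_1, \ldots, w_p\}$. Three cases arise.

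If $T = \emptyset$, then $F' \subseteq F''$, and the induction hypothesis applied to $F''$ yields $\bar{F}$ immediately. If instead $w_0 \notin V(F')$, then $V(F') = T$; since ear vertices connect to $V(F'')$ only via $w_0$ and $F'$ is (weakly) connected, $V(F')$ must be a contiguous interval $\{w_i, \ldots, w_j\}$ of the ear path, with every intermediate path arc (in its one orientation) belonging to $F'$. Then the bioriented path on $\{w_i, \ldots, w_j\}$, built from $K_1 = \{w_i\}$ by a chain of $k=1$ (digon) ear additions, is a maximal octus which clearly contains $F'$.

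The main case is $w_0 \in V(F')$ and $T \neq \emptyset$. First I check that $F'_1 := F'[V(F') \setminus T]$ is connected: for any $v \in V(F'_1)$, take an $F'$-walk from $v$ to $w_0$ and truncate it at the first visit to $w_0$; the truncated walk stays in $V(F'_1)$ because the only $V(F'')$-neighbor of an ear vertex is $w_0$. The induction hypothesis applied to the connected subdigraph $F'_1 \subseteq F''$ then produces a maximal octus $\bar{F}_1$ on $V(F'_1)$ with $F'_1 \subseteq \bar{F}_1$. Next, I argue that the weakly connected components of $F'[T]$ are contiguous subintervals of the ear path (the only $F$-arcs among ear vertices are path arcs of $P$) and that each must contain $w_1$ or $w_p$---otherwise such a component would have no arc of $F'$ reaching $w_0$, contradicting the connectedness of $F'$. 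So there are at most two components: one of the form $\{w_1, \ldots, w_b\}$ and one of the form $\{w_a, \ldots, w_p\}$ (coinciding if $b = p$).

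I then extend $\bar{F}_1$ by one or two ear additions at $w_0$. For a component $\{w_1, \ldots, w_b\}$, I add an ear whose path is $w_1, \ldots, w_b$ with arc orientations inherited from $F'$, whose digon sits at $w_0 w_1$, and whose closing arc at $w_0 w_b$ matches $F'$ when $b = p$ and is chosen arbitrarily otherwise. For a component $\{w_a, \ldots, w_p\}$, I add an ear with path $w_p, w_{p-1}, \ldots, w_a$, digon at $w_0 w_p$ (so the $w_p$ end is the digon end), and closing arc at $w_0 w_a$. Each ear addition is valid since $w_0$ belongs to the current digraph, and the resulting $\bar{F}$ is a maximal octus on $V(F'_1) \cup T = V(F')$. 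The main obstacle is fitting $F'$'s arcs into the rigid ``digon at one end, single arc at the other'' ear template, and it is handled by two observations: the ear operation lets us freely designate which endpoint of an ear is the digon end, and $\bar{F}$ only has to contain $F'$ rather than sit inside $F$, so introducing extra arcs (the second direction of a digon, or an arbitrarily oriented closing arc when $b < p$) does no harm. A routine check then verifies that every arc of $F'$---be it in $F'_1$, a path arc within one of the components, or a digon/closing arc between $w_0$ and $w_1$ or $w_p$---is covered by $\bar{F}_1$ or one of the new ears, completing the induction.
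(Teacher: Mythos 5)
Your proof is correct and follows the same inductive approach as the paper: strip the last ear, apply the induction hypothesis to the surviving connected piece $F'_1 = F'[V(F')\setminus T]$ inside the smaller maximal octus, and reattach one or two ears at $w_0$ to cover the vertices of $F'$ lying on the stripped ear. Your treatment of the case distinctions (the walk-truncation argument for connectivity of $F'_1$, and the classification of the components of $F'[T]$ as at most two prefix/suffix intervals each meeting $w_1$ or $w_p$) is somewhat more explicit than the paper's terse phrasing, but the underlying argument is the same.
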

\begin{proof}
	The proof is by induction on $s(F)$. If $s(F) = 0$ then $F = K_1$ and the assertion is trivial. Let then $F$ be a maximal octus with $s(F) \geq 1$, and let $F'$ be a connected subdigraph of $F$. 
	By the definition of maximal octi, there is some maximal octus $F_0$ with $s(F_0) < s(F)$ and $v_0 \in V(F_0)$ such that $F$ is obtained from $F_0$ by ear addition, namely, by adding an oriented path $P=v_1,\dots,v_k$ with $\{v_1,\dots,v_k\} \cap V(F_0) = \emptyset$, as well as the arcs $(v_0,v_1),(v_1,v_0)$ and (w.l.o.g.) $(v_0,v_k)$. Consider the subdigraph $F'_0 := F'[V(F') \cap V(F_0)]$ of $F_0$. 
	If $V(F') \cap V(F_0) = \emptyset$, namely if $V(F') \subseteq \{v_1,\dots,v_k\}$, then $F'$ is an oriented path, and is hence a spanning subgraph of a bioriented path, which is a maximal octus. Suppose then that $V(F') \cap V(F_0) \neq \emptyset$. 
	The way $F$ is constructed from $F_0$ and the assumption that $F'$ is connected imply that $F'_0$ is connected as well. 
	By the induction hypothesis (applied to $F_0$), there is a maximal octus $\bar{F_0}$ such that $F'_0$ is a spanning subdigraph of $\bar{F_0}$. If $F' = F'_0$ then we are done, and otherwise we must have $V(F') \cap \{v_1,\dots,v_k\} \neq \emptyset$, which in turn implies that $v_0 \in V(F'_0) = V(\bar{F_0})$ because $F'$ is connected. Now, if $\{v_1,\dots,v_k\} \subseteq V(F')$ then $F'$ is a spanning subdigraph of the maximal octus obtained from $V(\bar{F_0})$ by adding the path $P$ and connecting its endpoints to $v_0 \in V(\bar{F_0})$ using the arcs $(v_0,v_1),(v_1,v_0),(v_0,v_k)$. Otherwise, i.e. if $\{v_1,\dots,v_k\} \not\subseteq V(F')$, then there must be some $1 \leq i < j \leq k$ such that $V(F') = V(\bar{F_0}) \cup \{v_1,\dots,v_i\} \cup \{v_j,\dots,v_k\}$ (as $F'$ is connected). Now, let $\bar{F}$ be the maximal octus obtained from $\bar{F_0}$ by a sequence of two ear additions: we first add the path $v_1,\dots,v_i$ and the arcs $(v_0,v_1),(v_1,v_0),(v_0,v_i)$ and then the path $v_j,\dots,v_k$ and the arcs $(v_0,v_j),(v_j,v_0),(v_0,v_k)$. Then $F'$ is a spanning subdigraph of $\bar{F}$, as required.  	
\end{proof}

\begin{proof}[Proof of Theorem \ref{thm:main1}]
	Our goal is to show that $\text{mader}_{\vec{\chi}}(F) = v(F)$ for every octus $F$. First, observe that it suffices to prove this statement for connected $F$, since the general statement would then follow by invoking Observation \ref{subadditivity}.
	So let $F$ be a connected octus. By combining Lemmas \ref{lem:octus_subdigraph_1} and \ref{lem:octus_subdigraph_2}, we see that $F$ is a spanning subdigraph of some maximal octus $\bar{F}$. As mentioned before, Theorem \ref{kempe} implies that $\text{mader}_{\vec{\chi}}(\bar{F}) = v(\bar{F}) = v(F)$. As $F$ is a subdigraph of $\bar{F}$, we have $\text{mader}_{\vec{\chi}}(F) \leq \text{mader}_{\vec{\chi}}(\bar{F})$ and hence 
	$\text{mader}_{\vec{\chi}}(F) = v(F)$, as required.  
\end{proof}

\section{Tournaments of order $4$}\label{sec:tourns}
In this section we prove Theorem~\ref{thm:dirdirac}. 
We give a separate proof for each of the $4$-vertex tournaments. There are exactly four non-isomorphic tournaments on $4$ vertices: $\vec{K}_4$, the transitive tournament of order $4$; $\vec{K}_4^s$, the unique strongly connected tournament of order $4$; and the tournaments $W_4^+, W_4^-$ obtained from the directed triangle $\vec{C}_3$ by adding a dominating source or sink, respectively. See Figure~\ref{fig:4tourns} for an illustration. 

\begin{figure}[h]
\centering
\includegraphics[scale=0.6]{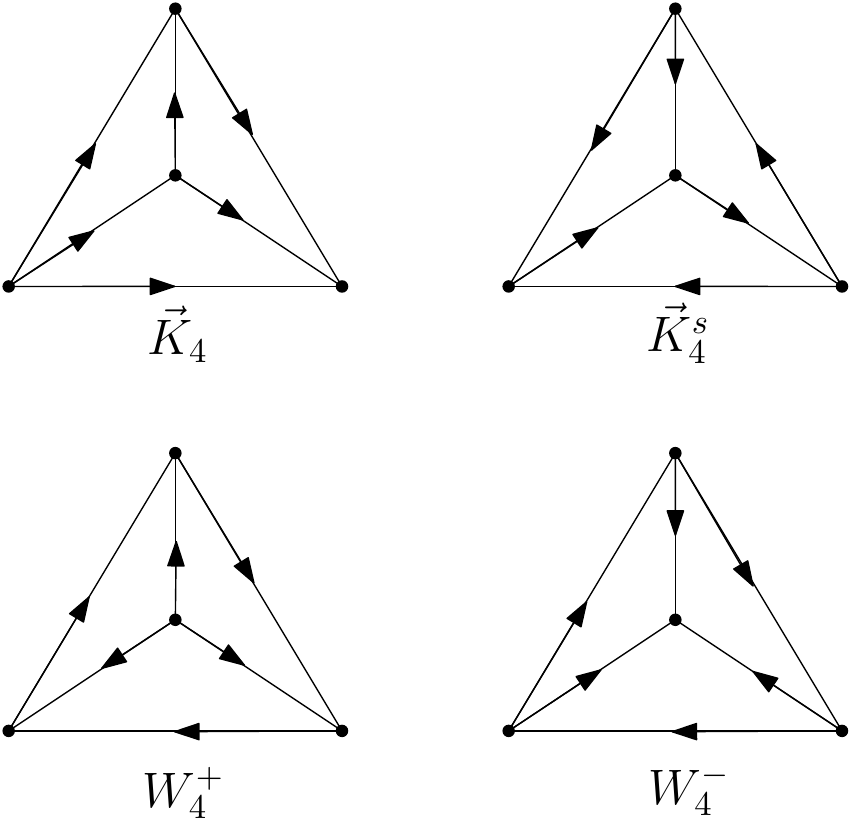}
\caption{The four non-isomorphic tournaments of order $4$.}\label{fig:4tourns}
\end{figure}

Since $W_4^+$ and $W_4^-$ are obtained from each other by reversing the orientations of all arcs, it suffices to prove Theorem~\ref{thm:dirdirac} for $\vec{K}_4, \vec{K}_4^s$ and $W_4^+$. While we can derive the result for the transitive tournament $\vec{K}_4$ directly from Theorem~\ref{subdivtrans4}, the proofs for $\vec{K}_4^s$ and $W_4^+$ are more involved and require some preparation.

\begin{proof}[Proof of $\text{mader}_{\vec{\chi}}(\vec{K}_4)=4$.]
Let $D$ be a given digraph with $\vec{\chi}(D) \ge 4$. Then $D$ contains a $4$-dicritical subdigraph $D' \subseteq D$. By Lemma~\ref{mindegree}, we have $\delta^+(D') \ge 3$. We now apply Theorem~\ref{subdivtrans4} to conclude that $D'$ and thus also $D$ contains a subdivision of $\vec{K}_4$. This completes the proof.
\end{proof}
\noindent
We prepare the proofs of $\text{mader}_{\vec{\chi}}(\vec{K}_4^s)=4$ and $\text{mader}_{\vec{\chi}}(W_4^+)=4$ with a set of useful lemmas.
\begin{lemma}\label{lem:butterfly1}
	Let $D$ be a digraph, let $(u,w) \in A(D)$, and let $D'$ be the digraph obtained from $D$ by deleting $u$ and adding the arc $(x,w)$ for each $x \in N_D^-(u) \setminus \{w\}$ (unless it already exists). Let $F$ be a sink-free orientation of a cubic graph. 
	If $D'$ contains a subdivision of $F$, then so does $D$.
\end{lemma}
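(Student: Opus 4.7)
The plan is to take any subdivision $S'$ of $F$ contained in $D'$ and modify it into a subdivision $S$ of $F$ inside $D$. The key observation is that the only arcs in $A(D') \setminus A(D)$ are the newly added arcs of the form $(x,w)$ with $x \in N_D^-(u) \setminus \{w\}$; in particular, every ``bad'' arc used by $S'$ (i.e., an arc of $S'$ not present in $D$) must be an arc entering $w$.

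Next I would bound how many bad arcs $S'$ can use. The in-degree of $w$ inside $S'$ equals the in-degree in $F$ of the vertex that $w$ represents, if $w$ is a branch vertex, and equals $1$ otherwise. Since $F$ is a sink-free orientation of a cubic graph, every vertex of $F$ has out-degree at least $1$ and hence in-degree at most $2$. Therefore $S'$ uses at most two bad arcs, splitting the argument into three cases: $0$, $1$, or $2$ bad arcs.

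If there are no bad arcs, then $S' \subseteq D$ and we are done. If there is exactly one bad arc $(x,w) \in A(S')$, I would replace it by the two-arc directed path $x \to u \to w$, which lies in $D$ because $(x,u), (u,w) \in A(D)$; inserting $u$ as a new internal vertex of the affected subdivision path is safe, since $u \notin V(D') \supseteq V(S')$, so no vertex conflict arises.

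The main obstacle is the case of two bad arcs, and this is where both hypotheses on $F$ are essential. If $(x_1,w)$ and $(x_2,w)$ are both bad arcs of $S'$, then $w$ must be a branch vertex of $S'$ representing some $v \in V(F)$ of in-degree $2$; the cubic and sink-free assumptions together force $v$ to have out-degree exactly $1$, so there is a unique subdivision path leaving $w$, say $w \to y_1 \to y_2 \to \cdots$. The plan is then to swap the roles of $u$ and $w$: promote $u$ to the new branch vertex representing $v$, let the two incoming subdivision paths end with the $D$-arcs $x_1 \to u$ and $x_2 \to u$ (after truncating their bad final arc), and prepend the $D$-arc $(u,w)$ to the outgoing subdivision path so that it becomes $u \to w \to y_1 \to \cdots$, with $w$ demoted to an internal vertex. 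Because $u \notin V(S')$ and $w$ appeared in $S'$ only as the shared branch endpoint of those three paths, the internal disjointness of every pair of subdivision paths carries over from $S'$, and every arc used lies in $D$, giving the required subdivision $S$ of $F$ in $D$.
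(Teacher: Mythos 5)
Your proposal is correct and follows essentially the same approach as the paper's proof: bound the number of problematic incoming arcs at $w$ using the sink-free cubic hypothesis, then either subdivide the affected path through $u$ (one such arc) or shift the branch vertex from $w$ to $u$ (two such arcs). The only cosmetic difference is your case split is by the number of \emph{bad} arcs $(x,w)\in A(S')\setminus A(D)$, whereas the paper splits by the number of arcs $(x,w)\in A(S')$ with $x\in N_D^-(u)$ (bad or not); both organizations handle the mixed situation correctly, so the arguments coincide in substance.
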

\begin{proof}
Let $S' \subseteq D'$ be a subdigraph of $D'$ isomorphic to a subdivision of $F$. If $(x,w) \notin A(S')$ for all $x \in N_D^-(u)$, then $S'$ is also a subdigraph of $D$ and hence we have found a subdivision of $F$ in $D$. So suppose that $(x,w) \in A(S')$ for some $x \in N_D^-(u)$. We now distinguish between two \nolinebreak cases.

\textbf{Case 1:} There exists $x' \in N_D^-(u) \setminus \{x\}$ such that $(x',w) \in A(S')$. Then $w$ must be a branch vertex of the subdivision $S'$, and since $F$ is a sink-free orientation of a $3$-regular graph, there exists a unique third neighbor $x''$ of $w$ in $S'$ satisfying $(w,x'') \in A(S') \subseteq A(D')$. By definition of $D'$, we have $(w,x'') \in A(D)$ as well. We now see that the subdigraph $S$ of $D$ defined by $V(S):=V(S') \cup \{u\}$, $A(S):=(A(S') \setminus \{(x,w),(x',w)\}) \cup \{(x,u),(x',u),(u,w))\}$ forms a subdivision of $F$ in $D$, where the branch vertex $w$ of $S'$ is moved to the new branch vertex $u$ of $S$ (and $w$ becomes a subdivision vertex).

\textbf{Case 2:} $x$ is the unique vertex in $N_D^-(u)$ such that $(x,w) \in A(S')$. Then the subdigraph $S$ of $D$ defined by $V(S):=V(S') \cup \{u\}$ and $A(S):=(A(S') \setminus \{(x,w)\}) \cup \{(x,u),(u,w)\}$ forms a subdivision of $F$ contained in $D$.
\end{proof}
\begin{lemma}\label{cutvertices}
Let $D$ be a strongly connected digraph, let $v \in V(D)$, and let $(X,Y)$ be a non-trivial partition of $V(D)\setminus\{v\}$ such that $(x,y) \notin A(D)$ for all $x \in X, y \in Y$. Suppose further that $D[X]$ is strongly connected. Let $D_1:=D[X \cup \{v\}]$ and let $D_2$ be defined by $V(D_2):=Y \cup \{v\}$ and 
$A(D_2):=A(D[Y \cup \{v\}]) \cup \{(y,v) \; | \; y \in Y, x \in X, (y,x) \in A(D)\}$. Let further $F$ be a sink-free orientation of a cubic graph. Then
\begin{enumerate}
\item If $D_1$ or $D_2$ contains a subdivision of $F$, then so does $D$.
\item $\vec{\chi}(D) \le \max\{\vec{\chi}(D_1), \vec{\chi}(D_2)\}$.
\end{enumerate}
\end{lemma}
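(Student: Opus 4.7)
The central structural observation is that, since $D$ has no arcs from $X$ to $Y$, any directed walk entering $X$ can only exit via $v$; in particular every $x \in X$ admits a dipath to $v$ lying entirely in $D_1$. For part~(1), the case that the required subdivision already sits in $D_1$ is immediate since $D_1 \subseteq D$; the interesting direction is to lift a subdivision from $D_2$ to $D$. For part~(2), the strategy is to pick acyclic colorings of $D_1$ and $D_2$ that agree at $v$, combine them into a coloring $c$ of $D$, and argue that any monochromatic dicycle of $D$ already contradicts one of the two input colorings.

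For part~(2), let $k := \max\{\vec{\chi}(D_1),\vec{\chi}(D_2)\}$, pick acyclic $k$-colorings $c_1, c_2$ of $D_1, D_2$, and permute color names in $c_2$ so that $c_1(v) = c_2(v)$. Let $c$ be defined as $c_1$ on $X \cup \{v\}$ and $c_2$ on $Y$. If $C$ were a monochromatic dicycle of $D$ under $c$, then $V(C) \subseteq X \cup \{v\}$ would contradict $c_1$, and $V(C) \subseteq Y \cup \{v\}$ would contradict $c_2$. Otherwise $V(C)$ meets both $X$ and $Y$, and by the structural observation we may write $C = v, y_1, \ldots, y_a, x_1, \ldots, x_b, v$ with $y_i \in Y$, $x_j \in X$, $a, b \geq 1$, and $(y_a, x_1) \in A(D)$. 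Then the closed walk $v, y_1, \ldots, y_a, v$ is a dicycle in $D_2$, using the arc $(y_a, v) \in A(D_2)$ (present precisely because $(y_a, x_1) \in A(D)$), and it is monochromatic under $c_2$ --- a contradiction.

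For part~(1), suppose $S' \subseteq D_2$ is a subdivision of $F$. Call an arc of $S'$ \emph{new} if it does not lie in $A(D)$; every such arc has the form $(y, v)$ with $y \in Y$ and is witnessed by some $x \in X$ with $(y, x) \in A(D)$. Since $F$ is a sink-free orientation of a cubic graph, every vertex of $F$ has in-degree at most two, so $v$ has at most two in-neighbors in $S'$ and hence $S'$ uses at most two new arcs. If $S'$ uses none, then $S' \subseteq D$. If $S'$ uses exactly one new arc $(y, v)$, I replace it by the arc $(y, x)$ followed by any dipath from $x$ to $v$ in $D_1$; its internal vertices lie in $X$ and are therefore disjoint from $V(S') \subseteq Y \cup \{v\}$, yielding a valid subdivision in $D$.

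The main obstacle is the case where $S'$ uses two new arcs $(y_1, v), (y_2, v)$; then $v$ is forced to be a branch vertex of $S'$ of in-degree $2$ and out-degree $1$ in $F$, and the unique out-arc of $S'$ at $v$ is some arc $(v, z) \in A(D)$ with $z \in Y$. Let $x_i \in X$ witness $(y_i, v)$ for $i = 1,2$. Because every vertex of $X$ reaches $v$ in $D_1$, there is an in-arborescence $T \subseteq D_1$ rooted at $v$ spanning $X \cup \{v\}$. Let $P_i$ be the unique $x_i$-to-$v$ dipath in $T$ and let $b \in X \cup \{v\}$ be the first vertex at which $P_1$ meets $P_2$; then the three dipaths $P_1[x_1, b]$, $P_2[x_2, b]$, $P_1[b, v]$ are pairwise internally vertex-disjoint. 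I would relocate the branch vertex from $v$ to $b$ by replacing, in $S'$, the three arcs $(y_1, v)$, $(y_2, v)$, $(v, z)$ with the dipaths $(y_1, x_1)\cdot P_1[x_1, b]$, $(y_2, x_2)\cdot P_2[x_2, b]$, and $P_1[b, v]\cdot (v, z)$, respectively. Internal vertices of all rerouted portions lie in $X \cup \{v\}$ and are thus disjoint from $V(S') \setminus \{v\} \subseteq Y$, and the three new portions meet only at $b$, so the resulting subdigraph is a subdivision of $F$ in $D$ with $b$ taking over the role previously played by $v$.
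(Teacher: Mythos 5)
Your proof is correct. Part~(2) is essentially the same argument as the paper's: merge colorings agreeing at $v$ and project any hypothetical monochromatic dicycle onto a dicycle of $D_2$ by shortcutting at the first arc leaving $Y$.

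For part~(1) the underlying structural idea is the same --- an in-arborescence $T \subseteq D_1$ rooted at $v$ and spanning $X\cup\{v\}$ --- but the execution differs. The paper first isolates a one-vertex ``splitting'' step as a separate statement (Lemma~\ref{lem:butterfly1}) and then deduces part~(1) by repeatedly contracting leaves of $T$ to obtain a sequence of digraphs $H_n \supseteq \cdots \supseteq H_0 = D_2$, pushing the subdivision back one vertex at a time. You instead exploit that the sink-free cubic hypothesis forces $d^-_{S'}(v)\le 2$, so $S'$ uses at most two arcs that are not present in $D$, and you reroute those few arcs directly through $T$, relocating the branch vertex to the meeting point $b$ of the two arborescence paths when necessary. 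Your argument is more direct and self-contained (it avoids Lemma~\ref{lem:butterfly1} entirely), at the cost of a short case analysis on the number of replaced arcs; the paper's modular formulation pays off because Lemma~\ref{lem:butterfly1} is reused elsewhere (in the proof that $\text{mader}_{\vec{\chi}}(\vec{K}_4^s)=4$). One minor remark: you justify the existence of the $x$-to-$v$ dipaths in $D_1$ directly from strong connectivity of $D$ and the absence of $X\to Y$ arcs, so you do not actually invoke strong connectivity of $D[X]$; the paper uses that hypothesis at the corresponding point, but your shortcut is also valid.
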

\begin{proof}
\noindent
\begin{enumerate}
\item The claim is trivial for $D_1$, since $D_1 \subseteq D$. Now suppose that $D_2$ contains a subdivision of $F$. The vertex $v$ in $D$ must have an in-neighbor in the set $X$, for otherwise $(X,Y \cup \{v\})$ would form a directed separation of $D$, contradicting the assumed strong connectivity. Since $D[X]$ is strongly connected, it follows that there exists an in-arborescence $T \subseteq D[X \cup \{v\}]$ rooted at $x_0:=v$ which spans $X \cup \{v\}$. Let $n:=|X|$, 
and fix an ordering $x_0,x_1,\ldots,x_n$ of the vertices of $T$ such that each vertex of $T$ appears before its children in the ordering (i.e., if $(x_j,x_i) \in A(T)$ then $j > i$). 
For every $i=n,n-1,n-2,\ldots,1,0$, let $H_i$ be the digraph obtained from $D$ by removing all arcs in $A(D[X \cup \{v\}]) \setminus A(T)$, deleting the vertices $\{x_{i+1},\ldots,x_n\}$ and adding the arc $(y,x_j)$ for every $y \in Y$ and  $j \in \{0,1,\ldots,i\}$ such that $y$ has an out-neighbor $x \in \{x_{i+1},\ldots,x_n\}$ in $D$ and the first intersection of the unique $x$-$x_0$-path in $T$ with $\{x_0,x_1,\ldots,x_i\}$ is $x_j$. Note that $H_n$ is a subdigraph of $D$ and that $H_0=D_2$. Further we can observe that for every $1 \le i \le n$, the digraph $H_{i-1}$ is obtained from $H_i$ by deleting $x_i$, and adding an arc from every $x \in N_{H_i}^-(x_i)$ to the parent of $x_i$ in $T$. Hence, repeated application of Lemma~\ref{lem:butterfly1} yields that if $D_2=H_0$ contains a subdivision of $F$, then the same is true for all $H_i, 0 \le i \le n$. Hence $H_n \subseteq D$ contains a subdivision of $F$, and this proves the claim.
\item Let $k:=\max\{\vec{\chi}(D_1),\vec{\chi}(D_2)\}$ and let $c_1:X \cup \{v\} \rightarrow \{1,\ldots,k\}$ and $c_2:Y \cup \{v\} \rightarrow \{1,\ldots,k\}$ be acyclic $k$-colorings of $D_1$ and $D_2$, respectively. Without loss of generality we may assume $c_1(v)=1=c_2(v)$. We now define a $k$-coloring of $V(D)$ by putting $c(x):=c_1(x)$ for every $x \in X$, $c(v):=1$, and $c(y):=c_2(y)$ for every $y \in Y$. We claim that this defines an acyclic $k$-coloring of $D$. Indeed, if not, then there exists a directed cycle $C$ in $D$ which is monochromatic under $c$. If $v \notin V(C)$, then since there is no arc from $X$ to $Y$ in $D$, we must have either $V(C) \subseteq X$ or $V(C) \subseteq Y$, which in both cases yields a contradiction to our choice of $c_1$ and $c_2$ as acyclic colorings. Hence, $v \in V(C)$ and $V(C) \subseteq c^{-1}(1)$. If $V(C) \cap Y=\emptyset$, then $C$ is a monochromatic cycle in the coloring $c_1$ of $D_1=D[X \cup \{v\}]$, a contradiction. We therefore have $v \in V(C), V(C) \cap Y \neq \emptyset$. Since there are no edges from $X$ to $Y$, there must be $w \in Y$ such that $(v,w) \in A(C)$. Let $C[v,w']$ be a maximal directed subpath of $C$ starting at $v$ such that $V(C[v,w'])\setminus \{v\} \subseteq Y$. (In other words, $C[v,w']$ is obtained by traversing $C$ starting from the arc $(v,w)$ and stopping just before the cycle leaves $Y$.) Then either $(w',v) \in A(D)$, or $(w',x) \in A(D)$ for some $x \in X$ and hence $(w',v) \in A(D_2)$ by definition of $D_2$. Therefore, $C[v,w']+(w',v)$ forms a directed cycle in $D_2$, all of whose vertices have color $1$ under $c_2$, contradicting our assumption on $c_2$. This contradiction shows that our initial assumption was wrong, namely that $c$ is indeed an acyclic coloring of $D$, proving that $\vec{\chi}(D) \le k=\max\{\vec{\chi}(D_1), \vec{\chi}(D_2)\}$. 
\end{enumerate}
\end{proof}
\begin{lemma}\label{lem:3intoone}
Let $D$ be a digraph, and let $u,v,w \in V(D)$ be pairwise distinct such that $v,w \in N^+(u) \cap N^-(u)$ (i.e., $\{u,v\}$ and $\{u,w\}$ induce digons).  Let $D^\ast$ be obtained from $D$ by deleting $v$ and $w$ and adding the arcs 
$$\{(u,x) \; | \; x \in V(D)\setminus \{u,v,w\}, N^-(x) \cap \{v,w\} \neq \emptyset\}$$
and 
$$\{(x,u) \; | \; x \in V(D)\setminus \{u,v,w\}, N^+(x) \cap \{v,w\} \neq \emptyset\}.$$ 
Let $F$ be an orientation of a cubic graph. 
If $D^\ast$ contains a subdivision of $F$, then so does $D$.
\end{lemma}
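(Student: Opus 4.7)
The plan is to start from a subdivision $S^* \subseteq D^*$ of $F$ and surgically transform it into a subdivision of $F$ inside $D$ by rerouting each arc of $S^*$ incident with $u$ that lies in $A(D^*) \setminus A(D)$ (a \emph{new} arc) through one or both of the digons on $\{u, v\}$ and $\{u, w\}$ present in $D$. Each new arc $a$ at $u$ carries a nonempty witness set $W(a) \subseteq \{v, w\}$ recording which of $v, w$ produces $a$ when $u$ is replaced by it. If $u \notin V(S^*)$, or if every arc of $S^*$ incident with $u$ is an arc of $D$, then $S^*$ already lives in $D$ and we are done. Otherwise, we case-split on the role of $u$ in $S^*$.

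If $u$ is an internal vertex of some subdivision-path of $S^*$, with in-arc $(y,u)$ and out-arc $(u,x)$, it suffices to produce a $y$-$x$-dipath in $D$ whose internal vertices lie in $\{u, v, w\}$. If both $(y,u)$ and $(u,x)$ are new and share a common witness $t \in \{v, w\}$, take $y \to t \to x$. If both are new with distinct witnesses $t_1 \neq t_2$, take $y \to t_1 \to u \to t_2 \to x$. If exactly one is new with witness $t$, insert $t$ next to $u$ in the original path via the digon on $\{u, t\}$. In each case the witness arcs together with the digon arcs yield a valid dipath in $D$, and replacing the subpath $y \to u \to x$ of $S^*$ by this dipath produces a subdivision of $F$ in $D$; the fresh vertices from $\{v, w\}$ cannot clash with the rest of $S^*$ because $v, w \notin V(D^*) \supseteq V(S^*)$.

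If instead $u$ is a branch vertex of $S^*$, let $n \in \{1,2,3\}$ denote the number of its three incident arcs that are new. For $n \le 1$ we keep $u$ as branch and reroute the at most one new arc through its witness using the relevant digon. For $n = 2$, if the two new arcs admit distinct witness assignments in $\{v, w\}$, we again keep $u$ as branch and route each through its own intermediary; otherwise both new arcs share a unique common witness $t \in \{v, w\}$, and we move the branch vertex from $u$ to $t$, realizing the two new arcs as direct arcs at $t$ and the remaining original arc by the detour $t \to u \to z$ (or its reversal) through the digon on $\{u, t\}$. For $n = 3$, by pigeonhole some $t \in \{v, w\}$ lies in the witness sets of at least two of the three arcs; we move the branch vertex to $t$, use direct arcs at $t$ for every arc whose witness includes $t$, and realize the possibly remaining third arc, whose witness set must then be $\{t'\}$ for $t' \in \{v, w\} \setminus \{t\}$, by the detour $t \to u \to t' \to z$ (or its reversal) using both digons. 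In every case the vertices $u, v, w$ each appear as an internal vertex of at most one path, arc orientations match those at the original branch vertex $u$ in $S^*$, and the resulting subdigraph is a subdivision of $F$ in $D$. The main obstacle is precisely this $n = 3$ case, where three new arcs must be rerouted through only two available intermediaries; it is resolved by the pigeonhole observation together with the freedom to use $u$ itself as a subdivision vertex once the branch has been shifted to $t$.
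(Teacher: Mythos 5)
Your proof is correct and follows essentially the same idea as the paper: reroute every arc of $S^\ast$ incident to $u$ through the bioriented path on $v,u,w$ available in $D$. The paper packages the branch-vertex case more uniformly: it assigns to each of the three $S^\ast$-neighbors $y_i$ of $u$ a witness $x(y_i)\in\{u,v,w\}$ (taking $x(y_i)=u$ when the arc is already in $D$), relabels so that $x(y_2)$ lies between $x(y_1)$ and $x(y_3)$ on the path $v\text{--}u\text{--}w$, and makes $x(y_2)$ the new branch vertex; this one ordering trick subsumes your case split on the number of new arcs and your pigeonhole step.
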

\begin{proof}
Let $S^\ast$ be a subdigraph of $D^\ast$ isomorphic to a subdivision of $F$. If $u \notin V(S^\ast)$, then $S^\ast$ is also a subdigraph of $D$ and we are done. Hence, suppose in the following that $u \in V(S^\ast)$. If $u$ is a subdivision vertex in $S^\ast$, then let $u^- \in N_{D^\ast}^-(u)$ and $u^+ \in N_{D^\ast}^+(u)$ denote the in- and the out-neighbor of $u$ in $S^\ast$, respectively. By definition of $D^\ast$ there exist $x^-,x^+ \in \{u,v,w\}$ such that $(u^-,x^-), (x^+,u^+) \in A(D)$. Let $P$ denote the bioriented path with vertex-trace $v,u,w$. Then clearly $P$ contains a directed $x^-,x^+$-path $P_{x^-,x^+}$. Now 
$A(S):=(A(S^\ast)\setminus\{(u^-,u),(u,u^+)\}) \cup \{(u^-,x^-),(x^+,u^+)\} \cup A(P_{x^-,x^+})$ forms the arc-set of a subdigraph $S \subseteq D$ isomorphic to a subdivision of $F$. For the next case suppose that $u$ is a branch vertex of the subdivision $S^\ast$. For every in-neighbor $y \in N_{S^\ast}^-(u)$ in $S^\ast$, let $x(y) \in \{u,v,w\}$ be a vertex such that $(y,x(y)) \in A(D)$, and for every out-neighbor $y \in N_{S^\ast}^+(u)$, let $x(y) \in \{u,v,w\}$ be a vertex such that $(x(y),y) \in A(D)$. Let $y_1,y_2,y_3$ be the three distinct neighbors of $u$ in $S^\ast$, ordered in such a way that $x(y_2)$ lies on the unique bioriented subpath $P_{x(y_1),x(y_3)}$ of $P$ connecting the vertices $x(y_1)$ and $x(y_3)$. It is now evident that the subdigraph of $D$ obtained from $S^\ast$ by deleting $u$ and adding $P_{x(y_1),x(y_3)}$ and the arcs $(y_i,x(y_i))$ for $y_i \in N_{S^\ast}^-(u)$ and $(x(y_i),y_i)$ for $y_i \in N_{S^\ast}^+(u)$, contains a subdivision of $F$ with $x(y_2)$ as a branch vertex. This verifies the claim in the second case as well and concludes the proof. 
\end{proof}
\begin{proof}[Proof of $\text{mader}_{\vec{\chi}}(\vec{K}_4^s)=4$]
Suppose towards a contradiction that the claim is wrong, and let $D$ be a counterexample minimizing lexicographically the pair $(|V(D)|,|A(D)|)$; namely, the number of vertices is minimized with first priority and the number of arcs with second priority. Clearly, $|V(D)| \ge 5$, $D$ is $4$-dicritical, and it contains no subdivision of $\vec{K}_4^s$. By Lemma \ref{strong connectivity}, $D$ is strongly-connected.
\paragraph{Claim 1.} $D$ is strongly $2$-vertex-connected.
\begin{proof}
Suppose towards a contradiction that there exists a vertex $v \in V(D)$ such that $D-v$ is not strongly connected. This means that $D-v$ has more than one strong component. Let $X \subseteq V(D-v)$ be the vertex set of a strong component of $D-v$ which is a ``sink'' in $D-v$, that is, there is no arc leaving $X$. Let $Y:=V(D) \setminus (X \cup \{v\})$. Then $(X,Y)$ forms a partition of $V(D)\setminus\{v\}$, $D[X]$ is strongly connected and $(x,y) \notin A(D)$ for all $x \in X, y \in Y$. We can therefore apply Lemma~\ref{cutvertices} with $F=\vec{K}_4^s$ to obtain a pair $D_1,D_2$ of digraphs with vertex-sets $X \cup \{v\}, Y \cup \{v\}$, respectively, such that neither $D_1$ nor $D_2$ contains a subdivision of $\vec{K}_4^s$ and $4=\vec{\chi}(D) \le \max\{\vec{\chi}(D_1),\vec{\chi}(D_2)\}$. However, this means that there is some $i \in \{1,2\}$ such that $\vec{\chi}(D_i) \ge 4$, $D_i$ contains no $\vec{K}_4^s$-subdivision and clearly $|V(D_i)|<|V(D)|$. This contradicts the assumed minimality of $D$, thus showing that the assumption was wrong, namely that $D$ is indeed strongly $2$-vertex-connected.
\end{proof}
\paragraph{Claim 2.} $\delta^+(D), \delta^-(D) \ge 4$.
\begin{proof}
Note that $\vec{K}_4^s$ is isomorphic to the tournament obtained from it by reversing all arcs. 
It follows that since $D$ is a counterexample to the claim, so is $\revvec{D}$, which is the digraph obtained from $D$ by reversing all its arcs. Evidently, we have $(|V(\revvec{D})|,|A(\revvec{D})|) = (|V(D)|,|A(D)|)$, meaning that $\revvec{D}$ is also a minimal counterexample (in the sense defined above). Since $\delta^-(D)=\delta^+(\revvec{D})$, it suffices to prove $\delta^+(D) \ge 4$.

Suppose towards a contradiction that there exists a vertex $u \in V(D)$ such that $d^+(u) \le 3$. Since $D$ is $4$-dicritical, Lemma~\ref{mindegree} implies that $d^+(u)=3$. We now distinguish between two cases depending on the structure of the out-neighborhood of $u$.

\textbf{Case 1:} There exists some $w \in N^+(u)$ such that $(w,u) \notin A(D)$. 
In this case, let $D'$ be the digraph defined as in Lemma~\ref{lem:butterfly1}. Namely, $D'$ is obtained from $D$ by deleting $u$ and adding the arcs $(x,w)$ for all $x \in N^-(u)$. By Lemma~\ref{lem:butterfly1}, $D'$ contains no subdivision of $\vec{K}_4^s$. Since $|V(D')|=|V(D)|-1$, the minimality assumption on $D$ implies that $\vec{\chi}(D') \le 3$. So let $c':V(D) \setminus \{u\} \rightarrow \{1,2,3\}$ be an acyclic $3$-coloring of $D'$. Write $N_D^+(u)=\{w,w_1,w_2\}$. Fix a color $c_u \in \{1,2,3\}\setminus\{c'(w_1),c'(w_2)\}$ (which clearly exists). Let $c:V(D) \rightarrow \{1,2,3\}$ be the coloring of $D$ defined by $c(x):=c'(x)$ for all $x \in V(D) \setminus \{u\}$ and $c(u):=c_u$. Since $\vec{\chi}(D)=4$, there has to be a directed cycle $C$ in $D$ which is monochromatic under $c$. Clearly, $C$ has to pass through $u$, for otherwise it would have been a monochromatic dicycle already in the coloring $c'$ of $D'$. Since none of the out-arcs $(u,w_1), (u,w_2)$ is monochromatic, we must have $(u,w) \in E(C)$. Let $u' \in N_D^-(u)$ be the unique predecessor of $u$ on $C$. Then  $u' \neq w$ because $(w,u) \notin A(D)$ by assumption. It follows from the definition of $D'$ that replacing the directed subpath $u',(u',u),u,(u,w),w$ of $C$ with the (``direct") arc $(u',w)$ in $D'$ defines a directed cycle $C'$ in $D'$ such that $V(C')=V(C)\setminus \{u\}$. Hence, $C'$ is a monochromatic dicycle in the acyclic coloring $c'$ of $D'$. This contradiction shows that our initial assumption $d^+(u) \le 3$ was wrong.

\textbf{Case 2:} $(w,u) \in A(D)$ for all $w \in N^+(u)$. We claim that in this case, we can find a pair $w_1, w_2 \in N^+(u)$ of distinct neighbors of $u$ such that $(w_1,w_2) \notin A(D)$. Indeed, suppose this were not the case. Then the vertices $\{u\} \cup N^+(u)$ induce a $\bivec{K}_4$ in $D$. However, this clearly means that $D$ contains $\vec{K}_4^s \subseteq \bivec{K}_4$ as a subdigraph, contradicting our initial assumption on $D$. So let us fix, in the following, a pair of distinct $w_1, w_2 \in N^+(u) \subseteq N^-(u)$ such that $(w_1,w_2) \notin A(D)$. Let $D^\ast$ be the digraph obtained from $D$ by 
applying the operation of Lemma~\ref{lem:3intoone} to $\{u,w_1,w_2\}$; that is, we delete $w_1$ and $w_2$ and 
add the arc $(u,x)$ for every $x \in V(D) \setminus \{u,w_1,w_2\}$ which has an in-neighbor in $\{w_1,w_2\}$ and the arc $(x,u)$ for every $x \in V(D) \setminus \{u,w_1,w_2\}$ which has an out-neighbor in $\{w_1,w_2\}$.
By Lemma~\ref{lem:3intoone}, $D^\ast$ does not contain a subdivision of $\vec{K}_4^s$. We clearly have $|V(D^\ast)|<|V(D)|$ and so the minimality assumption on $D$ yields that there is an acyclic $3$-coloring $c^\ast:V(D^\ast) \rightarrow \{1,2,3\}$ of $D^\ast$. Write $N^+(u)=\{w_1,w_2,w_3\}$, and let $c_u \in \{1,2,3\}$ be a color distinct from both $c^\ast(u)$ and $c^\ast(w_3)$. We now define a $3$-coloring $c$ of $D$ by putting $c(x):=c^\ast(x)$ for all $x \in V(D)\setminus \{u,w_1,w_2\}$, $c(u):=c_u$, and $c(w_1):=c(w_2):=c^\ast(u)$. Since $\vec{\chi}(D)=4$, there must be a dicycle $C$ in $D$ which is monochromatic under $c$. Then $C$ cannot contain $u$, for otherwise it would have to leave $u$ through one of the out-arcs $(u,w_1),(u,w_2),(u,w_3)$, but by the definition of the coloring $c$, none of these arcs is monochromatic. On the other hand, we must have $V(C) \cap \{w_1,w_2\}\neq\emptyset$, for otherwise $C$ would be a monochromatic dicycle in $(D^\ast,c^\ast)$, which is impossible. Observe also that $V(C) \setminus \{w_1,w_2\} \neq \emptyset$ because $(w_1,w_2) \notin A(D)$. Let $x_0,x_1,\ldots,x_\ell=x_0$ be the vertex-trace of $C$ in $D$. Now consider the closed sequence $y_0,y_1,\ldots,y_{\ell}=y_0$ of vertices in $D^\ast$, where $y_i:=x_i$ if $x_i \notin \{w_1,w_2\}$ and $y_i:=u$ if $y_i \in \{w_1,w_2\}$. The definitions of $D^\ast$ and $c$ and the fact that $u \notin V(C)$ imply that $c^\ast(y_i)=c(x_i)$ for every $i=1,\ldots,\ell$, and that either $(y_{i-1},y_i) \in A(D^\ast)$ or $y_{i-1}=y_i=u$ for every $i=1,\ldots,\ell$. This means that in $D^\ast$ there is a monochromatic closed directed walk which contains at least two vertices: it contains $u$ because $V(C) \cap \{w_1,w_2\} \neq \emptyset$ and at least one other vertex because $V(C) \setminus \{w_1,w_2\} \neq \emptyset$ and $u \notin V(C)$. Therefore, $D^\ast$ contains a monochromatic dicycle. All in all, this contradicts the fact that $c^\ast$ was chosen as an acyclic coloring of $D^\ast$, implying that our initial assumption $d^+(u) \le 3$ was wrong.

To sum up, we have arrived at a contradiction in both cases, which means that we indeed must have $\delta^+(D) \ge 4$. As argued above, we can derive $\delta^-(D)=\delta^+(\revvec{D}) \ge 4$ with the same arguments applied to the minimal counterexample $\revvec{D}$. This finishes the proof of the claim.
\end{proof}
With Claims 1 and 2 at hand, we can now apply Theorem~\ref{critical strongly connected} to $D$ with $k=2$, and thus obtain a vertex $v \in V(D)$ such that $D-v$ is strongly $2$-vertex-connected. We now complete the proof of the Theorem by explicitly constructing a subdivision of $\vec{K}_4^s$ in $D$. We start with the following observation.
\paragraph{Claim 3.} There are $3$ directed cycles $C_1$, $C_2$, $C_3$ in $D$ such that $V(C_i) \cap V(C_j)=\{v\}$ for any two distinct $i, j \in \{1,2,3\}$.
\begin{proof}
Since $D$ is $4$-dicritical, $D-v$ admits an acyclic coloring with colors $\{1,2,3\}$.
For every $i \in \{1,2,3\}$, if we try and extend this coloring to $D$ by assigning color $i$ to $v$, we have to find a monochromatic directed cycle $C_i$ in $D$, which has to pass through $v$. Note that $V(C_i) \cap V(C_j) = \{v\}$ for all $1 \leq i < j \leq 3$, because all vertices in $V(C_i) \setminus \{v\}$ receive color $i$ ($1 \leq i \leq 3$).
\end{proof}
\noindent
The rest of the proof is divided into two cases depending on the lengths of the cycles $C_i$. 

\textbf{Case 1.} All the three cycles $C_1, C_2, C_3$ have length two, i.e., are digons.
Let $v_1, v_2, v_3 \in V(D)$ be such that $V(C_i)=\{v,v_i\}$ ($i = 1,2,3$). Since $D-v$ is strongly connected, there has to be a directed path in $D-v$ starting in $v_1$ and ending in $\{v_2,v_3\}$. Let $P$ be a shortest such directed path, and without loss of generality assume that it ends in $v_2$. By the minimality assumption on $P$ we know that $v_3 \notin V(P)$. Now put $A:=V(P), B:=N_{D-v}^-(v_3) \subseteq V(D-v)$. We clearly have $|A| \ge |\{v_1,v_2\}|=2, |B|=d^-(v_3)-1 > 2$, and hence we may apply Theorem~\ref{setmenger} to $D-v$ and obtain that there are two vertex-disjoint $A$-$B$-dipaths $P_1'$ and $P_2'$ in $D-v$. We may assume that $v_3 \notin V(P_i')$ for $i=1,2$ (otherwise we can simply delete $v_3$ and all its successors from $P_i'$). 
For $i = 1,2$, let $P_i := P'_i + v_3$, and write $V(P) \cap V(P'_i)=:\{w_i\}$.
Then $P_1$ and $P_2$ only intersect at $v_3$, and $P$ only intersects $P_i$ at $w_i$ (for $i = 1,2$). 
Without loss of generality (by relabeling if necessary), we may assume that when traversing $P$ from $v_1$ towards $v_2$, we first meet $w_1$ before we meet $w_2$. Now let $S$ be the subdigraph of $D$ defined by the union of the following dipaths in $D$: $P$, $P_1$, $P_2$, $(v, (v,v_1), v_1)$, $(v_3, (v_3,v), v)$ and $(v_2, (v_2,v),v)$. It is now easy to observe that $S$ constitutes a subdivision of $\vec{K}_4^s$ whose branch vertices are $v, w_1, w_2, v_3$. This contradicts our initial assumption that $D$ contains no subdivision of $\vec{K}_4^s$. 

\textbf{Case 2.} There is some $i \in \{1,2,3\}$ such that $|V(C_i)| \ge 3$.
Without loss of generality we may assume that $i=1$. Let $v_2$ be the unique out-neighbor of $v$ on $C_2$. Put $A:=N_{D-v}^+(v_2), B:=V(C_1) \setminus \{v\} \subseteq V(D-v)$. Clearly, $|A| \ge 4-1>2, |B| \ge 3-1=2$, and hence we may apply Theorem~\ref{setmenger} to $D-v$ to conclude that there are two vertex-disjoint $A$-$B$-dipaths $P_1'$, $P_2'$ in $D-v$. We may assume that $v_2 \notin V(P_i')$ for $i=1,2$ (otherwise we can simply delete $v_2$ and all its predecessors from $P_i'$). 
For $i=1,2$, let $x_i \in A$ and $y_i \in B$ be the endpoints of $P'_i$. 
It is now clear that the union of $C_1$ and the internally vertex-disjoint dipaths $Q:=(v,(v,v_2),v_2)$, $P_1:=(v_2,(v_2,x_1),P_1')$ and $P_2:=(v_2,(v_2,x_2),P_2')$ is a subdivision of $\vec{K}_4^s$ in $D$ with branch vertices $v,v_2,y_1,y_2$. 
This contradicts our initial assumption that $D$ contains no subdivision of $\vec{K}_4^s$. 

Since we arrived at a contradiction in both cases, it follows that our initial assumption that there exists a (smallest) digraph $D$ with $\vec{\chi}(D) \ge 4$ not containing a subdivision of $\vec{K}_4^s$ was wrong. This finishes the proof.
\end{proof}

We now move on to show that $\text{mader}_{\vec{\chi}}(W_4^+)=4$. This proof is partly inspired by a method used in \cite{HMM}. 

\begin{proof}[Proof of $\text{mader}_{\vec{\chi}}(W_4^+)=4$.]
Suppose towards a contradiction that there exists a digraph $D$ such that $\vec{\chi}(D) \ge 4$, but $D$ contains no subdivision of $W_4^+$. Assume additionally that $D$ lexicographically minimizes the pair $(|V(D)|,|A(D)|)$ (i.e., the number of vertices is minimized with first priority, and the number of arcs is minimized with second priority). Clearly, $|V(D)| \ge 5$ and $D$ is $4$-dicritical. Hence,
 $\delta^+(D), \delta^-(D) \ge 3$ by Lemma~\ref{mindegree}, and $D$ is strongly-connected by Lemma~\ref{strong connectivity}. 
\paragraph{Claim 1.} $D$ is strongly $2$-vertex-connected.
\begin{proof}
Suppose towards a contradiction that there exists a vertex $v \in V(D)$ such that $D-v$ is not strongly connected. This means that $D-v$ has more than one strong component. Let $X \subseteq V(D-v)$ be the vertex set of a strong component of $D-v$ which is a ``sink'' in $D-v$, that is, there is no arc leaving $X$. Let $Y:=V(D) \setminus (X \cup \{v\})$. Then $(X,Y)$ forms a partition of $V(D)\setminus\{v\}$, $D[X]$ is strongly connected and $(x,y) \notin A(D)$ for all $x \in X, y \in Y$. We can therefore apply Lemma~\ref{cutvertices} with $F=W_4^+$ to obtain a pair $D_1,D_2$ of digraphs with vertex-sets $X \cup \{v\}, Y \cup \{v\}$, respectively, such that neither $D_1$ nor $D_2$ contains a subdivision of $W_4^+$ and $4=\vec{\chi}(D) \le \max\{\vec{\chi}(D_1),\vec{\chi}(D_2)\}$. However, this means that there is some $i \in \{1,2\}$ such that $\vec{\chi}(D_i) \ge 4$, $D_i$ contains no $W_4^+$-subdivision and clearly $|V(D_i)|<|V(D)|$. This contradicts the assumed minimality of $D$. This contradiction shows that the assumption was wrong, namely that $D$ is indeed strongly $2$-vertex-connected.
\end{proof}
\paragraph{Claim 2.} The underlying graph of $D$ is $3$-vertex-connected.
\begin{proof}
Suppose towards a contradiction that there is a set $K \subseteq V(D)$ such that $|K| \le 2$ and $D-K$ is not weakly connected. Let $(X_1,X_2)$ be a partition of $V(D)\setminus K$ into two non-empty sets such that there is no arc between $X_1$ and $X_2$ in $D-K$. Since $D$ is strongly $2$-vertex-connected, we must have $|K|=2$, say $K = \{s_1,s_2\}$ for some distinct $s_1,s_2 \in V(D)$. 
For $i = 1,2$, let $D_i$ be the digraph defined as follows:
$V(D_i):=X_i \cup K$ and $A(D_i):=A(D[X_i \cup K]) \cup \{(s_1,s_2),(s_2,s_1)\}$. We claim that none of $D_1, D_2$ contains a subdivision of $W_4^+$. Indeed, suppose towards a contradiction that for some $i \in \{1,2\}$, there exists a subdigraph $S \subseteq D_i$ which is isomorphic to a subdivision of $W_4^+$. If $A(S) \cap \{(s_1,s_2),(s_2,s_1)\}=\emptyset$, then $S$ would also be a subgraph of $D$, contradicting our assumptions on $D$. Hence, $S$ has to contain an arc between $s_1$ and $s_2$, and without loss of generality we may assume that $(s_1,s_2) \in A(S)$. Since $W_4^+$ contains no digons, the same is true for $S$ and hence $(s_2,s_1) \notin A(S)$. We now claim that there exists an $s_1$-$s_2$-dipath in $D-X_i$. To this end, choose an arbitrary vertex $w \in X_{3-i}$. Since both $D-s_1$ and $D-s_2$ are strongly connected (by Claim 1), there are dipaths $P_1$ and $P_2$ in $D-s_1$ resp. $D-s_2$ such that $P_1$ starts at $w$ and ends at $s_2$, while $P_2$ starts at $s_1$ and ends at $w$. 
Since $D$ contains no arcs between $X_1$ and $X_2$, we must have $V(P_1) \subseteq X_{3-i} \cup \{s_2\}$ and $V(P_2) \subseteq X_{3-i} \cup \{s_1\}$. Finally, we see that the concatenation of $P_1$ and $P_2$ is a directed walk from $s_1$ to $s_2$, implying that $P_1 \cup P_2 \subseteq X_{3-i} \cup \{s_1,s_2\} = D-X_i$ contains an $s_1$-$s_2$-dipath $P$. 
Clearly, $P$ is internally vertex-disjoint from all dipaths in $S-(s_1,s_2) \subseteq D$, and hence the subdigraph $S':=(S-(s_1,s_2)) \cup P$ of $D$ is isomorphic to a subdivision of $S$, which in turn is a subdivision of $W_4^+$. This contradicts our initial assumption that $D$ is $W_4^+$-subdivision-free. We conclude that neither $D_1$ nor $D_2$ contains a subdivision of $W_4^+$, as claimed. Since clearly $|V(D_1)|, |V(D_2)|<|V(D)|$, the assumed minimality of $D$ yields that $D_1$ and $D_2$ admit acyclic $3$-colorings $c_1:V(D_1) \rightarrow \{1,2,3\}$ and $c_2:V(D_2) \rightarrow \{1,2,3\}$, respectively. Since the pair $s_1,s_2$ induces a digon in both $D_1$ and $D_2$, we must have $c_1(s_1) \neq c_1(s_2), c_2(s_1) \neq c_2(s_2)$. Hence, possibly after permuting the color set, we may assume that $c_1(s_1)=c_2(s_1)=1, c_1(s_2)=c_2(s_2)=2$. We now claim that the common extension $c:V(D) \rightarrow \{1,2,3\}$ of $c_1$ and $c_2$ to $D$ defines an acyclic coloring of $D$. Indeed, a monochromatic directed cycle $C$ in $(D,c)$ would have to contain vertices of both $X_1$ and $X_2$, for otherwise it would also be a monochromatic dicycle in $(D_1,c_1)$ or $(D_2,c_2)$, contradicting the assumption that these are acyclic colorings. However, since $K = \{s_1,s_2\}$ separates $X_1$ and $X_2$, this is only possible if $\{s_1,s_2\} \subseteq V(C)$. But then $C$ is not monochromatic because $c(s_1) = 1$ and $c(s_2) = 2$. This shows that $c$ is indeed an acyclic coloring of $D$, which in turn contradicts $\vec{\chi}(D)=4$. So we see that our initial assumption that the underlying graph of $D$ admits a $2$-separator was wrong. This concludes the proof of Claim 2.
\end{proof}
\paragraph{Claim 3.} For every $x \in V(D)$ there is a directed cycle $C$ in $D-x$ such that $|V(C)| \ge 3$.
\begin{proof}
Let $x \in V(D)$ be given arbitrarily. Suppose towards a contradiction that every directed cycle in the digraph $D-x$ has length two, i.e., is a digon. Recall that in a strongly connected digraph, every arc lies on a directed cycle. Since $D-x$ is strongly connected (Claim 1), every arc of $D-x$ is contained in a digon, and hence $D-x$ is a symmetric digraph. Since $D - x$ contains no directed cycle of length at least $3$, this is only possible if $D-x$ is a biorientation of a forest. But then the bipartition of this forest defines an acyclic $2$-coloring of $D-x$. By assigning to $x$ a distinct third color, we obtain an acyclic $3$-coloring of $D$, a contradiction to $\vec{\chi}(D)=4$. This proves the claim.
\end{proof}
\paragraph{Claim 4.} For every pair $(x,C)$ of a vertex in $D$ and a directed cycle $C$ in $D-x$ of length at least $3$, there exists a partition $(W,K,Z)$ of $V(D)$ with the following properties: 
\begin{itemize}
\item $x \in W$ and $V(C) \subseteq K \cup Z$
\item There is no arc in $D$ with tail in $W$ and head in $Z$.
\item $|K|=2$.
\end{itemize}
A partition $(W,K,Z)$ with these properties will be called a \emph{good separation} for the pair $(x,C)$.
\begin{proof}
 We claim that there are no three $x$-$V(C)$-dipaths in $D$ which pairwise intersect only at $x$. Indeed, three such dipaths joined with $C$ would form a subdivision of $W_4^+$, which does not exist in $D$ by assumption. By Theorem~\ref{setmenger}, there is a set $K \subseteq V(D)\setminus \{x\}$ of size at most $2$ such that there are no $x$-$V(C)$-dipaths in $D-K$. Let $W \subseteq V(D) \setminus K$ be the set of vertices  reachable in $D - K$ via a dipath starting at $x$, and let $Z:=V(D)\setminus (W \cup K)$. It follows now directly by definition that $x \in W$ and $V(C) \subseteq K \cup Z$, and there is no arc with tail in $W$ and head in $Z$. We have $|K| \le 2$, and since $D$ is strongly $2$-vertex-connected (by Claim 1), it follows that $|K|=2$. Therefore $(W,K,Z)$ is a good separation of the pair $(x,C)$. 
\end{proof}
In the following, for every pair $(x,C)$ of a vertex $x \in V(D)$ and a directed cycle $C$ in $D-x$ of length at least $3$, we denote by $\omega(x,C)$ the minimum of $|W|$ over all good separations $(W,K,Z)$ of $(x,C)$. Let $\omega_0:=\min\{\omega(x,C) \; | \; x \in V(D), C \text{ dicycle in }D-x, |V(C)| \ge 3\}$.

\paragraph{Claim 5.} Let $x \in V(D)$, let $C$ be a directed cycle in $D-x$ of length at least $3$, and let $(W,K,Z)$ be a good separation for $(x,C)$ such that $|W| = \omega(x,C)$. Then every vertex of $W$ is reachable from $x$ by a dipath in $D[W]$. 
\begin{proof}
	Let $W'$ be the set of all vertices $w \in W$ which are reachable from $x$ in $D[W]$. Evidently, $x \in W' \subseteq W$. Observe that $(W',K,(W \setminus W') \cup Z)$ forms a good separation for $(x,C)$, because $D$ has no edge with tail in $W'$ and head in $W \setminus W'$. It follows that $|W'| \geq \omega(x,C) = |W|$. This implies that $W' = W$, as required.  
\end{proof}

\paragraph{Claim 6.} There exists a pair $(x^\ast,C^\ast)$ of a vertex $x^\ast \in V(D)$, a dicycle $C^\ast$ of length at least $3$ in $D-x^\ast$, and there exists a good separation $(W,K,Z)$ of $(x^\ast,C^\ast)$, such that the following hold:
\begin{itemize}
\item $|W|=\omega_0$,
\item there exist $z^\ast \in Z, w^\ast \in W \setminus \{x^\ast\}$ such that $(z^\ast,w^\ast) \in A(D)$.
\end{itemize}
\begin{proof}
Let $(x_0,C_0)$ be a pair of a vertex and a disjoint dicycle in $D$, such that $|V(C_0)| \ge 3$, and such that $(x_0,C_0)$ attains the minimum $\omega_0=\omega(x_0,C_0)$. Let $(W,K,Z)$ be a good separation for $(x_0,C_0)$ such that $|W|=\omega(x_0,C_0)=\omega_0$. Note that $(W,K,Z)$ is also a good separation for every pair $(x,C_0)$ where $x \in W \setminus \{x_0\}$. 


Observe that there has to exist an arc between $Z$ and $W$, for if not, then $D-K$ is not weakly connected, contradicting the facts that $|K|=2$ and that the underlying graph of $D$ is $3$-vertex-connected (Claim 2). As there are no arcs from $W$ to $Z$, there has to exist an arc from $Z$ to $W$. Let $(z_0,w_0)$ be such an arc. If $w_0 \neq x_0$, then we directly obtain that $(x_0,C_0)$ together with $(W,K,Z)$ and the arc $(z_0,w_0)$ satisfy all the required properties in the statement of the claim. If $x_0=w_0$, then choose some $x_1 \in W \setminus \{x_0\}$. Such a selection is possible, since $ N^+(x_0) \setminus K \subseteq W$ and $N^+(x_0) \setminus K \neq \emptyset$ as $d^+(x_0) \ge \delta^+(D) \ge 3$. Since $(W,K,Z)$ is a good separation also for $(x_1,C_0)$, and since $x_1 \neq w_0 = x_0$, it follows now that $(x_1,C_0)$ together with $(W,K,Z)$ and the arc $(z_0,w_0)$ have all the claimed properties. This concludes the proof of Claim 6.
\end{proof}
In the following, let us consider a pair $(x^\ast, C^\ast)$ together with the good separation $(W,K,Z)$ and the arc $(z^\ast,w^\ast)$ as given by Claim 6. Since $D-x^\ast$ is strongly connected (Claim 1), and since $(z^\ast,w^\ast) \in A(D-x^\ast)$, there exists a directed cycle $C'$ in $D-x^\ast$ passing through $(z^\ast,w^\ast)$. As $D$ has no arc from $W$ to $Z$, the dicycle $C'$ must use at least one vertex from $K$, which means that $|V(C')| \ge 3$. Write $K=\{s_1,s_2\}$ and assume (without loss of generality) that $s_1 \in V(C')$. 

By Claim~4, there exists a good separation $(W',K',Z')$ for the pair $(x^\ast,C')$; choose it such that $|W'|=\omega(x^\ast,C')$, and such that it minimizes $|K' \cap Z|$ among all such good separations. We claim that $K' \cap W \neq \emptyset$. Indeed, if we had $K' \cap W = \emptyset$ then we would have $D[W] \subseteq D - K'$, which would imply that $w^\ast \in W$ is reachable from $x^\ast$ by a directed path in $D-K'$ (as every vertex of $W$ is reachable from $x^\ast$ by a dipath in $D[W]$ by Claim 5). However, this would contradict the facts that $x^\ast \in W'$, $w^\ast \in V(C') \subseteq Z'$, and there is no path from $W'$ to $Z'$ in $D - K'$ (by the definition of a good separation). Let us write $K'=\{s_1',s_2'\}$, where $s_1' \in W$.

\paragraph{Claim 7.} $K' \cap Z = \emptyset$.
\begin{proof}


Suppose towards a contradiction that $K' \cap Z \neq \emptyset$,
which means that $s_2' \in Z$ (because $s'_1 \in W$). Let $R$ be the set of vertices  reachable from $x^\ast$ via a dipath in the digraph $D-\{s_1',s_2\}$. We claim that $R \subseteq W'$. Suppose towards a contradiction that $R\setminus W' \neq \emptyset$. Then there is an $(x^\ast,R \setminus W')$-dipath $P$ in $D-\{s_1',s_2\}$. Note that $V(P) \subseteq R$ by the definition of $R$. Let $y \in R \setminus W'$ be the end-vertex of $P$ and $y'$ its predecessor. Then $y' \in W'$ because only the last vertex $y$ of $P$ is in $R \setminus W'$.
Since $(y',y)$ cannot have its tail in $W'$ and head in $Z'$, we must have $y \in (V(D) \setminus \{s_1',s_2\}) \setminus (W' \cup Z') \subseteq \{s_2'\}$; hence $y=s_2' \in Z$. Since $x^\ast \in W$, and since $K$ separates $W$ from $Z$, there must be a vertex on $P-y$ which belongs to $K=\{s_1,s_2\}$. However, this vertex can be neither $s_1$ nor $s_2$; indeed, $s_1 \notin V(P) \setminus \{y\}$ because $V(P) \setminus \{y\} \subseteq W'$ and $s_1 \in V(C') \subseteq K' \cup Z'$, and $s_2 \notin V(P)$ because $P$ is contained in $D-\{s_1',s_2\}$. This contradiction shows that $R \subseteq W'$, as claimed. 

There is no arc from $R$ to $V(D) \setminus (R \cup \{s_1',s_2\}))$ by the definition of $R$, which means that $|\{s_1',s_2\}|=2$ since $D$ is strongly $2$-vertex-connected (by Claim 1). We furthermore have $x^\ast \in R$ and $V(C') \subseteq V(D) \setminus W' \subseteq V(D) \setminus R$. Hence, $(R,\{s_1',s_2\},V(D) \setminus (R \cup \{s_1',s_2\}))$ defines a good separation for the pair $(x^\ast,C')$. By the choice of $(W',K',Z')$, this means that $R=W'$ and $|R|=\omega(x^\ast,C')$. We further have $|\{s_1',s_2\} \cap Z|= 0 < 1 = |K' \cap Z|$ (because $s_1' \in W$ and $s_2 \in K$), contradicting our choice of $(W',K',Z')$. This contradiction shows that the assumption $K' \cap Z \neq \emptyset$ was wrong, concluding the proof of the claim.
\end{proof}

\paragraph{Claim 8.} $W' \cap Z \neq \emptyset$ and $Z' \cap Z \neq \emptyset$.
\begin{proof}
We start by showing that $W' \cap Z \neq \emptyset$. 
Suppose towards a contradiction that $W' \cap Z=\emptyset$. Since $\{ w^*,s_1 \} \subseteq V(C') \subseteq K' \cup Z'$, we have $W' \cap \{ w^*,s_1 \} = \emptyset$ and hence
$W' \subseteq V(D) \setminus (Z \cup \{w^\ast,s_1\}) = (W \cup K) \setminus \{w^\ast, s_1\}$ $=(W \setminus \{w^\ast\}) \cup \{s_2\}$. On the other hand, we have $|W'| = \omega(x^\ast,C') \ge \omega_0=|W|$, and hence $W'=(W \setminus \{w^\ast\}) \cup \{s_2\}$. In particular, $s_2 \in W'$. Since $D$ is strongly $2$-vertex-connected, $s_2$ must have an out-neighbor $y \in Z$, for otherwise there would be no directed path from $x^\ast$ to $Z$ in $D-s_1$. Using Claim 7 and our assumption that $W' \cap Z = \emptyset$, we have $(W' \cup K') \cap Z=\emptyset$, and hence 
$y \in V(D) \setminus (W' \cup K') = Z'$. However, this means that $(s_2,y)$ is an arc from a vertex in $W'$ to a vertex in $Z'$, a contradiction. 
This contradiction shows that the initial assumption $W' \cap Z = \emptyset$ was wrong, proving the first part of claim.

For the second part, recall that $z^\ast \in Z$ and $z^\ast \in V(C') \subseteq K' \cup Z'$. Since $K' \cap Z=\emptyset$ (by Claim 7), we conclude that $z^\ast \in Z' \cap Z$ and hence $Z' \cap Z \neq \emptyset$, as required. 	
\end{proof}


\paragraph{Claim 9.} Every dipath in $D$ starting in $W' \cap Z \neq \emptyset$ and ending in $Z' \cap Z \neq \emptyset$ must contain $s_1$.
\begin{proof}
We first establish that $s_2 \in W'$. To see this, pick some vertex $v \in W' \cap Z$. By Claim 5 and as $|W'| = \omega(x^\ast,C')$, there exists an $x^\ast$-$v$-dipath in $D[W']$. Since $x^\ast \in W$ and $v \in Z$, this dipath must contain a vertex from $K$. However, since $s_1 \notin W'$, this vertex must be $s_2$, implying that $s_2 \in W'$.

Now to prove the claim, let $P$ be a directed path starting in a vertex $a \in W' \cap Z$ and ending in a vertex $b \in Z' \cap Z$. Let $y \in V(P)$ be the last vertex on $P$ contained in $W' \cup K'$ when traversing $P$ starting from $a$. Let $y'$ be the successor of $y$ on $P$; then $(y,y') \in A(D)$ and $y' \in Z'$. Hence, we must have $y \in K'$ (since $D$ has no arcs from $W'$ to $Z'$). It now follows from Claim 7 that 
$y \in W \cup K$. Now let us consider the subpath $P[y,b]$ starting at $y$ and ending at $b$. By definition of $y$, no vertex on $P[y,b]$ is contained in $W'$, and hence $s_2$ does not lie on this path. However, $P[y,b]$ starts in a vertex of $W \cup K$ and ends in a vertex of $Z$, which means that it must contain a vertex from $K=\{s_1,s_2\}$. Hence, $s_1 \in V(P[y,b]) \subseteq V(P)$. This proves the claim.
\end{proof}
Since $s_1$ is contained in none of the two non-empty sets $W' \cap Z$ and $Z' \cap Z$, Claim 9 shows that $D-s_1$ is not strongly connected, contradicting Claim 1. This shows that our very first assumption, namely that a digraph $D$ with $\vec{\chi}(D) \ge 4$ which does not contain a subdivision of $W_4^+$, exists, was wrong. This completes the proof of $\text{mader}_{\vec{\chi}}(W_4^+)=4$.
\end{proof}

\section{Mader-Perfect Digraphs and Open Problems}\label{sec:complete}

In this paper we investigated when the simple inequality
$\text{mader}_{\vec{\chi}}(F) \geq v(F)$ is tight. 
Observe that tightness is trivially preserved under taking spanning subdigraphs. 
It turns out however, that the optimality of the bound does not necessarily carry over to arbitrary subdigraphs. In fact, in Proposition~\ref{prop:addingisolanies} below we show that for any digraph $F$ there exists a constant $k_F$ such that adding $k_F$ isolated vertices to $F$ produces a digraph whose Mader number equals its number of vertices. 
This suggests that the class of digraphs $F$ which satisfy $\text{mader}_{\vec{\chi}}(F)=v(F)$ may not have a meaningful characterization.
This motivates the following definition. We call a digraph $F$ {\em Mader-perfect} if for every (induced) subdigraph $F'$ of $F$, the Mader number of $F'$ equals its order.

\begin{proposition}\label{prop:addingisolanies}
	For every digraph $F$ there exists $k_F \in \mathbb{N}$ such that for every $k \geq k_F$, the digraph $F'$ obtained from $F$ by adding $k$ new isolated vertices satisfies $\text{mader}_{\vec{\chi}}(F')=v(F')$. In fact, it suffices to take $k_F=2\cdot\text{mader}_{\vec{\chi}}(F)-v(F)-1$.
\end{proposition}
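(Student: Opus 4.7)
The lower bound $\text{mader}_{\vec\chi}(F') \geq v(F') = n+k$ is immediate, as noted just before the statement of the proposition. Set $m := \text{mader}_{\vec\chi}(F)$ and $n := v(F)$, so that the hypothesis $k \geq 2m - n - 1$ reads $N := n + k \geq 2m - 1$. By a single use of Observation~\ref{subadditivity} applied to $F' = (F \sqcup k_F K_1) \sqcup (k - k_F) K_1$, it suffices to prove the base case $k = k_F$. So let $D$ be any digraph with $\vec\chi(D) \geq N = 2m - 1$. Our task is to locate an $F$-subdivision $S$ in $D$ with $v(D) - v(S) \geq k$, so that $k$ vertices of $V(D) \setminus V(S)$ will serve as the $k$ isolated vertices required for the $F'$-subdivision.

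The plan is to employ the greedy deletion procedure from the proof of Observation~\ref{subadditivity}. Iteratively delete vertices of $D$ as long as $\vec\chi$ remains at least $m$, obtaining $V_1 \subseteq V(D)$ such that $D[V_1]$ is $m$-vertex-critical; setting $V_2 := V(D) \setminus V_1$, the same argument yields $\vec\chi(D[V_2]) \geq N - m = m - 1$, so $|V_2| \geq m - 1$. Since $\vec\chi(D[V_1]) = m$, there is an $F$-subdivision $S \subseteq D[V_1]$, and the vertices in $V_2 \cup (V_1 \setminus V(S))$, counting $v(D) - v(S)$, are available as candidate isolated vertices. Because $|V_2| \geq m - 1$, our goal $v(D) - v(S) \geq k$ is implied by the sufficient condition $v(S) \leq |V_1| - (m - n)$: $S$ should use at most $|V_1| - m$ subdivision vertices in addition to the $n$ branch vertices of $F$.

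The pivotal case is $|V_1| = m$. Here Lemma~\ref{mindegree} applied to the $m$-dicritical digraph $D[V_1]$ forces $\delta^+(D[V_1]), \delta^-(D[V_1]) \geq m - 1 = |V_1| - 1$, and thus $D[V_1] = \bivec{K}_m$. Since $v(F) = n \leq m$, $F$ itself embeds as a subdigraph of $\bivec{K}_m$, providing a trivial $F$-subdivision with $v(S) = n = |V_1| - (m - n)$, exactly as required. This argument completely handles the extremal situation $v(D) = N = 2m - 1$, in which $V_1 = V(D)$ is forced and $D = \bivec{K}_{2m-1}$.

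The main obstacle is the remaining case $|V_1| > m$, which can occur once $v(D) > 2m - 1$. Here the structural collapse $D[V_1] = \bivec{K}_m$ is unavailable, and we must instead exhibit an $F$-subdivision in $D[V_1]$ with at most $|V_1| - m$ subdivision vertices. The choice $k_F = 2m - n - 1$ is exactly what makes the accounting work: the shortfall $k - |V_2| \leq (2m - n - 1) - (m - 1) = m - n$ of isolated vertices to recover from $V_1$ matches the excess $|V_1| - m$ of vertices in the critical subdigraph. I would address this case by iteratively refining the partition within $V_1$: either a smaller $m$-critical subdigraph (ideally $\bivec{K}_m$) is found, reducing to the previous case, or one uses the strong connectivity of $D[V_1]$ (Lemma~\ref{strong connectivity}) together with the min-degree bound from Lemma~\ref{mindegree} to construct a short $F$-subdivision that absorbs the ``excess'' vertices of $V_1$ economically. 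This delicate case analysis is the technical heart of the proof.
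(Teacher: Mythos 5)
Your proposal has a genuine gap: the main case $|V_1| > m$ is left unproved. You correctly isolate that one needs an $F$-subdivision in $D[V_1]$ that omits at least $m-n$ vertices of $V_1$, but you only establish this in the extremal case $|V_1| = m$ (where $D[V_1]$ collapses to $\bivec{K}_m$); for $|V_1| > m$ you offer vague directions (``iteratively refining the partition,'' ``construct a short $F$-subdivision economically'') rather than an argument. This is not a minor detail — there is no a priori bound on how many vertices a subdivision of $F$ inside an $m$-dicritical digraph must use, so your reduction to ``bounding $v(S)$'' runs into a hard sub-problem that the paper deliberately avoids.

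The paper's proof takes a different tack precisely to sidestep this. It still picks $X \subseteq V(D)$ with $\vec{\chi}(D[X]) = \text{mader}_{\vec{\chi}}(F)$ (so $D[X]$ contains an $F$-subdivision $S$, with no attempt to control $v(S)$), and then fixes an optimal acyclic coloring $Y_1,\dots,Y_m$ of $D - X$ with $m = \vec{\chi}(D-X) \geq k + v(F) - \text{mader}_{\vec{\chi}}(F)$. If at most $v(F)-1$ of the $Y_i$ are singletons, a simple count gives $|V(D) \setminus X| \geq 2m - (v(F)-1) \geq k$, so the $k$ isolated vertices live entirely outside $X$ and hence outside $S$. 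If at least $v(F)$ of the $Y_i$ are singletons, optimality of the coloring forces every pair of those singletons to span a digon, producing a direct (unsubdivided) copy of $F$ on $v(F)$ vertices of $D - X$; deleting those leaves dichromatic number $\geq k$, hence at least $k$ spare vertices. The key insight — that an optimal coloring with many singleton classes yields a bioriented clique — replaces entirely the structural work on $D[V_1]$ that your approach would still need. Your observation that $|V_1| = m$ forces $D[V_1] = \bivec{K}_m$ is in the same spirit as the paper's singleton case, but it only triggers in the extremal situation, whereas the paper's dichotomy is exhaustive.
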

\begin{proof}
	Let $k \ge k_F:=2\cdot\text{mader}_{\vec{\chi}}(F)-v(F)-1$ be arbitrary. 
	Consider any given digraph $D$ such that $\vec{\chi}(D) \ge k+v(F)$. We need to show that $D$ contains a subdivision of $F$ which misses at least $k$ of the vertices of $D$. 
	
	Let $X \subseteq V(D)$ be a vertex set such that $\vec{\chi}(D[X]) = \text{mader}_{\vec{\chi}}(F)$. Then $D[X]$ contains a subdivision of $F$, and we have 
	$m := \vec{\chi}(D-X) \ge k+v(F)-\text{mader}_{\vec{\chi}}(F)$, for otherwise we could color $D$ with less than $k+v(F)$ colors. Let $Y_1,\ldots,Y_m$ be a partition of $V(D)\setminus X$ into $m$ acyclic sets. Let us first consider the case that at most $v(F)-1$ of these sets are singletons. Then 
	$$v(D)-|X|=|Y_1|+\ldots+|Y_m| \ge 2m-(v(F)-1)$$ $$\ge 2k+2v(F)-2 \cdot \text{mader}_{\vec{\chi}}(F)-(v(F)-1)= k+(k-k_F) \ge k.$$ Evidently, the subdivision of $F$ contained in $D[X]$ does not use any of the $\ge k$ vertices in $V(D)\setminus X$, concluding the proof in this case.
	
	In the other case, at least $v(F)$ of the sets $Y_i$ are singletons; without loss of generality, say $Y_i=\{y_i\}$ for $1 \le i \le v(F)$. Since $Y_1\ldots,Y_m$ form an optimal acyclic coloring of $D-X$, we cannot merge any two color classes to obtain an acyclic coloring with $m-1$ colors. It follows that $(y_i,y_j),(y_j,y_i) \in A(D)$ for every $1 \le i<j \le v(F)$. This implies that $D$ contains a copy of $F$ on the vertices $y_1,\ldots,y_{v(F)}$. By deleting the $v(F)$ vertices $y_1,\ldots,y_{v(F)}$, we obtain a digraph of dichromatic number at least $\vec{\chi}(D)-v(F) \ge k$, and hence, the remaining digraph consists of at least $k$ vertices. So we  see that $D$ indeed contains a subdivision of $F$ missing at least $k$ vertices in this second case as well. This concludes the proof.
\end{proof}  


Our main results --- namely Theorems \ref{thm:main1} and \ref{thm:dirdirac} --- can be restated as saying that all octi digraphs and all tournaments of order $4$ are Mader-perfect. For octi digraphs this follows immediately from Theorem \ref{thm:main1}, since octi are closed under taking subdigraphs; and for $4$-vertex tournaments this follows from the fact that every non-spanning subdigraph $F'$ of a $4$-vertex tournament is either an oriented triangle or an oriented path, and for those $F'$ the equality $\text{mader}_{\vec{\chi}}(F')=v(F')$ follows from Theorem \ref{thm:main1}. In a similar vein, Proposition \ref{K_3-e} implies that $\bivec{K}_3-e$ is Mader-perfect. Let us now give the proof.  
\begin{proof}[Proof of Proposition \ref{K_3-e}]
	It is sufficient to show that every $3$-dicritical digraph $D$ contains a subdivision of $\bivec{K}_3-e$. So let $D$ be a $3$-dicritical digraph. 
	Then $\delta^+(D),\delta^-(D) \geq 2$ and $D$ is strongly-connected, as guaranteed by Lemmas \ref{mindegree} and \ref{strong connectivity}, respectively. By Theorem \ref{critical strongly connected} applied for $k=1$, there is $v \in V(D)$ such that $D - v$ is strongly connected. Since $D$ is $3$-dicritical, there exists an acyclic $2$-coloring $c : V(D) \setminus \{v\} \rightarrow \{1,2\}$. Evidently, $c$ cannot be extended to an acyclic $2$-coloring of $D$. This means that for each $i = 1,2$, $D$ contains a directed cycle $C_i$ which contains $v$, such that all vertices in $V(C_i) \setminus \{v\}$ are colored with color $i$ (under $c$). Note that $V(C_1) \cap V(C_2) = \{v\}$. Since $D - v$ is strongly-connected, there is a path in $D$ from $V(C_1)$ to $V(C_2)$ which avoids $v$. Let $P$ be a shortest path from $V(C_1)$ to $V(C_2)$ avoiding $v$, and let us denote the endpoints of $P$ by $x_1,x_2$ (where $x_i \in V(C_i)$). The minimality of $P$ implies that $V(P) \cap V(C_i) = \{x_i\}$ (for each $i = 1,2$), since otherwise $P$ could be replaced by a shorter path. Now it is easy to see that the vertices $v,x_1,x_2$ and the (internally vertex-disjoint) dipaths $C_1[v,x_1], C_1[x_1,v],C_2[v,x_2],C_2[x_2,v],P$ form a subdivision of $\bivec{K}_3 - e$, as required.
\end{proof}

Altogether, we see that the class of Mader-perfect digraphs is quite rich. We believe it would be interesting to obtain a precise characterization of this class. 
\begin{problem}
	Characterize Mader-perfect digraphs. 
\end{problem}

On the negative side, $\bivec{K}_3$ is the smallest digraph $F$ satisfying $\text{mader}_{\vec{\chi}}(F) > v(F)$, hence no bioriented clique of order at least $3$ is Mader-perfect. 
In fact for any $k \ge 3$, the digraph obtained from $\bivec{K}_{k+2}$
by removing a bioriented $\bivec{C}_5$ has dichromatic
number $k$ but contains no subdivision of $\bivec{K}_k$. This shows that the Mader number of $\bivec{K}_k$ is at least $k+1$.

Already determining $\text{mader}_{\vec{\chi}}(\bivec{K}_3)$ exactly seems to be a challenging problem. From above we can only show that $\text{mader}_{\vec{\chi}}(\bivec{K}_3) \le 9$, where the upper bound follows from a combination of Proposition~\ref{K_3-e}  and Theorem \ref{addingarcs} below. We believe that the truth lies with the lower bound, provided by the above construction.
\begin{conjecture}
We have $\text{mader}_{\vec{\chi}}(\bivec{K}_3)=4$, i.e., every digraph $D$ with no $\bivec{K}_3$-subdivision admits an acyclic $3$-coloring.
\end{conjecture}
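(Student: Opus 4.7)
The plan is to take a minimum counterexample $D$ (minimizing $(|V(D)|,|A(D)|)$ lexicographically) to the statement that every digraph with $\vec{\chi}\geq 4$ contains a $\bivec{K}_3$-subdivision, and derive a contradiction along the lines of the proofs of $\text{mader}_{\vec{\chi}}(\vec{K}_4^s)=4$ and $\text{mader}_{\vec{\chi}}(W_4^+)=4$ in Section \ref{sec:tourns}. Such a $D$ is $4$-dicritical, so by Lemmas \ref{mindegree} and \ref{strong connectivity} we have $\delta^+(D),\delta^-(D)\geq 3$ and $D$ is strongly connected. The next step is to establish strong $2$-vertex-connectivity by a cut-vertex reduction. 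Since $\bivec{K}_3$ is not a sink-free orientation of a cubic graph, Lemma \ref{cutvertices} does not apply verbatim, but the reduction should adapt: if $v$ is a cut-vertex separating $V(D)\setminus\{v\}$ into $X\sqcup Y$ with $D[X]$ strongly connected, build the natural auxiliary digraphs $D_1,D_2$ on $X\cup\{v\}$ and $Y\cup\{v\}$, verify that any $\bivec{K}_3$-subdivision in either $D_i$ pulls back to one in $D$, and that $\vec{\chi}(D)\leq \max(\vec{\chi}(D_1),\vec{\chi}(D_2))$, contradicting minimality.

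With strong $2$-vertex-connectivity in hand, I would apply Theorem \ref{critical strongly connected} (with $k=1$) to obtain $v\in V(D)$ such that $D-v$ remains strongly connected. Since $D$ is $4$-dicritical, $D-v$ admits an acyclic $3$-coloring $c$; extending $c$ to $D$ by assigning $v$ each of the three colors in turn must produce three monochromatic dicycles $C_1,C_2,C_3$ through $v$, with $V(C_i)\setminus\{v\}\subseteq c^{-1}(i)$, pairwise meeting only at $v$. This mirrors the setup in the proof of Proposition \ref{K_3-e}, but with an extra cycle available. Applying that proof verbatim to $v,C_1,C_2$ together with a shortest $V(C_1)$-$V(C_2)$ dipath in $D-v$ already yields a $\bivec{K}_3-e$ subdivision with branch vertices $v,x_1,x_2$, where $x_i\in V(C_i)$, and five pairwise internally disjoint dipaths.

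The crux is to promote this $\bivec{K}_3-e$ subdivision to a $\bivec{K}_3$ subdivision by routing a sixth dipath (subdividing the missing arc between $x_1$ and $x_2$) through $D$ while avoiding the internal parts of the five already-used dipaths. Here one would exploit the third cycle $C_3$ together with strong connectivity and Menger's theorem (Theorem \ref{setmenger}) to produce the missing dipath; most likely one needs an extremal choice of $v$ and of the $\bivec{K}_3-e$ subdivision, and case analysis based on whether individual $C_i$ are digons or longer cycles, on the intersection pattern of $C_3$ with $C_1,C_2$, and on Kempe-type switches (Lemma \ref{componentswitch}) to reshape the coloring. This routing step is the main obstacle: if the initial configuration saturates too many vertices along the five committed dipaths, the sixth dipath can be blocked by a $2$-vertex-cut in the residual digraph, forcing one to re-select the structure. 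Managing this systematically is precisely what makes the conjecture hard and the reason it remains open.
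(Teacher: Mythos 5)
This statement is an open conjecture in the paper; the authors give no proof, only the lower bound $\text{mader}_{\vec{\chi}}(\bivec{K}_3)\ge 4$ from the explicit construction ($\bivec{K}_5$ minus a bioriented $\bivec{C}_5$) and the upper bound $\text{mader}_{\vec{\chi}}(\bivec{K}_3)\le 9$ obtained by combining Proposition~\ref{K_3-e} with Theorem~\ref{addingarcs}. So there is no ``paper proof'' to compare against, and your proposal should be judged on whether it actually establishes the claim.

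It does not, and to your credit you say so explicitly: the final paragraph concedes that the routing of the sixth subdivision-path is exactly the step you cannot carry out, and that this is why the conjecture remains open. A sketch that stops at the crux is not a proof. Beyond that admitted gap, there is a concrete technical issue earlier on that you gloss over: Lemma~\ref{cutvertices} and its engine Lemma~\ref{lem:butterfly1} are stated for $F$ a \emph{sink-free orientation of a cubic graph}, and their proofs use both that $F$ is digon-free and that every branch vertex has total degree exactly $3$ with at least one out-arc. The digraph $\bivec{K}_3$ satisfies neither: it is symmetric (every arc lies in a digon) and its branch vertices have in- and out-degree $2$, i.e.\ total degree $4$. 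The case analysis in Lemma~\ref{lem:butterfly1} (which re-attaches a deleted vertex by choosing ``the unique third neighbor'' of $w$ with an outgoing arc) therefore does not go through, and it is not obvious how to rework the vertex-splitting so that a $\bivec{K}_3$-subdivision in the contracted digraph pulls back. So even the reduction to strong $2$-connectivity, which you treat as routine, would need a new argument. Until both that reduction and the sixth-path routing are supplied, this remains an outline of a plausible attack rather than a proof, consistent with the paper's classification of the statement as a conjecture.
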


It is natural to ask how dense Mader-perfect digraphs can be. For $k \in \mathbb{N}$, let $m(k)$ denote the maximum possible number of arcs of a Mader-perfect digraph of order $k$. 
Using a variant of the classical probabilistic argument of Erd\H{o}s and Fajtlowicz \cite{erdosfajtlowicz}, we can show that $m(k)=O(k^{3/2}\sqrt{\log k})$, which means that Mader-perfect digraphs have to be (at least somewhat) sparse. 
In fact, let us show the slightly more general claim that 
\begin{equation}\label{eq:prob_lower_bound}
\text{mader}_{\vec{\chi}}(F) \ge \frac{cm^2}{k^2\log m}
\end{equation}
for every digraph $F$ on $k$ vertices and $m \geq c_1 k \log k$ arcs, where $c_1$ is a suitably large absolute constant. The bound \eqref{eq:prob_lower_bound} shows that if $\text{mader}_{\vec{\chi}}(F) = v(F) = k$ (which has to be the case if $F$ is Mader-perfect), then $m = O(k^{3/2}\sqrt{\log k})$, as claimed.

To prove \eqref{eq:prob_lower_bound}, consider any fixed digraph $F$ consisting of $k \ge 2$ vertices and 
$m \geq c_1 k \log k$ arcs. Let $D(n,p)$ be the random digraph\footnote{Recall that $D(n,p)$ is the random digraph on the vertex-set $\{1,\ldots,n\}$, where for each $1 \leq i \neq j \leq n$ we put the arc $(i,j)$ independently with probability $p$.} with parameters $n=\lfloor \frac{m}{2}\rfloor$ and $p=\frac{m}{4k^2}$. We claim that with positive probability, $D(n,p)$ contains neither a set of $k$ vertices spanning at least $m/2$ arcs nor an acyclic set of more than $\frac{c_2k^2\log m}{m}$ vertices for some suitable absolute constant $c_2$. To see this, note that the expected number of arcs spanned by some fixed set of $k$ vertices in $D(n,p)$ is $pk(k-1)<\frac{m}{4}$, and hence the Chernoff-bound yields that the probability that some $k$ fixed vertices span at least $\frac{m}{2}$ arcs is bounded by $\exp(-\frac{m}{12})$. Therefore the probability that there are $k$ vertices spanning at least $\frac{m}{2}$ arcs is at most 
$\binom{n}{k}\exp(-\frac{m}{12}) \leq (m/2)^k\exp(-\frac{m}{12}) = 
\exp(k\log (m/2)-\frac{m}{12})<\frac{1}{2}$, provided $c_1$ is chosen large enough. 

Similarly, the probability that any fixed set of $\alpha > \frac{c_2k^2\log m}{m}$ vertices is acyclic in $D(n,p)$ is at most $\alpha!(1-p)^{\binom{\alpha}{2}}$. Hence, the probability that $D(n,p)$ contains an acyclic set of size at least $\alpha$ is at most $\binom{n}{\alpha}\alpha!(1-p)^{\binom{\alpha}{2}} \le \exp(\alpha \log n-p\binom{\alpha}{2})<1/2$, provided $c_2$ is chosen large enough (where in the last inequality we plugged in our choice of $n$, $p$ and $\alpha$).

We conclude that there exists a digraph $D$ on $n=\lfloor \frac{m}{2}\rfloor$ vertices containing no $k$ vertices spanning at least $\frac{m}{2}$ arcs and whose dichromatic number is at least $$\vec{\chi}(D) \ge \frac{n}{\left(\frac{c_2k^2\log m}{m}\right)}=\Omega\left(\frac{m^2}{k^2\log m}\right).$$  
Observe that $D$ contains no subdivision of $F$; indeed, if $D$ contained a subdivision of $F$, then since $D$ has at most $\frac{m}{2}$ vertices, at least $\frac{m}{2}$ of the subdivision paths would have to be of length $1$, i.e. be ``direct" arcs between the $k$ branch vertices of the subdivision. But this is impossible as $D$ contains no set of $k$ vertices spanning at least $\frac{m}{2}$ arcs, a contradiction. 
This proves \eqref{eq:prob_lower_bound}. 

We note that if $F$ is symmetric, i.e. if it is a biorientation of an undirected graph, then we can improve the bound \eqref{eq:prob_lower_bound} to $\text{mader}_{\vec{\chi}}(F) \ge \Omega(m/\log m)$. To see this, let $D$ be a tournament of order $m/2$ and dichromatic number $\Omega(m/\log m)$ (it is well-known that such tournaments exist, see \cite{Erdos}). Then $D$ contains no subdivision of $F$. Indeed, since $D$ contains no digons, any subdivision of $F$ in $D$ must contain at least $m/2$ subdivision vertices, one per every digon in $F$. But as $v(D) = m/2 < m/2 + v(F)$, there are not enough vertices in $D$ to fit a subdivision of $F$. As a corollary, we see that if $F$ is Mader-perfect and symmetric, then $a(F) \leq O(k \log k)$, \nolinebreak where \nolinebreak $k = \nolinebreak v(F)$. 


So far we have shown that $m(k)=O(k^{3/2}\sqrt{\log k})$.
As for a lower bound, consider the digraph obtained from $\bivec{K}_3 - e$ by performing $k-3$ ear additions, where at each step we attach a digon to the existing digraph. Then the resulting digraph $F_k$ has $k$ vertices and $2k-1$ arcs. By combining Proposition \ref{K_3-e} with Theorem \ref{kempe}, we see that 
$F_k$ is Mader-perfect. Hence, $m(k) \geq 2k-1$. 
It would be interesting to close the gap between the upper and lower bounds. We conjecture that the truth lies with the latter. 
\begin{conjecture}
	$m(k)=O(k)$.
\end{conjecture}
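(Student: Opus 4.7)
The plan splits into two parts: bounding the digon arcs and bounding the single (non-digon) arcs of a Mader-perfect digraph separately.

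For the digon arcs, the first step would be to prove that no bioriented cycle is Mader-perfect, i.e.\ $\text{mader}_{\vec{\chi}}(\bivec{C}_\ell) > \ell$ for every $\ell \ge 3$. For $\ell = 3$, this is precisely the paper's own conjecture $\text{mader}_{\vec{\chi}}(\bivec{K}_3) = 4$, whose lower bound of $4$ follows from the construction $\bivec{K}_{\ell+2} - \bivec{C}_5$ mentioned earlier in the section. For $\ell \ge 4$ I would seek analogous constructions, for instance by deleting a carefully chosen bioriented substructure from $\bivec{K}_{\ell+c}$ to destroy all $\bivec{C}_\ell$-subdivisions while keeping $\vec{\chi} > \ell$. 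Since Mader-perfectness is hereditary, this would force the digon subgraph of every Mader-perfect digraph on $k$ vertices to be a forest, contributing at most $2(k-1)$ digon arcs.

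For the remaining single arcs, I would look for small digon-free digraphs that are not Mader-perfect and use them as additional forbidden configurations. A clean concrete target would be to show that every Mader-perfect digraph admits a vertex of total degree at most some absolute constant $C$; an induction on the vertex count would then yield $m(k) \le Ck$. Such a bounded-degree statement is suggested by the extremal construction $F_k$ from this section, in which the digon-leaves attached via ear additions always provide vertices of degree $2$, and it is methodologically aligned with the minimum-degree arguments of Lemma~\ref{mindegree} and the vertex-deletion arguments of Theorem~\ref{critical strongly connected} used in Section~\ref{sec:tourns}.

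The main obstacle is this second step. The probabilistic argument in the excerpt gives $\text{mader}_{\vec{\chi}}(F) = \Omega(m^2/(k^2 \log m))$ and hence only $m = O(k^{3/2}\sqrt{\log k})$, while for symmetric $F$ the tournament-based argument yields just $m = O(k \log k)$. Both fall short of the conjectured linear bound by at least a logarithmic factor, so purely density-based constructions of non-Mader-perfect digraphs cannot close the gap, and genuinely structural input will be necessary. A natural route would be a classification of the ``building blocks'' from which every Mader-perfect digraph can be constructed, in the spirit of the ear-addition recursive definition of octi (Definition~\ref{def:octi}); such a characterization would most likely also resolve the Problem posed earlier in this section.
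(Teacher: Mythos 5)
The statement you are addressing is a \emph{conjecture} in the paper; the authors do not prove $m(k)=O(k)$, and in fact the best upper bound they establish is $m(k)=O\bigl(k^{3/2}\sqrt{\log k}\bigr)$, with $O(k\log k)$ in the symmetric case. So there is no ``paper's own proof'' to compare against. Your submission is, correctly and explicitly, a research plan rather than a proof, and you have honestly flagged the main obstacle yourself. Let me focus on where the plan itself is shaky.

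Your first step assumes that $\text{mader}_{\vec{\chi}}(\bivec{C}_\ell) > \ell$ for every $\ell \ge 3$, so that no Mader-perfect digraph can contain a bioriented cycle and the digon subgraph must therefore be a forest. This is solid for $\ell = 3$: the construction $\bivec{K}_5 - \bivec{C}_5$ indeed shows $\text{mader}_{\vec{\chi}}(\bivec{K}_3) \geq 4$. But for $\ell \geq 4$ the situation is genuinely open, and the paper's own phrasing cuts the other way: the Problem posed in Section~\ref{sec:complete} literally asks ``is it the case that $\text{mader}_{\vec{\chi}}(\bivec{C}_\ell)=\ell$ for all $\ell \geq 4$?'', i.e.\ the authors entertain the possibility that $\bivec{C}_\ell$ for $\ell \ge 4$ \emph{is} Mader-perfect. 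Since every proper subdigraph of $\bivec{C}_\ell$ is a bioriented forest (hence an octus with Mader number equal to its order by Theorem~\ref{thm:main1}), a single equality $\text{mader}_{\vec{\chi}}(\bivec{C}_\ell)=\ell$ would make $\bivec{C}_\ell$ Mader-perfect and collapse your ``digon subgraph is a forest'' premise. Note that this would not falsify the target conjecture $m(k)=O(k)$ --- $\bivec{C}_\ell$ has only $2\ell$ arcs --- but it would invalidate your intended route to it. So this step rests on a hypothesis that is not only unproven but is one the authors seem unsure about.

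Your second step --- that every Mader-perfect digraph has a vertex of total degree at most some absolute constant, from which $m(k) \leq Ck$ follows by deleting that vertex and induction (using heredity of Mader-perfectness) --- is logically clean as an inductive scheme, and if it worked it would in fact subsume step one entirely, making the digon-forest discussion redundant. But the bounded-degree claim is itself of comparable difficulty to the conjecture; the analogy with the degree-$2$ leaves of $F_k$ and with Lemma~\ref{mindegree} is suggestive but gives no mechanism for ruling out Mader-perfect digraphs in which every vertex has unbounded degree. Nothing in the paper's toolkit (the probabilistic density bound, the tournament argument, or the critical-digraph degree bounds) yields a constant degree bound. In short, both steps of the plan are conditional on substantial open sub-conjectures, the first of which may well be false, and you have correctly identified that genuinely structural (as opposed to density-based) input is needed but not supplied it.
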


Aboulker et al.~\cite{aboulker} studied the behaviour of the Mader number with respect to the insertion of arcs, and proved the following bound using a beautiful argument based on breadth-first-search \nolinebreak trees.
\begin{theorem}[\cite{aboulker}, Lemma 31]\label{addingarcs}
If $F$ is a digraph and $e \in A(F)$, then
$$\text{mader}_{\vec{\chi}}(F) \le 4\cdot \text{mader}_{\vec{\chi}}(F-e)-3.$$ 
\end{theorem}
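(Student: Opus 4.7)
The plan is to show that any digraph $D$ with $\vec{\chi}(D) \geq 4k-3$ contains a subdivision of $F$, where $k := \text{mader}_{\vec{\chi}}(F-e)$ and $e = (u,v)$; note that $k \geq v(F-e) \geq 2$ since $u \neq v$. By passing to a dicritical subdigraph I may assume $D$ is strongly connected (Lemma~\ref{strong connectivity}). Fix a root $r \in V(D)$ and take a BFS out-arborescence $T^+$ and a BFS in-arborescence $T^-$ of $D$ rooted at $r$. Write $\ell^+(v) := d_D(r,v)$ and $\ell^-(v) := d_D(v,r)$, so that every arc $(x,y) \in A(D)$ satisfies $\ell^+(y) \leq \ell^+(x)+1$ and $\ell^-(x) \leq \ell^-(y)+1$, and define the block $B_{i,j} := \{v \in V(D) : \ell^+(v)=i,\; \ell^-(v)=j\}$.

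The first step is to establish the partition bound $\vec{\chi}(D) \leq 4 \max_{i,j}\vec{\chi}(D[B_{i,j}])$. I color each vertex by the $4$-color $(\ell^+(v) \bmod 2,\, \ell^-(v) \bmod 2)$. Along any monochromatic dicycle each arc $(x,y)$ has $\ell^+(y) - \ell^+(x) \leq 1$ of the same parity as $0$, hence $\leq 0$; summing to zero around the cycle forces this difference to vanish on every arc, and the analogous argument on $\ell^-$ confines the dicycle to a single block $B_{i,j}$. Setting $m := \max_{i,j}\vec{\chi}(D[B_{i,j}])$, the subdigraph on each of the four outer color classes then admits an acyclic $m$-coloring (colors can be reused across distinct blocks of the same class), and four disjoint $m$-palettes produce an acyclic $4m$-coloring of $D$. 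Hence $4m \geq 4k-3$, so $m \geq k$, and some $B_{i_0,j_0}$ satisfies $\vec{\chi}(D[B_{i_0,j_0}]) \geq k$. Because $B_{0,0} = \{r\}$ and all other blocks $B_{0,j}$, $B_{i,0}$ are empty, the inequality $k \geq 2$ forces $i_0, j_0 \geq 1$.

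By the definition of $k$, $D[B_{i_0,j_0}]$ contains a subdivision $S$ of $F-e$; let $S_u, S_v \in B_{i_0,j_0}$ be its branch vertices corresponding to $u$ and $v$. To subdivide the missing arc $e$, I concatenate the unique $T^-$-dipath from $S_u$ to $r$ with the unique $T^+$-dipath from $r$ to $S_v$, forming a directed walk $W$ from $S_u$ to $S_v$. The interior vertices of the first piece have $\ell^- < j_0$, the interior vertices of the second have $\ell^+ < i_0$, and $r$ itself has $(\ell^+(r), \ell^-(r)) = (0,0)$, so every interior vertex of $W$ lies outside $B_{i_0,j_0} \supseteq V(S)$. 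Shortcutting $W$ to a simple $S_u$-$S_v$-dipath $P$ only removes interior vertices, so $V(P) \cap V(S) \subseteq \{S_u, S_v\}$, and $S \cup P$ is the desired subdivision of $F$ in $D$.

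I expect the main nontrivial point to be the first step's use of \emph{both} arborescences simultaneously. A one-dimensional analogue using only $\ell^+$ (with a $\bmod\,4$ coloring, say) would still yield some level of large dichromatic number inside which an $F-e$ subdivision exists, but there would be no canonical way to route the subdivision path for $e$ without re-entering $V(S)$. The two-dimensional block structure $B_{i_0,j_0}$ is precisely what forces the $T^-$-path to leave the block immediately at $S_u$ and the $T^+$-path to hit it only at $S_v$, producing the missing $S_u$-$S_v$-dipath essentially for free.
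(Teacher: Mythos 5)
Your proof is correct. The paper cites this result from Aboulker et al.\ without reproducing the argument, but describes it as ``a beautiful argument based on breadth-first-search trees,'' and your reconstruction -- partitioning via the parities of the two BFS distances $\ell^+$ and $\ell^-$ to confine monochromatic dicycles to a single block $B_{i_0,j_0}$, extracting an $(F-e)$-subdivision there, and then routing the missing arc through $r$ along the two arborescences so that the path's interior necessarily exits the block -- is precisely that argument.
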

Using this upper bound and a lower-bound-construction for tournaments, they obtained the following bounds on $\text{mader}_{\vec{\chi}}(\bivec{K}_n)$.
\begin{theorem}
For every $n \in \mathbb{N}$, 
$$\Omega\left(\frac{n^2}{\log n}\right) \le \text{mader}_{\vec{\chi}}(\bivec{K}_n) \le 4^{n^2-2n+1}(n-1)+1.$$
\end{theorem}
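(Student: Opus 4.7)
The plan is to treat the upper and lower bounds separately. The upper bound will follow by iterating Theorem~\ref{addingarcs} from a carefully chosen sparse spanning subdigraph of $\bivec{K}_n$ whose Mader number is already known. The lower bound will come from invoking the existence of a tournament on $\Theta(n^2)$ vertices with dichromatic number $\Omega(n^2/\log n)$, combined with a counting argument showing such a tournament cannot host a subdivision of $\bivec{K}_n$.

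For the upper bound, I would start with the directed Hamilton path $P = v_1 \to v_2 \to \cdots \to v_n$, viewed as a spanning subdigraph of $\bivec{K}_n$ with $n-1$ arcs. Since $P$ is an oriented path, it is an octus, and Theorem~\ref{thm:main1} yields $\text{mader}_{\vec{\chi}}(P) = n$. As $\bivec{K}_n$ has $n(n-1)$ arcs, exactly $n(n-1) - (n-1) = (n-1)^2$ arcs remain to be inserted to pass from $P$ to $\bivec{K}_n$. Applying Theorem~\ref{addingarcs} once per inserted arc, the recursion $f(k) \le 4f(k-1) - 3$ with $f(0) = n$ solves explicitly to $f(k) \le 4^k(n-1) + 1$. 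Substituting $k = (n-1)^2 = n^2 - 2n + 1$ yields the claimed upper bound $\text{mader}_{\vec{\chi}}(\bivec{K}_n) \le 4^{n^2 - 2n + 1}(n-1) + 1$.

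For the lower bound, I would invoke the classical result (as mentioned in the probabilistic discussion just preceding this theorem) that for every positive integer $N$ there exists a tournament $T_N$ on $N$ vertices with $\vec{\chi}(T_N) = \Omega(N / \log N)$. Setting $N := n + \binom{n}{2} - 1 = \Theta(n^2)$ gives a tournament with $\vec{\chi}(T_N) = \Omega(n^2 / \log n)$. I then claim that $T_N$ contains no subdivision of $\bivec{K}_n$. Indeed, $\bivec{K}_n$ has $\binom{n}{2}$ digons, and for each digon $\{(u,v),(v,u)\}$ the two anti-parallel subdivision dipaths in $T_N$ cannot both consist of a single arc (since $T_N$ is digon-free), so at least one of them contributes an internal subdivision vertex. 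Because distinct subdivision dipaths are internally vertex-disjoint by definition, these $\binom{n}{2}$ internal vertices are distinct, and together with the $n$ branch vertices they force $v(T_N) \ge n + \binom{n}{2} > N$, a contradiction. Hence $\text{mader}_{\vec{\chi}}(\bivec{K}_n) > \vec{\chi}(T_N) = \Omega(n^2/\log n)$.

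The only genuinely delicate point is the vertex count in the lower bound, specifically the observation that each digon of $\bivec{K}_n$ forces a fresh internal subdivision vertex when the host is digon-free. Apart from that, both bounds reduce to straightforward applications of results already established (Theorems \ref{thm:main1} and \ref{addingarcs}) or cited (the Erd\H{o}s-type tournament construction).
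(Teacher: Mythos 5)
Your proof is correct and follows the same route as the paper (and as Aboulker et al., to whom this theorem is attributed): the upper bound comes from iterating the arc-addition recursion of Theorem~\ref{addingarcs} starting from an $(n-1)$-arc spanning oriented path whose Mader number is $n$, and the lower bound comes from a tournament of order $\Theta(n^2)$ with dichromatic number $\Omega(n^2/\log n)$ together with the observation that a digon-free host needs a distinct internal subdivision vertex for each of the $\binom{n}{2}$ digons of $\bivec{K}_n$ --- exactly the argument the paper gives for the general symmetric case.
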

Consider the digraph $F_n$ on $n$ vertices and $2n-1$ as constructed above. Then we have
$\text{mader}_{\vec{\chi}}(F_n) = n$, and hence by starting from $F_n$ and repeatedly applying Theorem \ref{addingarcs}, we get the (slightly) improved bound 
$\text{mader}_{\vec{\chi}}(\bivec{K}_n) \leq 4^{n^2-3n+1}(n-1) + 1$.   

Still, the gap between the lower and upper bounds on $\text{mader}_{\vec{\chi}}(\bivec{K}_n)$ remains huge. Unfortunately, the techniques used in this paper to tackle sparse digraphs do not seem to allow for substantial improvements of the upper bound. 
Our attempts to improve the lower bound to a super-quadratic growth have also been unsuccessful. 
It is tempting to conjecture the following:
\begin{conjecture}\label{con:quadratic}
There exists an absolute constant $c>0$ such that $\text{mader}_{\vec{\chi}}(\bivec{K}_n) \le cn^2$ for every positive integer $n$.
\end{conjecture}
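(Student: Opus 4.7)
\medskip

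\noindent\textbf{Proof proposal.} The plan is to mimic the structure of the undirected Bollob\'as--Thomason/Koml\'os--Szemer\'edi argument but use the dichromatic number where density is used in the undirected case. Start with a $k$-dicritical digraph $D$ with $k=Cn^2$ for a sufficiently large absolute constant $C$. Lemmas~\ref{mindegree} and \ref{strong connectivity} give $\delta^+(D),\delta^-(D)\ge Cn^2-1$ and strong connectivity, and a repeated application of Theorem~\ref{critical strongly connected}, together with passing to a dicritical subdigraph whenever the minimum semi-degrees fall below twice the current target connectivity, should yield a subdigraph $D'\subseteq D$ which is strongly $2n$-vertex-connected and still has dichromatic number $\Omega(n^2)$. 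This reduction is safe because each time we peel off a non-critical vertex we lose only one from the connectivity target and, by dicriticality, at most a constant from the dichromatic number.

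Inside $D'$ the plan is to build the $\bivec{K}_n$-subdivision greedily, adding one branch vertex at a time. At step $i$, we have branch vertices $v_1,\dots,v_i$ and, for every ordered pair, an internally disjoint $v_j$-$v_\ell$-dipath; we pick a fresh branch vertex $v_{i+1}$ from a part of $D'$ that is still highly dicritical after deleting the already-used internal vertices, and we use Theorem~\ref{setmenger}(i) to produce $2i$ openly disjoint dipaths between $v_{i+1}$ and $\{v_1,\dots,v_i\}$, with the right directions, inside a subdigraph $D'_i$ obtained from $D'$ by deleting internal vertices of the previously constructed paths. The key accounting step is that each path removed costs $O(1)$ from the dichromatic number of the remaining digraph (by removing one vertex at a time from a dicritical subdigraph), so after removing the $O(n^2)$ internal vertices used so far, $\vec{\chi}$ has dropped by at most $O(n^2)$, leaving enough to continue.

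The critical mechanism for producing paths with the correct orientation is a directed Kempe-switch argument in the spirit of Theorem~\ref{kempe}. Concretely, at each stage we would fix an acyclic $(\vec{\chi}(D'_i)-1)$-coloring of $D'_i-v_{i+1}$ that is extremal with respect to a lexicographic order tracking the distribution of colors on $N^+(v_{i+1})$ and $N^-(v_{i+1})$. As in the proof of Theorem~\ref{kempe}, the failure to extend the coloring to $v_{i+1}$ forces monochromatic dicycles through $v_{i+1}$ in each color class, and the extremality of the coloring then yields the required two-colored connections between out- and in-neighbors of $v_{i+1}$ and the previously chosen branch vertices, giving the $i$ out- and $i$ in-dipaths simultaneously. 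The fact that we need $2i$ orientation-specified paths, not just $2i$ paths, is what forces the use of the Kempe-type argument on top of plain Menger-connectivity.

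The hardest step, and the one which presently blocks a full proof, will be the orientation-correct linkage: Thomassen's construction shows that strong connectivity alone does not force even pairs of vertex-disjoint dipaths with prescribed endpoints in a directed graph, so the argument must genuinely exploit the high dichromatic number, not just the connectivity left after deletions. A viable refinement would be to prove an auxiliary lemma of the form ``every strongly $n$-vertex-connected digraph of dichromatic number at least $c_0 n$ contains, for any prescribed vertex $v$ and any $n$ out-neighbors and $n$ in-neighbors of $v$, a system of $2n$ openly disjoint dipaths linking $v$ to $n$ distinct vertices in each direction''; such a lemma, combined with the budgeting described above, would yield the conjectured $O(n^2)$ bound. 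Proving this auxiliary lemma seems to require combining Theorem~\ref{critical strongly connected} with an inductive use of the Kempe-switching mechanism and careful control of how dichromatic number behaves under vertex deletion, and it is here that we expect the main technical difficulty to lie.
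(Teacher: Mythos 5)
The statement you are trying to prove is Conjecture~\ref{con:quadratic}, which the paper leaves \emph{open}: the authors explicitly say that ``the techniques used in this paper to tackle sparse digraphs do not seem to allow for substantial improvements of the upper bound,'' and the best bound they achieve is $\text{mader}_{\vec{\chi}}(\bivec{K}_n)\le 4^{n^2-3n+1}(n-1)+1$, obtained by iterating Theorem~\ref{addingarcs} starting from a Mader-perfect digraph on $n$ vertices with $2n-1$ arcs. So there is no proof in the paper to compare against, and your proposal, as written, does not close that gap.

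Beyond the orientation-correct-linkage difficulty you yourself flag, there is a second, arguably more basic obstruction in the accounting. You argue that deleting internal vertices of previously built subdivision paths costs ``at most a constant from the dichromatic number'' per path, and that ``after removing the $O(n^2)$ internal vertices used so far'' the dichromatic number has dropped by only $O(n^2)$. But Menger's theorem (Theorem~\ref{setmenger}) gives no control whatsoever on the \emph{lengths} of the disjoint dipaths it produces; a single $A$--$B$-dipath in a strongly $k$-connected digraph can traverse a constant fraction of $V(D')$. Deleting a vertex drops $\vec{\chi}$ by at most one, so the cost is linear in the \emph{number of internal vertices deleted}, not in the number of paths, and nothing in your setup bounds the former by $O(n^2)$. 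In the undirected Bollob\'as--Thomason argument this is circumvented by a careful extraction of a highly connected, bounded-size ``core'' in which the linkage can be done with short paths; the digraph analogue of that core extraction is precisely what is missing here, and Thomassen's result (that large minimum out- and in-degree does not even force a $\bivec{K}_3$-subdivision) shows that any such extraction must lean on dichromatic number rather than degree, which your sketch invokes only heuristically. Finally, the ``auxiliary lemma'' you propose is left entirely unproven, and it is not clear that it is even true at the stated strength, since it would already imply a linear-in-$n$ directed linkage result of a kind that, to the authors' knowledge (and the present state of the literature), is not available. The proposal is a reasonable outline of where the difficulty lies, but it does not constitute a proof, and the conjecture remains open.
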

It is worth noting that by a result of Gir\~{a}o et al.~\cite{GPS}, every tournament $T$ with minimum out-degree at least $cn^2$ (for some absolute constant $c$) contains a subdivision of $\bivec{K}_n$. 
This implies that {\em in tournaments}, having dichromatic number larger than $cn^2$ forces a subdivision of $\bivec{K}_n$. 
As mentioned in the introduction, however, extending the result of \cite{GPS} to general digraphs is impossible, since having large minimum out- and in-degree does not force $\bivec{K}_n$-subdivisions in general digraphs for any $n \geq 3$.  


Another intriguing question is to determine the Mader number of bioriented cycles. Specifically, is it the case that $\text{mader}_{\vec{\chi}}(\bivec{C_{\ell}}) = \ell$ for all $\ell \geq 4$?
\begin{problem}
	What is $\text{mader}_{\vec{\chi}}(\bivec{C_{\ell}})$?
\end{problem}

A related problem is to determine the maximum possible chromatic number of a digraph which does not contain a subdivision of {\em any} bioriented cycle. We conjecture that the answer is \nolinebreak $2$.
\begin{conjecture}\label{conj:bicycles}
	Let $D$ be a digraph with $\vec{\chi}(D) \geq 3$. Then there is $\ell \geq 3$ such that $D$ contains a subdivision of $\bivec{C_{\ell}}$. 
\end{conjecture}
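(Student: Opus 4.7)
I would proceed by contradiction, letting $D$ be a $3$-dicritical digraph that contains no subdivision of $\bivec{C_\ell}$ for any $\ell\ge 3$; such a $D$ exists if the conjecture fails. By Lemmas~\ref{mindegree} and~\ref{strong connectivity} we have $\delta^+(D),\delta^-(D)\ge 2$ and $D$ is strongly connected. Applying Theorem~\ref{critical strongly connected} with $k=1$, I would pick a vertex $v\in V(D)$ such that $D-v$ is still strongly connected. Since $\vec{\chi}(D)=3$, any acyclic $2$-coloring $c\colon V(D)\setminus\{v\}\to\{1,2\}$ fails to extend to $D$, so for each $i\in\{1,2\}$ there is a monochromatic directed cycle $C_i$ of color $i$ passing through $v$; necessarily $V(C_1)\cap V(C_2)=\{v\}$. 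This is the same ``bowtie'' of two dicycles used in the proof of Proposition~\ref{K_3-e}.

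The key step is to exhibit a third directed cycle $C_3\subseteq D-v$ with $V(C_3)\cap V(C_1)=\{q\}$ and $V(C_3)\cap V(C_2)=\{r\}$ for some $q\ne r$, both distinct from $v$; the three dicycles $C_1,C_2,C_3$ then form a subdivision of $\bivec{C_3}$ with branch vertices $v,q,r$, contradicting our assumption on $D$. To produce $C_3$, I would take a shortest directed path $P$ in $D-v$ from $V(C_1)\setminus\{v\}$ to $V(C_2)\setminus\{v\}$; such a $P$ exists by strong connectivity of $D-v$, and by shortness its endpoints $q\in V(C_1)$ and $r\in V(C_2)$ satisfy $V(P)\cap V(C_1)=\{q\}$ and $V(P)\cap V(C_2)=\{r\}$. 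It then remains to find a directed path $Q$ in $D-v$ from $r$ to $q$ that is internally vertex-disjoint from $P$ and avoids $V(C_1)\setminus\{q\}$ and $V(C_2)\setminus\{r\}$; setting $C_3:=P\cup Q$ would finish the argument.

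The main obstacle is exactly the existence of $Q$. If no such $Q$ exists, Menger's theorem yields a small vertex set $K\subseteq(V(P)\setminus\{q,r\})\cup(V(C_1)\setminus\{q\})\cup(V(C_2)\setminus\{r\})$ whose removal from $D-v$ disconnects $r$ from $q$; this translates into a low-order separator of $D$ that cleanly cuts $V(C_1)$ from $V(C_2)$. To overcome this, I would vary the ingredients of the construction: the vertex $v$ (exploiting the flexibility afforded by Theorem~\ref{critical strongly connected}), the acyclic $2$-coloring $c$ of $D-v$ (applying Kempe-type recolorings on two-colored subdigraphs as in the proof of Theorem~\ref{kempe}), and the shortest path $P$, until the bad separator is avoided. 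As a fallback, when $\bivec{C_3}$-subdivisions are rigidly obstructed by a $2$-separator, one can splice several internally disjoint external dipaths between $V(C_1)\setminus\{v\}$ and $V(C_2)\setminus\{v\}$ with subpaths of $C_1$ and $C_2$ to build a subdivision of $\bivec{C_\ell}$ for some $\ell\ge 4$, exploiting the freedom to choose any $\ell\ge 3$. The delicate bookkeeping required, analogous to the iterative good-separation analysis in the proof of $\text{mader}_{\vec{\chi}}(W_4^+)=4$ from Theorem~\ref{thm:dirdirac}, is where I expect the bulk of the work to lie.
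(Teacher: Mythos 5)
This statement is left as an \emph{open conjecture} in the paper (Conjecture~\ref{conj:bicycles}); the authors do not supply a proof, so there is nothing in the paper to compare your argument against.

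Your proposal is also not a complete proof. The setup (reduce to a $3$-dicritical $D$, invoke Lemmas~\ref{mindegree} and~\ref{strong connectivity} to get $\delta^{\pm}(D)\ge 2$ and strong connectivity, apply Theorem~\ref{critical strongly connected} with $k=1$ to find $v$ with $D-v$ strongly connected, and extract monochromatic dicycles $C_1,C_2$ through $v$ together with a shortest connecting path $P$) is exactly the proof of Proposition~\ref{K_3-e}, and it is sound. But that argument yields only a subdivision of $\bivec{K}_3-e$, i.e.\ a ``bowtie'' plus one path; the whole difficulty of the conjecture is the missing return path $Q$ that would close the bioriented triangle, and your proposal does not resolve it. You correctly identify that Menger gives a small separator when $Q$ fails to exist, but the subsequent steps --- ``vary $v$, the coloring $c$, and $P$'' and the fallback ``splice several internally disjoint external dipaths \dots to build a subdivision of $\bivec{C}_\ell$ for some $\ell\ge 4$'' --- are stated without any argument that they terminate or succeed, and this is precisely the substantive content that is missing. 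In short, your outline reproduces the known easy half (Proposition~\ref{K_3-e}) and then gestures at the genuinely hard half without closing it, so it does not constitute a proof of Conjecture~\ref{conj:bicycles}; the conjecture remains open.
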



{\bf Acknowledgement} The research on this project was initiated during a joint research workshop of Tel Aviv University and the Freie Universit\"at Berlin on Ramsey Theory, held in Tel Aviv in March 2020, and partially supported by GIF grant G-1347-304.6/2016. We would like to thank the German-Israeli Foundation (GIF) and both institutions for their support.

\end{document}